\documentclass[14pt, reqno]{amsart}
\usepackage{tikz}
\usepackage{caption}
\usepackage{amsmath}
\usepackage{amssymb}
\usepackage{xcolor} 
\usetikzlibrary{decorations.pathreplacing, fit}
\usepackage[top=1.2in, bottom=1.2in, left=1.2in, right=1.2in]{geometry}
\newtheorem{theorem}{Theorem}[section]
\newtheorem{lemma}[theorem]{Lemma}

\theoremstyle{definition}

\theoremstyle{remark}

\usepackage{hyperref}
 \newtheorem{thm}{Theorem}[section]

 \theoremstyle{definition}
 
 \newtheorem{rem}[thm]{Remark}
 \numberwithin{equation}{section}

 \numberwithin{equation}{section}
\usepackage{amsmath,amssymb}
\usepackage{mathrsfs}
\def\leq{\leqslant}
\def\geq{\geqslant}
\allowdisplaybreaks[3]

\begin{document}
\title[Global existence of nonlinear Klein-Gordon equations in 3D]{Global Existence of classical solutions to 3D nonlinear Klein-Gordon equations with  low-regularity initial data}

\author{Wei Xu}
\address{School of Mathematics and Information Sciences, Nanchang Hangkong University, Nanchang, China}
\email{70709@nchu.edu.cn(W. Xu)}

\author{Yi Zhou}
\address{School of Mathematics Science, Fudan University, Shanghai 200433, China}
\email{yizhou@fudan.edu.cn(Y. Zhou)}


\subjclass[2020]{Primary 35L70, 35A01}

\keywords{Global existence, Nonlinear Klein-Gordon equations, Low-regularity, Littlewood-Paley decomposition}

\begin{abstract}
This paper studies global existence for the Cauchy problem of nonlinear Klein–Gordon equations in three space dimensions, strictly within the regularity regime of classical local existence. For small initial data in $H^{s+1}\times H^s$ with $s>\frac52$, we prove global classical solutions via higher-order and lower-order energy estimates. The proof relies on two key ingredients. The first is Lemma \ref{product_KG}, due to Georgiev–Popivanov, which reduces quadratic nonlinearities to cubic terms plus ghost-energy-controllable terms, securing lower-order estimates. The second is a sharp Klainerman–Sobolev-type inequality without the scaling operator (Lemma \ref{sob_lemma}), established herein, which yields $(1+t)^{-1}$ decay for derivatives up to second order; integration by parts then controls derivative-loss terms in the higher-order estimates.
\end{abstract}

\maketitle

\tableofcontents

\section{\textbf{Introduction}}\label{introduction}
\subsection{Background and motivation}

Points in $\mathbb R^{+}\times\mathbb R^3$ are denoted by $(x^0,x^1,x^2,x^3)=(t,x)$. We write partial derivatives as $\partial_{\alpha}=\frac{\partial}{\partial x^{\alpha}}$, $\alpha=0,1,2,3$, with the abbreviations $\partial=(\partial_0,\partial_1,\partial_2,\partial_3)=(\partial_t,\nabla)$. Let $r=|x|$ and $\partial_r=\frac{x}r\cdot \nabla$.
For the spatial rotations, we set $\Omega=x\wedge \nabla$ and $\widetilde{\Omega}_{ij}=x^i\partial_j-x^j\partial_i$ for $i,j=1,2,3$. The Lorentz operators are defined by $L_i=t\partial_i+x^i\partial_t$ for $i=1,2,3$ and let the scaling operator $S=t\partial_t+r\partial_r$.

In this paper, we are concerned with the following Cauchy problem of nonlinear Klein-Gordon equations when the spatial dimension $d$ is 3
\begin{equation}\label{NLKG_FULL}
\begin{cases}
\Box u+u = F(u,\partial u,\partial^2u), (t,x)\in[0,+\infty)\times\mathbb R^d,\\
u(0,x)=\varphi(x),\ \partial_tu(0,x)=\psi(x),\ x\in\mathbb R^d.
\end{cases}
\end{equation}
The smooth nonlinearity $F(u,\partial u,\partial^2u)$ is at least quadratic and is linear in $\partial^2u$. Since nonlinear terms of order three and higher do not influence the results, they are neglected. Precisely,  
\[
      F(u,\partial u, \partial^2u)=\left(C^{\alpha\beta\gamma}\partial_{\gamma}u+C^{\alpha\beta}u\right)\partial_{\alpha}\partial_{\beta}u+Du^2+D^{\alpha}u\partial_{\alpha}u+D^{\alpha\beta}\partial_{\alpha}u\partial_{\beta}u,
\]
where $C^{\alpha\beta\gamma},C^{\alpha\beta},D,D^{\alpha\beta},D^{\alpha}$ are arbitrary real constants. Here and throughout, $\Box =\partial_t^2-\Delta$ denotes the usual d'Alembertian operator and $\Delta=\sum_{i=1}^3\partial_i^2$ denotes the laplacian operator. Roman indices $i,j,k$ are restricted to \{1,2,3\}, while Greek indices $\alpha,\beta,\gamma$ range over \{0,1,2,3\}, with 0 corresponding to the time variable. The Einstein summation convention is adopted for repeated indices.

A straightforward calculation then yields
\begin{align*}
    \Box u+u=\frac{1}{1-C^{00\gamma}\partial_{\gamma}u-C^{00}u}\Big((&C^{00\gamma}\partial_{\gamma}u+C^{00}u)(\Delta u-u)+(C^{ i\alpha\gamma }\partial_{\gamma}u+C^{i\alpha}u)\partial_{i}\partial_{\alpha} u
    \\&+(C^{0i\gamma}\partial_{\gamma}u+C^{0i }u)\partial_{i}\partial_t u+Du^2+D^{\alpha}u\partial_{\alpha}u+D^{\alpha\beta}\partial_{\alpha}u\partial_{\beta}u\Big). 
\end{align*}
By Taylor's expansion and neglecting cubic nonlinear terms which do not affect our subsequent discussion, we may write
\begin{align*}
F(u,\partial u,\partial^2u)=\left(\widetilde{C}^{\alpha\beta\gamma}\partial_{\gamma}u+\widetilde{C}^{\alpha\beta}u\right)\partial_{\alpha}\partial_{\beta}u+\widetilde{D}u^2+\widetilde{D}^{\alpha}u\partial_{\alpha}u+\widetilde{D}^{\alpha\beta}\partial_{\alpha}u\partial_{\beta}u,
\end{align*}
where $\widetilde{C}^{00\gamma}=\widetilde{C}^{00}=0$. Without loss of generality, we therefore assume
\begin{align}\label{NLG1}
\Box u+u=\left(C^{\alpha i\gamma}\partial_{\gamma}u+C^{\alpha i}u\right)\partial_{\alpha}\partial_{i}u+H(u),
\end{align}
with the semilinear term
\[
H(u)=Du^2+D^{\alpha}u\partial_{\alpha}u+D^{\alpha\beta}\partial_{\alpha}u\partial_{\beta}u
\]
and the constants $C^{i\alpha\gamma},C^{i\alpha}$ satisfying the symmetry conditions
\[
C^{i\alpha\gamma}=C^{\alpha i\gamma},\ C^{i\alpha}=C^{\alpha i}.
\]

Let us define the set of vector fields $Z=(\partial,\Omega,L):=(Z_1,\ldots,Z_{10})$, where $L=(L_1,L_2,L_3)$. Then the commutator $[\partial,Z]$ is zero or in the span of $\partial$, while the commutator of any two elements of $Z$ again belongs to the span of $Z$. Moreover, every element of $Z$ commutes with the d'Alembertian $\Box$. By $Z^a$, $a=(a_1,\cdots,a_{10})$, we may define an ordered product of $|a|=\sum_{i=1}^{10}a_i$ vector fields $Z_{1}^{a_1}\cdots Z_{10}^{a_{10}}$. For any positive integer $k$, any constant $p$ with $1\leq p\leq+\infty$, and any function $f$, let
\[
\|Z^{k}f\|_{L^p}:=\sum_{|a|= k}\|Z^{a}f\|_{L^p},\quad \|Z^{\leq k}f\|_{L^p}:=\sum_{|a|\leq k}\|Z^{a}f\|_{L^p}.
\]

We now recall some basic results concerning the nonlinear Klein-Gordon equation \eqref{NLKG_FULL}.

The Cauchy problem for nonlinear Klein-Gordon equations is a central topic in the theory of nonlinear hyperbolic partial differential equations, with profound applications in quantum field theory, fluid dynamics and geometric analysis.  In the three‑dimensional case (i.e., four‑dimensional spacetime), the global existence of classical solutions under small initial data has been extensively studied over the past four decades. The seminal papers \cite{Klainerman1985, Shatah1985} introduced, respectively, the hyperboloidal (vector field) method and the normal form method to prove global existence of small-amplitude solutions for nonlinear Klein–Gordon equations in three space dimensions. For the small initial data in the suitably weighted Sobolev spaces on the product space $\mathbb R^2\times \mathbb T$, the global existence of solutions to the quasilinear Klein-Gordon equations with quadratic nonlinearities in three space dimensions was established in \cite{LiTaoYin2023}. For further references in this setting $d=3$, see \cite{Delort1998,IonescuPausader2014,LiTaoYin2024,SimonTaflin,HouYin2026}. In particular, \cite{IonescuPausader2014} proved small-data global existence and scattering for quasilinear systems of Klein–Gordon equations with different speeds, which also yielded a robust global stability result for the Euler–Maxwell equations, as an application. The values of $s$ reported in the literature are as follows: $s=13$ in \cite{Klainerman1985}, $s=4+2a_0$ (with the positive constant $a_0$ sufficiently large) in \cite{Shatah1985}, $s=\frac{29}{2}$ in \cite{HouYin2026}, and $s=10^4$ in \cite{IonescuPausader2014}. In contrast, our result gives $s=\frac{5}{2}+\delta_0$, where $\delta_0$ is an arbitrary positive constant. Compared with the existing results, ours is significantly better.

In H\"ormander's analysis \cite{Hormander}, the solutions to inhomogeneous Klein–Gordon equations are shown to exhibit $t^{-d/2}$ time decay, which yields the global existence for $d\geq3$, the almost global lifespan bounds for $d=2$ and the lifespan lower bounds $T_{\varepsilon}\geq C/(\varepsilon^2)$ for $d = 1$. H\"ormander further conjectured that the optimal lifespan bounds should be much stronger, namely global existence in $d = 2$ and almost global existence in $d = 1$. Ozawa, Tsutaya and Tsutsumi \cite{OzawaTsutayaTsutsumi1996}, were the first to prove H\"ormander's conjecture in the case $d=2$ for quadratic semilinear nonlinearities $\mathcal{O}(u^2+(\partial_tu)^2+|\nabla u|^2)$, building upon earlier partial results of \cite{Georgiev,Kosecki}. Notably, Georgiev–Popivanov \cite{Georgiev} established the global existence for semilinear Klein-Gordon equations with quadratic nonlinear terms proportional to $(\partial_tu)^2-(\partial_1u)^2-(\partial_2u)^2+u^2$ in $d=2$. Their proof employed a technique similar to that in Lemma \ref{product_KG}, which is one of the key ingredients in our proof. In a subsequent work \cite{OzawaTsutayaTsutsumi1997}, Ozawa, Tsutaya and Tsutsumi extended the analysis to the quasilinear setting, establishing both a global existence theory and a classical scattering result. For additional references in two space dimensions, see \cite{Delort1998,SimonTaflin,HouYin2026}. The case $d=1$ has been thoroughly investigated in  \cite{Delort1997,Delort2001,Stingo2018,HouTaoYin2025}, where long-time existence results for small initial data are established, sometimes under a null condition. 

For classical solutions of nonlinear Klein–Gordon equations, the required Sobolev regularity index \(s\) plays a critical role. 
The classical theory demands sufficiently high regularity to control the nonlinear terms in energy estimates. In the general setting of small initial data in \(H^{s+1}\times H^s\), local existence of classical solutions is guaranteed when \(s>\frac52\),  for symmetric hyperbolic systems in $d=3$. The threshold \(\frac52\) arises from the embedding requirements. For \(s>\frac52\), Lemma \ref{sob_lemma} yields that the $L^{\infty}$-norms of derivatives up to order 2 decay like $(1+t)^{-1}$  for all $t$.

\subsection{Main results}
This paper establishes the global existence of classical solutions to the Cauchy problem of nonlinear Klein--Gordon equations \eqref{NLG1} in three space dimensions, under the assumption that the initial data are small and belong to \(H^{s+1}\times H^s\) with \(s>\frac52\). We define
\begin{align*}
&u_j=P_ju,\ u_{j,k}=R_kP_ju,
\end{align*}
where $P_j$ denotes the Littlewood–Paley projection on $\mathbb R^3$ and $R_k$ denotes the corresponding projection on the sphere $\mathbb S^2$. The definitions of $P_j$ and $R_k$ are given in Section \ref{prelim sec}. For convenience, the symbols used throughout this paper are summarized in the following Table.
\begin{center}
    \begin{tabular}{| c | c |}
    \hline
    \textbf{Definition of Symbols}   \\[0.2ex] \hline
    $M_{low}(u(t))=M_{low1}(u(t))+M_{low2}(u(t))+M_{low3}(u(t))+M_{low4}(u(t))$ \\[0.2ex] \hline
    $M_{high}(u(t))=M_{high1}(u(t))+M_{high2}(u(t))+M_{high3}(u(t))+M_{high4}(u(t))+M_{high5}(u(t))$\\[0.2ex] \hline
    $M_{low1}(u(t))=\sum_{j=0}^{+\infty}(2^j)^{5+56\delta}\|\partial^{\leq1}u_j\|_{L^{2}(\mathbb R^3)}^2$\\[0.2ex] \hline
    $M_{low2}(u(t))=\sum_{j=0}^{+\infty}\sum_{k=j+20}^{+\infty}\Big((2^j)^{3+42\delta}(2^{k})^{2+2\delta}+(2^j)^{5+42\delta}(2^{k})^{10\delta}\Big)\|\partial^{\leq1}u_{j,k}\|_{L^{2}(\mathbb R^3)}^2$\\[0.2ex] \hline
    $M_{low3}(u(t))=\sum_{j=0}^{+\infty}(2^j)^{3+46\delta}\|\partial^{\leq1}(Zu)_j\|_{L^{2}(\mathbb R^3)}^2$\\[0.2ex] \hline
    $M_{low4}(u(t))=\sum_{j=0}^{+\infty}\sum_{k=j+20}^{+\infty}(2^j)^{3+30\delta}(2^{k})^{12\delta}\|\partial^{\leq1}(Zu)_{j,k}\|_{L^{2}(\mathbb R^3)}^2$\\[0.2ex] \hline
    $M_{high1}(u(t))=\sum_{j=0}^{+\infty}(2^j)^{5+64\delta}\|\partial^{\leq1}u_j\|_{L^{2}(\mathbb R^3)}^2$\\[0.2ex] \hline
    $M_{high2}(u(t))=\sum_{j=0}^{+\infty}\sum_{k=j+20}^{+\infty}(2^j)^{3+50\delta}(2^k)^{2+2\delta}\|\partial^{\leq1}u_{j,k}\|_{L^{2}(\mathbb R^3)}^2$\\[0.2ex] \hline
    $M_{high3}(u(t))=\sum_{j=0}^{+\infty}(2^j)^{3+54\delta}\|\partial^{\leq1}(Zu)_{j}\|_{L^{2}(\mathbb R^3)}^2$\\[0.2ex] \hline
    $M_{high4}(u(t))=\sum_{j=0}^{+\infty}\sum_{k=j+20}^{+\infty}(2^j)^{1+40\delta}(2^k)^{2+2\delta}\|\partial^{\leq1}(Zu)_{j,k}\|_{L^{2}(\mathbb R^3)}^2$\\[0.2ex] \hline
    $M_{high5}(u(t))=\sum_{j=0}^{+\infty}(2^j)^{1+42\delta}\|\partial^{\leq1}(Z^2u)_j\|_{L^{2}(\mathbb R^3)}^2$\\[0.2ex] \hline
    \end{tabular}
    \captionof{table}{List of symbols used}\label{Table1}
\end{center}

We now state precisely the main result of this paper.
\begin{theorem}\label{main_theorem}
Let $\delta$ be any constant in $(0,0.001]$, and let $M_0$ be any positive constant. 
Assume that the initial data, compactly supported in the ball of radius $M_0$, satisfy 
\[
M_{high}(u(0))\leq \widetilde{C}\varepsilon^2,
\]
for some fixed positive constant $\widetilde{C}$, where $M_{high}(u(t))$ is defined in table \ref{Table1}. If $\varepsilon$ in $(0,\delta^6]$ is sufficiently small, depending on $\delta$ and $M_0$, then system \eqref{NLKG_FULL} admits a unique global solution on $[0,+\infty)\times\mathbb R^3$.
\end{theorem}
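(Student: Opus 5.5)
The proof will be a continuity (bootstrap) argument built on the classical local existence theory for $s>\frac52$. Local existence gives a unique solution on some $[0,T]$ with $M_{high}(u(t))$ continuous in $t$; it also propagates $Zu$ and $Z^2u$ in the stated norms, using compact support of the data and finite speed of propagation so that $|x|\leq t+M_0$ on the support. We make the bootstrap assumption
\[
M_{high}(u(t))\leq 2A\varepsilon^2,\qquad M_{low}(u(t))\leq 2B\varepsilon^2 \quad\text{on }[0,T],
\]
with $A\gg B\gg \widetilde C$. The goal is to improve the two bounds to $A\varepsilon^2$ and $B\varepsilon^2$. The two energies play different roles:
\begin{itemize}
\item $M_{high}$ is allowed to grow at a slow rate $(1+t)^{C\varepsilon}$, or we treat it with a slowly growing weight.
\item $M_{low}$, which has slightly lower frequency weights ($56\delta<64\delta$, and similarly for the others), must stay uniformly bounded.
\end{itemize}
Interpolating between them supplies the small gains of $2^{-c\delta j}$ needed to sum dyadic pieces.

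\textbf{Step 1: decay.} From $M_{low}$ we deduce pointwise decay using the Klainerman–Sobolev inequality without scaling, Lemma \ref{sob_lemma}. This gives $\|\partial^{\leq 2}u(t)\|_{L^\infty}\lesssim \varepsilon(1+t)^{-1}$. The frequency–angular weights in $M_{low2},M_{low4}$ are exactly what make the dyadic sum converge at regularity $\frac52+$. The angular pieces $u_{j,k}$ with $k\geq j+20$ are controlled through $\Omega$ via $2^k$ weights, and the low-angular part through the Lorentz fields $L$.

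\textbf{Step 2: high-order estimates.} Apply $P_j$, $P_jZ$ and $P_jZ^2$ (and $R_k$) to \eqref{NLG1} and perform the standard energy estimate. For the quasilinear term $(C^{\alpha i\gamma}\partial_\gamma u+C^{\alpha i}u)\partial_\alpha\partial_i u$, the top-order part at which derivatives would be lost is handled by integrating by parts in $x_i$, using the symmetry $C^{i\alpha\gamma}=C^{\alpha i\gamma}$. This produces a modified energy equivalent to the original one, plus error terms that carry $\partial^2 u$ in $L^\infty$. The commutators $[P_j,\,g\,\partial^2]$ are treated by a paraproduct decomposition. Every error then has the form
\[
\|\partial^{\leq2}u\|_{L^\infty}\cdot(\text{energy})\lesssim \varepsilon(1+t)^{-1}M_{high},
\]
and Gronwall gives $M_{high}(t)\lesssim \varepsilon^2(1+t)^{C\varepsilon}$. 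To close without growth we instead interpolate: the low-frequency parts are bounded by $M_{low}$, and high frequencies are bounded by $M_{high}$ times $2^{-8\delta j}$. Alternatively we accept the growth and close the argument using $\varepsilon\leq\delta^6$, so that $(1+t)^{C\varepsilon}$ is absorbed by the $2^{c\delta j}$ weight gaps.

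\textbf{Step 3: low-order estimates (main obstacle).} Here $(1+t)^{-1}$ decay is not integrable, so quadratic terms cannot be closed by Gronwall. We therefore apply Lemma \ref{product_KG} (Georgiev–Popivanov) to each quadratic interaction $P_j(\partial^a u\,\partial^b u)$. It rewrites the interaction, modulo a time derivative of a bilinear correction (normal-form type), as cubic terms plus terms controllable by a ghost-weight energy. The cubic terms decay like $\varepsilon^3(1+t)^{-2}$ times energy and are integrable. The ghost-weight terms are absorbed by a spacetime integral of the type $\int\!\int \frac{|(\partial_t+\partial_r)u|^2+|u|^2}{\langle t-r\rangle^{1+\delta}}$; within the support, the Klein–Gordon structure gives $|u|\lesssim \langle t\rangle^{-1}|Lu|$-type gains.

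The difficulty lies in doing this frequency-localized and angularly localized with $\partial^2u$ present. The correction terms must be bounded by $M_{low}$ at a cost of only $2^{C\delta j}$, and the weights ($42\delta$ versus $50\delta$, $10\delta$ versus $2+2\delta$) are tuned precisely so that high–low interactions borrow from $M_{high}$. I would verify the summations case by case: high–high, high–low and low–high, each with $k$ compared against $j$.

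\textbf{Conclusion.} Combining Steps 2 and 3 gives $M_{low}\leq \widetilde C\varepsilon^2+C\varepsilon^3\leq B\varepsilon^2$ and $M_{high}\leq A\varepsilon^2$ for $\varepsilon$ small. By continuity, $T=\infty$, and uniqueness follows from the local theory.
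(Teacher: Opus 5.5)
Your Step 3 has a genuine gap, and it sits exactly where the paper's main device is. You assert that after Lemma \ref{product_KG} the cubic terms are bounded by $\varepsilon^3(1+t)^{-2}$ times energy, and that the correction terms cost only $2^{C\delta j}$ relative to $M_{low}$. At regularity $s>\frac52$ this is false. Identity \eqref{KG_pro3} puts two extra derivatives on a quasilinear product. One therefore meets terms such as $P_j\big(\partial u\,\partial^2(\partial u\,\partial\nabla u)\big)\partial_tu_j$, with up to four derivatives on a single factor, while $M_{low1}$ only controls $u$ in $H^{\frac72+28\delta}$. The boundary terms produced by integrating by parts in time have the same defect. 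The best one can get is $\frac{d}{dt}N_{1,j}\lesssim\varepsilon^4\delta^{-6}(1+t)^{-2}(2^j)^{2}$ plus total time derivatives, and $(2^j)^4$ for $N_{2,j}$. This is integrable in $t$ but not summable against the weight $(2^j)^{5+56\delta}$.

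The paper closes this with a \textbf{nonlinear interpolation in time} that your proposal lacks:
\begin{itemize}
\item multiply the ghost-energy inequality by $\frac1\delta N_{1,j}^{1/\delta-1}$;
\item bound $N_{1,j}^{1/\delta-1}$ using the \emph{growing} high-order bound $N_{1,j}\lesssim\varepsilon^2(2^j)^{-5-64\delta}(1+t)^{\delta^2}$;
\item integrate in $t$, handling the boundary terms by a further integration by parts in $t$ as in \eqref{add7};
\item take the $\delta$-th root.
\end{itemize}
Schematically this gives $N_{1,j}(T)\lesssim N_{1,j}(0)+\varepsilon^{2+2\delta}(2^j)^{-5-57\delta+64\delta^2}$, i.e.\ $N\lesssim H^{1-\delta}E^{\delta}$, which trades the $(1+t)^{-2}$ decay of the error for frequency decay. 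This is precisely why the weights have gaps: $64\delta>56\delta+7\delta$, $54\delta>46\delta+7\delta$, and $50\delta>42\delta+2\delta+5.5\delta$. For $M_{low2}$ and $M_{low4}$ the device must also be combined with interpolation between $M_{high2}$ and $M_{high4}$, via $\|\Omega R_kf\|_{L^2}\sim2^k\|R_kf\|_{L^2}$, to get enough decay in $2^k$.

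Relatedly, your bootstrap is inconsistent. The top-order estimate is only of the form $M_{high}(T)\lesssim\varepsilon^2+\varepsilon\delta^{-3}\int_0^T(1+t)^{-1}M_{high}\,dt$. Nothing in your Step 2 removes the non-integrable $(1+t)^{-1}$, so $M_{high}\le2A\varepsilon^2$ cannot be recovered. The paper bootstraps only $M_{low}$ and accepts $M_{high}\lesssim\varepsilon^2(1+t)^{\delta^2}$, using $\varepsilon\le\delta^6$. That growth is absorbed by the $(1+t)^{-2}$ decay inside the $1/\delta$-power argument, where it becomes $(1+t)^{\delta-\delta^2}$. It is not absorbed by the $2^{c\delta j}$ weight gaps; those absorb the frequency loss.

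Three further steps are glossed over:
\begin{itemize}
\item In Step 1, Lemma \ref{sob_lemma} contains $\|\partial_tP_0f\|_{\dot H^{-1}}$, which $M_{low}$ does not control for $f\in\{u,\partial_tu,\partial_t^2u\}$. The paper needs a separate ghost-energy estimate for $|\nabla|^{-1}P_0u$ (Section \ref{sec_infi}).
\item In Step 2, the angularly localized energies $M_{high2}$ and $M_{high4}$ lose a derivative through $[R_k,\partial_\gamma u]\partial^2u$, which an $x$-paraproduct does not fix. The paper uses Lemma \ref{diff1} together with the $\xi\leftrightarrow\eta$ symmetrization and exact cancellations.
\item Also in Step 2, terms such as $\partial Z^2u\,\partial^2u$ and $\partial Zu\,\partial^2Zu$ are not of the form $\|\partial^{\le2}u\|_{L^\infty}\times$ energy. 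They require \eqref{space_div} and Lemmas \ref{radial_decay} and \ref{product_radial}.
\end{itemize}
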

\begin{rem}
First, the solution $u(t,\cdot)$ of system \eqref{NLKG_FULL} also owns compact support in the ball of radius $t+M_0$ for any positive time $t$. Second, we establish global existence strictly within the regularity regime required for classical local existence. 
\end{rem}

\begin{rem}
This result can be applied to construct global perturbed solutions to the irrotational Euler–Poisson system in three space dimensions, as in \cite{GermainMasmoudiPausader2013,Guo1998}; for the two-dimensional counterpart, see \cite{LiWu2014,IonescuPausader2013}. Moreover, motivated by \cite{Zheng}, our approach can also be applied to the irrotational gravity water wave equations in three space dimensions.
\end{rem}

We now briefly outline the proof of Theorem \ref{main_theorem}. Throughout this paper, we impose the following bootstrap assumption:
\begin{align}\label{bootstrap1}
M_{low}(u(t)) \leq 2C_0\varepsilon^2,
\end{align}
where $C_0$ is a positive constant such that inequalities \eqref{C_0_1},\eqref{C_0_2},\eqref{C_0_3},\eqref{C_0_4},\eqref{C_0_5} hold. Morally speaking, the key idea is based on a bootstrap argument and the main steps are as follows:
\begin{itemize}
  \item \textbf{step1}: Assuming \eqref{bootstrap1}, we establish the higher-order energy estimate 
  \begin{align}\label{bootstrap2}
  M_{high}(u(t)) \leq &C\varepsilon^2(1+t)^{\delta^2}.
  \end{align}
  The proof is given in Section \ref{high_sec}.
  \item \textbf{step2}: We show that the lower-order energy estimate \eqref{bootstrap1} remains valid with the constant $2C_0$ replaced by $C_0$ in Section \ref{sec_low}.
\end{itemize}
By the continuity argument, 
\[
M_{low}(u(t)) \leq C_0\varepsilon^2.
\]
Hence we can extend the local solution of system \eqref{NLKG_FULL} to a global one by the local existence theory.

Two key ingredients underpin the argument.
\begin{itemize}
\item The first is Lemma \ref{product_KG}, due to Georgiev–Popivanov \cite{Georgiev}, which allows us to reduce the quadratic nonlinear terms to a sum of cubic terms and favorable terms that are controllable via ghost energy estimates in the course of the lower-order energy estimates.
\item The second is Lemma \ref{sob_lemma}, which provides a sharp version of the Klainerman–Sobolev-type inequality without the scaling operator. This lemma, established via the Littlewood–Paley decompositions on $\mathbb R^3$ and the sphere $\mathbb S^2$, implies that the $L^{\infty}$-norm of $\partial^{\leq2}u$ decays at the rate $(1+t)^{-1}$. Combined with integration by parts, this decay estimate yields favorable bounds for the derivative-loss terms in the higher-order energy  estimates.
\end{itemize}

Substantial effort was devoted to defining the exponents of the weight for the low‑order and high‑order energies.

(1) Multiplying quadratic nonlinear terms by a multiplier produces cubic nonlinear terms, whose integral over the whole space $\mathbb R^3$ is considered. To apply Gronwall's inequality for deriving higher-order energy estimates $M_{high}(u(t))$ as in Section \ref{high_sec}, this integral must be bounded by 
\[
C(1+t)^{-1}(M_{low}(u(t)))^{\frac12}M_{high}(u(t)),
\]
which requires that the weights from the lower-order energy norms dominate those from the higher-order energy norms:
\begin{itemize}
\item  For the terms involving $\left(C^{i\alpha\gamma}\partial_{\gamma}Zu+C^{i\alpha}Zu\right)\partial_{\alpha}\partial_iu$ in estimating $M_{high3}(u(t))$, we choose the weight $(2^j)^{54\delta}$ (from $M_{high3}(u(t))$) to be smaller than $(2^j)^{56\delta}$ (from $M_{low1}(u(t))$), thereby ensuring that the high‑frequency component of  $\partial_{\alpha}\partial_iu$ is acceptable as in \eqref{difficulty1}.
\item For the terms involving $\left(C^{i\alpha\gamma}\partial_{\gamma}Zu+C^{i\alpha}Zu\right)\partial_{\alpha}\partial_i Zu$ in estimating $M_{high5}(u(t))$, we take the weight $(2^j)^{42\delta}$ in $M_{high5}(u(t))$ smaller than $(2^j)^{46\delta}$ in $M_{low3}(u(t))$, so that the high‑frequency part of  $\partial_{\alpha}\partial_iZu$ is acceptable as in \eqref{difficulty2}.
\item In the estimation of $M_{high2}(u(t))$, the following index conditions are required: for $IV_{2,1,1,1}^{\alpha=0}$, the exponent $56\delta$ in $M_{low1}(u(t))$ must be bigger than $50\delta$ in $M_{high2}(u(t))$; meanwhile, in case 1 for $IV_{2,1,2}^{\alpha=0}$, the combined exponent $42\delta+10\delta$ (from $(2^j)^{42\delta}(2^k)^{10\delta}$ in $M_{low2}(u(t))$) is required to be no less than $50\delta+2\delta$ (from $(2^j)^{50\delta}(2^k)^{2\delta}$ in $M_{high2}(u(t))$).
\item  In estimating $M_{high4}(u(t))$, for the terms $V_{3,1}$ and $V_{3,2},V_{2,1,1,1}^{\alpha=0}$, the factors $42\delta$ from $(2^j)^{42\delta}$ in $M_{low2}(u(t))$ and $46\delta$ from $(2^j)^{46\delta}$ in $M_{low3}(u(t))$ must both be larger than $40\delta$ from $(2^j)^{40\delta}$  in $M_{high4}(u(t))$, respectively. Moreover, the combined exponent $30\delta+12\delta$ from $(2^j)^{30\delta}(2^k)^{12\delta}$ in $M_{low4}(u(t))$ must
    be no less than $40\delta+2\delta$ from $(2^j)^{40\delta}(2^k)^{2\delta}$ in $M_{high4}(u(t))$ to ensure that the terms $V_{2,1,2}^{\alpha=0}$ are admissible. 
\end{itemize}

(2) Taking $M_{low1}(u(t))$ as an example, to achieve additional time decay, we invoke Lemma \ref{product_KG}, which introduces extra second‑order derivatives. Moreover, for the estimate of $(N_{1,j}(u(t)))^{1/\delta}$, where $N_{1,j}(u(t))$ is essentially equal to $\|\partial^{\leq1}u_j\|_{L^2}^2$, we multiply both sides of the ghost energy estimate \eqref{low1_per} by the factor
\[
\frac{1}{\delta}(N_{1,j}(u(t)))^{\frac{1}{\delta}-1},
\]
which combining the higher-order energy estimate $M_{high1}(u(t))$ produces 
\[
\left((2^{j})^{-5-64\delta}\right)^{\frac1{\delta}-1}=(2^j)^{-\frac{5}{\delta}-64+5+64\delta}.
\] 
To ensure the acceptability of the $M_{low1}(u(t))$ estimate as in \eqref{low2}, the exponent $64\delta$ from $(2^j)^{64\delta}$ in $M_{high1}(u(t))$ must exceed $7\delta$ plus the exponent $56\delta$ from $(2^j)^{56\delta}$ in $M_{low1}(u(t))$.
This procedure ultimately requires that the higher‑order energy norms dominate the corresponding lower‑order ones, as detailed in Section \ref{sec_low}: For the estimate of $M_{low3}(u(t))$ in \eqref{add11} to be acceptable, the exponent $54\delta$ from $(2^j)^{54\delta}$ in $M_{high3}(u(t))$ must exceed $7\delta+46\delta$, where $46\delta$ is from $(2^j)^{46\delta}$ in $M_{low3}(u(t))$; Likewise, for the estimate of $M_{low2}(u(t))$ in \eqref{cc3} to be valid, the exponent $50\delta$ from $(2^j)^{50\delta}$ in $M_{high2}(u(t))$ must be greater than $5.5\delta+42\delta+2\delta$, with $42\delta$ and $2\delta$ from $(2^j)^{3+42\delta}(2^k)^{2+2\delta}$ in $M_{low2}(u(t))$.

(3) In view of the foregoing discussion and with the choice of $64\delta$ as the exponent of the weight in $M_{high1}(u(t))$, the weight exponents for the remaining energies should be set as follows:
\begin{center}
\begin{tikzpicture}[    box/.style = {draw, rectangle, text width = 1.6cm, text height = 0.35cm, align = center},
 box2/.style = {draw, rectangle, text width = 6cm, text height = 0.35cm, align = center},
    arrow/.style = {->, >=Stealth, thick},
    brace/.style = {decorate, decoration={brace, amplitude=5pt}, thick}
]

\node[box] (decay) at (-6,0) {$64\delta$(high1) };
\node[box] (middle1) at (-3,0) {$56\delta$(low1)};
\node[box] (middle2) at (0,0) {$54\delta$(high3)};
\node[box] (middle3) at (3,0) {$46\delta$(low3)};
\node[box] (middle4) at (6,0) {$42\delta$(high5)};

\node[box] (new) at (-3,-1.2) {$50\delta$(high2)};
\node[box] (decay3) at (3,-1.2) {$40\delta$(high4)};

\node[box2] (growth2) at (-3,-2.4) {$42\delta$ from $(2^j)^{3+42\delta}(2^k)^{2+2\delta}$(low2) };

\draw[->] (decay.east) -- (middle1.west) node[midway, above] {\textcolor{black}{yields}};
\draw[->] (middle1.east) -- (middle2.west) node[midway, above] {\textcolor{black}{yields}};
\draw[->] (middle2.east) -- (middle3.west) node[midway, above] {\textcolor{black}{yields}};
\draw[->] (middle3.east) -- (middle4.west) node[midway, above] {\textcolor{black}{yields}};

\draw[->] (middle1.south) -- (new.north) node[midway, right] {\textcolor{black}{yields}};
\draw[->] (new.south) -- (growth2.north) node[midway, right] {\textcolor{black}{yields}};
\draw[->] (middle3.south) -- (decay3.north) node[midway, right] {\textcolor{black}{yields}};

\end{tikzpicture}
\end{center}
Lower-order energy norms: $M_{low4}(u(t))$ and the component containing the weight $(2^j)^{5+42\delta}(2^k)^{10\delta}$ in $M_{low2}(u(t))$, are derivable from the remaining energy norms. Their presence is indispensable for the proof. 

The major challenges encountered in the proof are the following. 

(1) Terms $[P_j,C^{i\alpha\gamma}\partial_{\gamma}u+C^{i\alpha}u]\partial_i\partial_{\alpha}u$ are introduced in the estimation of $M_{high1}(u(t))$ to address the derivative-loss terms. To treat the high‑frequency component of $\partial_i\partial_{\alpha}u$ in these terms, we appeal to Lemma \ref{para-product2}. An analogous treatment is valid for $M_{high2}(u(t)),M_{high3}(u(t)),M_{high4}(u(t)),M_{high5}(u(t))$. Lemma \ref{diff1} proves crucial for the estimates of $M_{high2}(u(t))$ and $M_{high4}(u(t))$, as it provides the necessary control over derivative‑loss terms, when the Littlewood-Paley operator on the sphere $\mathbb S^2$ is employed. 

(2)  For the terms involving $\left(C^{i\alpha\gamma}\partial_{\gamma}Z^2u+C^{i\alpha}Z^2u\right)\partial_{\alpha}\partial_iu$ in estimating $M_{high5}(u(t))$, we establish a radial-decay estimate on the product (see Lemma \ref{product_radial}) to estimate the high-frequency part of $\partial_{\alpha}\partial_iu$, by shifting $|\nabla|^{\frac12}$ from $\partial_{\alpha}\partial_iu$ to $C^{i\alpha\gamma}\partial_{\gamma}Z^2u+C^{i\alpha}Z^2u$ as in \eqref{difficulty3}.

(3) To obtain the $(1+t)^{-1-\delta_1}$ time decay rate (with $\delta_1>0$) in estimating $M_{low3}(u(t))$, we invoke the identity $(1+t)\partial_if=L_if-x^i\partial_tf+\partial_if$ and Lemma \ref{product_KG}. These yield the terms  $VII_{1,4,1,j_1,j_2}$. Since the summation of highest derivative order in terms $VII_{1,4,1,j_1,j_2}$ is $6$ and Sobolev embedding requires more than $3/2$ derivatives, the inequality $6+\frac32>7$, where the threshold $7$ plus certain multiples of $\delta$ originates from the definition of $M_{high}(u(t))$, implies that these terms become problematic when summed over $j_1,j_2$ in the regime where $2^j$ is the lowest frequency as in \eqref{combin2}. On the other hand, if we only aim for the $(1+t)^{-1+\delta^2}$ decay for terms $VII_{1,4,1,j_1,j_2}$, we may obtain a $(2^{j_1})^{-1-12\delta}$ decay in the frequency variable $2^{j_1}$ as in \eqref{combin1}. Therefore, we apply interpolation to balance the time and frequency decays, as shown in \eqref{cc1}.

(4) For terms $VII_{1,4,3}$ arising in the estimate of $M_{low3}(u(t))$, note that they may carry no spatial derivatives in $w_3,w_4$, and $\partial^{a}(Zu)_{j_1}$ with $|a|\leq2$ cannot produce time decay but can produce radial decay. The key point is to use \eqref{add10} to convert time decay into radial decay, which in turn generates the terms $VII_{1,4,3,3}$ and $VII_{1,4,3,j_1,j_2}$. The treatment of $VII_{1,4,3,j_1,j_2}$ is analogous to that of $VII_{1,4,1,j_1,j_2}$. Moreover, we employ integration by parts to transfer the operator $L$ from 
\[
P_j\left((\partial_t \partial^{\leq1}u_{j_1})(\partial_t \partial^{\leq1}(Zu)_{j_2})-(\nabla \partial^{\leq1}u_{j_1})\cdot (\nabla \partial^{\leq1}(Zu)_{j_2})+\partial^{\leq1}u_{j_1}\partial^{\leq1}(Zu)_{j_2}\right)
\]
to $(\partial_t+\partial_r)(Zu)_j$, which ensures that $VII_{1,4,3,3}$ is admissible.

(5) In the estimation of the lower-order energies $M_{low2}(u(t))$ and $M_{low4}(u(t))$, we employ the inequality $\|R_kf\|_{L^2}\leq C\|f\|_{L^2}$ to eliminate the Littlewood-Paley operator on the sphere, so that the ghost-energy estimates for $M_{low2}(u(t))$ and $M_{low4}(u(t))$ coincide with those for $M_{low1}(u(t))$ and $M_{low3}(u(t))$, respectively. However, with this reduction, for the part of $M_{low2}(u(t))$ involving the weight $(2^j)^{3+42\delta}(2^k)^{2+2\delta}$, a direct application of $M_{high2}(u(t))$ would lead to insufficient decay in the weight $2^k$. To overcome this, we derive 
\[
\|\partial^{\leq1}u_{j,k}\|_{L^{2}}^2\sim (2^j)^{-3-50\delta}(2^k)^{-2-2\delta},
\]
from $M_{high2}(u(t))$ and
\[
\|\partial^{\leq1}u_{j,k}\|_{L^{2}}^2\sim (2^j)^{-1-40\delta}(2^k)^{-4-2\delta},
\]
from $M_{high4}(u(t))$ (Here we employed the estimate $\|\Omega R_kf\|_{L^2}\sim 2^k\|R_kf\|_{L^2}$, for $k\geq1$), and then combine them via interpolation to obtain 
\[
\|\partial^{\leq1}u_{j,k}\|_{L^{2}}^2\sim (2^k)^{-2.1-2\delta}(2^j)^{-2.9-49.5\delta},
\]
thereby rendering the energy estimate for this component admissible. For the part of $M_{low2}(u(t))$ with weight $(2^j)^{5+42\delta}(2^k)^{10\delta}$, an analogous argument using $M_{high1}(u(t))$ and $M_{high2}(u(t))$, together with interpolation, yields the desired estimate. Finally, for $M_{low4}(u(t))$, the same interpolation strategy using $M_{high3}(u(t))$ and $M_{high4}(u(t))$ applies.

The remainder of this paper is organized as follows. In Section \ref{prelim sec}, we present several useful estimates, including the radial-decay and time-decay Sobolev embedding theorems. In Section \ref{sec3}, by a continuity argument, we show that the higher-order energy estimates are bounded by $C\varepsilon^2(1+t)^{\delta^2}$, while the lower-order energy estimates are uniformly bounded by $C\varepsilon^2$, thereby completing the proof of Theorem \ref{main_theorem}. Finally, in Appendix \ref{appendix}, we collect some useful lemmas concerning the Littlewood–Paley decompositions on $\mathbb R^3$ and on the sphere $\mathbb S^2$, which are employed in Section \ref{sec3}.

\section{\textbf{Preliminaries}}\label{prelim sec}
In this paper, we use $C$ to denote a generic positive constant which may vary from line to line (unless otherwise stated).
We now present some useful formulae that will be used in the proof of Theorem \ref{main_theorem}. 
The first is the following Hardy inequality, proved in \cite[Theorem 2.57]{Bahouri}.
\begin{lemma}\label{Hardy}
For any $s$ in $[0,\frac32)$ and any function $f(x)$ with the right-hand side finite, the following Hardy inequality holds
\[
\int_{\mathbb R^3}\frac{|f(x)|^2}{|x|^{2s}}dx\leq C\|f\|_{\dot{H}^{s}(\mathbb R^3)}^2.
\]
Here the homogeneous Sobolev norm is defined by
\begin{align*}
\|v\|_{\dot{H}^s(\mathbb R^3)}=\||\xi|^sF[v]\|_{L^2(\mathbb{R}_{\xi}^3)},
\end{align*}
where $F[v](\xi)=\int_{\mathbb R^3}e^{-i\langle x,\xi\rangle}v(x)dx$.
\end{lemma}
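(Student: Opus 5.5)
The plan is to prove the Hardy inequality of Lemma \ref{Hardy} for $s\in(0,\frac32)$; the case $s=0$ is just Plancherel. By density, first take $f\in\mathcal S(\mathbb R^3)$ (or $C_c^\infty$) and pass to the limit at the end. That limit step uses Fatou's lemma on the left-hand side together with density of Schwartz functions in $\dot H^s$, which holds since $s<\frac32$.

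\textbf{Step 1: duality.} Rewrite the inequality as the boundedness of the multiplication operator $|x|^{-s}$ from $\dot H^s$ to $L^2$. Equivalently, the operator
\[
T=|x|^{-s}|\nabla|^{-s}
\]
should be bounded on $L^2$. The Riesz potential $|\nabla|^{-s}$ has kernel $c_s|x-y|^{s-3}$ for $0<s<3$. So $T$ is the integral operator with kernel
\[
K(x,y)=c_s|x|^{-s}|x-y|^{s-3}.
\]

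\textbf{Step 2: Schur test with power weights.} Look for $a$ such that, with $h(x)=|x|^{-a}$,
\[
\int K(x,y)h(y)\,dy\le Ch(x)\quad\text{and}\quad \int K(x,y)h(x)\,dx\le Ch(y).
\]
Both integrals are homogeneous in $x$ (resp.\ $y$). Scaling therefore reduces each to the finiteness of a single integral over $\mathbb R^3$.

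For the first integral, scaling gives $|x|^{-s}\cdot|x|^{s-a}\int|e-z|^{s-3}|z|^{-a}\,dz$. This is finite provided $a<3$ (local integrability at $0$), $s-3-a<-3$ (decay at infinity, i.e.\ $a>s$), and $s>0$ (integrability at $z=e$).

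For the second integral, $\int|x|^{-s-a}|x-y|^{s-3}\,dx$ scales like $|y|^{-a}$. It is finite provided $s+a<3$ and $a>0$.

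So any $a\in(s,3-s)$ works, and this interval is nonempty exactly when $s<\frac32$. Schur's test then gives $\|T\|_{L^2\to L^2}\le C$, which is the claim.

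\textbf{Main obstacle.} The only delicate point is checking the three local and global integrability conditions above and seeing that they are compatible precisely for $s<\frac32$. This is where the endpoint restriction appears; at $s=\frac32$ the interval $(s,3-s)$ is empty.

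A cheaper alternative for $s\in[0,1]$ avoids Schur's test. Integrate by parts in the radial variable to get the classical $s=1$ inequality
\[
\int |f|^2|x|^{-2}\,dx\le 4\|\nabla f\|_{L^2}^2,
\]
and then interpolate (Stein–Weiss, via complex interpolation of the weighted spaces) with the trivial case $s=0$. Since the full range up to $\frac32$ is needed, I would follow the Schur-test route.
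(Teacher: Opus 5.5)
Your proof is correct. The paper gives no proof of Lemma~\ref{Hardy}; it simply quotes it from Bahouri--Chemin--Danchin, Theorem~2.57, so your Schur-test argument is a genuinely different, self-contained route.

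Your computation checks out:
\begin{itemize}
\item Positivity of the Riesz kernel $c_s|x-y|^{s-3}$ is what makes Schur's test applicable.
\item After scaling, the two integrals are $\int|e-z|^{s-3}|z|^{-a}\,dz$ and $\int|e-z|^{s-3}|z|^{-s-a}\,dz$. Given $s>0$, they are finite for $a\in(s,3)$ and $a\in(0,3-s)$ respectively. Their intersection is $(s,3-s)$, which is nonempty iff $s<\frac32$.
\item The limiting step is fine, because $\dot H^s\hookrightarrow L^{6/(3-2s)}$ gives a.e.\ convergence along a subsequence, so Fatou applies.
\end{itemize}

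The natural alternative, closer to the Littlewood--Paley tools used throughout the paper, is dyadic. Split $\mathbb R^3$ into shells $|x|\sim 2^q$. On such a shell, bound the homogeneous block $\dot\Delta_jf$ by $C2^{3(q+j)/2}\|\dot\Delta_jf\|_{L^2}$ when $q+j\leq 0$ (Bernstein plus the volume of the shell), and by $\|\dot\Delta_jf\|_{L^2}$ when $q+j>0$. Then sum with a discrete Young inequality. The two geometric series converge exactly when $s<\frac32$ (low frequencies) and $s>0$ (high frequencies), so the same range appears for a different reason.

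The trade-off is this. The dyadic route needs only Bernstein's inequality and never uses the explicit kernel of $|\nabla|^{-s}$. Your kernel route instead shows the endpoint directly as the collapse of the interval $(s,3-s)$. It also transfers verbatim to $\mathbb R^d$ with $a\in(s,d-s)$, giving the full range $0\leq s<\frac d2$, and to weighted Stein--Weiss type bounds.
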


The following lemma on the product of solutions to the Klein–Gordon equations is key to proving the main theorem.
\begin{lemma}\label{product_KG}
For any scalar functions $w_1$ and $w_2$, the following identity holds:
\begin{align}\label{KG_pro3}
-3\partial(w_1 w_2)=&\partial[\Box(w_1 w_2)+w_1 w_2]-\partial\left(w_1(\Box  w_2+ w_2)\right)-\partial\left((\Box w_1+w_1) w_2\right)
\nonumber\\&-2\partial(\partial_tw_1\partial_tw_2-\nabla w_1\cdot\nabla w_2+w_1 w_2),
\end{align}
along with the point-wise estimate
\begin{align}\label{KG_pro4}
&|-2\partial(\partial_tw_1\partial_tw_2-\nabla w_1\cdot\nabla w_2+w_1 w_2)|
\nonumber\\ \leq &\frac{C}{\langle t-r\rangle}\left(|\partial w_1\partial Z^{\leq 1} w_2|+|\partial Z^{\leq 1} w_1\partial w_2|+|Zw_1||w_2|+|w_1||Zw_2|\right)
\nonumber\\&+\frac{C}{\langle t-r\rangle}\left(|Sw_2(\Box w_1+w_1)|+|Sw_1(\Box w_2+w_2)|\right),
\end{align}
where $\langle t-r\rangle=\sqrt{1+|t-r|^2}$.
\end{lemma}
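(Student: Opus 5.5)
The identity \eqref{KG_pro3} is a direct computation, so I would dispose of it first. By the Leibniz rule, $\Box(w_1w_2)=w_1\Box w_2+w_2\Box w_1+2(\partial_tw_1\partial_tw_2-\nabla w_1\cdot\nabla w_2)$. Hence
\[
\Box(w_1w_2)+w_1w_2-w_1(\Box w_2+w_2)-(\Box w_1+w_1)w_2=2(\partial_tw_1\partial_tw_2-\nabla w_1\cdot\nabla w_2)-w_1w_2 .
\]
Subtracting $2Q(w_1,w_2)$, with $Q(a,b):=\partial_ta\partial_tb-\nabla a\cdot\nabla b+ab$, leaves $-3w_1w_2$. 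Applying $\partial$ to both sides gives \eqref{KG_pro3}.

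For \eqref{KG_pro4} I would write $\partial_\gamma Q(w_1,w_2)=Q(\partial_\gamma w_1,w_2)+Q(w_1,\partial_\gamma w_2)$ and split into the regions $|t-r|\leq 1$ and $|t-r|\geq1$.

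\emph{The region $|t-r|\leq1$.} Here $\langle t-r\rangle\sim1$, so it suffices to bound $|\partial Q|$ directly. Every term of $\partial Q$ has one of the forms $\partial^2w_1\,\partial w_2$, $\partial w_1\,\partial^2 w_2$, $\partial w_1\, w_2$ or $w_1\,\partial w_2$. Since $\partial\subset Z$, each of these is dominated by $|\partial Z^{\leq1}w_1\,\partial w_2|$, $|\partial w_1\,\partial Z^{\leq 1}w_2|$, $|Zw_1||w_2|$ or $|w_1||Zw_2|$ respectively.

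\emph{The region $|t-r|\geq1$.} Here we must gain the factor $|t-r|^{-1}$. I would prove the bound for a generic $Q(a,b)$ with $a,b$ each equal to $w_i$ or $\partial w_i$, and then specialize. The tools are the standard relations between $\partial$ and the fields $S,L,\Omega$:
\[
(t^2-r^2)\partial_t=tS-x^iL_i,\qquad (t^2-r^2)\partial_i=tL_i-x^iS+x^j\widetilde\Omega_{ij},
\]
together with $t\partial_i=L_i-x^i\partial_t$ and $t\partial_t=S-r\partial_r$.

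Since $S\notin Z$, the aim is to rewrite $(t-r)Q(a,b)$ so that $S$ falls on only one factor and that factor is paired with $\Box+1$. Concretely, I would substitute the first relation for one $\partial_t$ in $\partial_ta\partial_tb$ and the second for one $\partial_i$ in $\nabla a\cdot\nabla b$. The $L$ and $\Omega$ terms, divided by $t+r$, give contributions of the form $|\partial a||Zb|/(t+r)$, which are acceptable since $t+r\geq|t-r|$. The remaining $S$ terms assemble into the combination
\[
\frac{t}{t^2-r^2}\bigl(\partial_t a\,Sb-\ldots\bigr),
\]
which can be recognized as a divergence-type expression. Using $\partial_t^2 b=\Delta b-b+(\Box b+b)$ to eliminate $\partial_t(\partial_t b)$ should produce the undifferentiated term $ab$ (which cancels the mass term in $Q$) plus $Sb\,(\Box b+b)$-type terms. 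For $a=\partial w_i$ or $b=\partial w_i$, the factor $Z^{\leq1}$ appearing on the right of \eqref{KG_pro4} absorbs the extra derivative.

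\emph{Main obstacle.} I expect the hardest step to be making the bookkeeping of these $S$ terms close, so that every occurrence of $S$ either turns into $Z$ derivatives via the relations above or is paired with $\Box w_i+w_i$. In particular, the mass term $w_1w_2$ has to be exactly compensated. If the direct rewriting fails, my fallback is to use the energy-momentum structure instead. Specifically, $Q$ is essentially $x^\alpha$-contracted with the Klein--Gordon stress tensor applied to $(w_1,w_2)$, and $\partial^\alpha T_{\alpha\beta}$ is expressible through $\Box w_i+w_i$. Contracting with $x^\beta$ should then yield the needed $(t-r)^{-1}$ gain in the region $|t-r|\geq1$.
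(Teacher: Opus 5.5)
Your derivation of \eqref{KG_pro3} and your treatment of the region $|t-r|\leq1$ are fine. The argument for $|t-r|\geq1$, however, has a genuine gap. After splitting $\partial_\gamma Q(w_1,w_2)=Q(\partial_\gamma w_1,w_2)+Q(w_1,\partial_\gamma w_2)$, you try to gain $\langle t-r\rangle^{-1}$ on each $Q(a,b)$ separately, and that estimate is false. Take $w_1=\cos t$, $w_2=\sin t$ (or any functions agreeing with them near $x=0$). Then $\Box w_i+w_i=0$ and $Q(w_1,w_2)\equiv0$, which is consistent with the lemma. But $Q(\partial_tw_1,w_2)=-1$ and $Q(w_1,\partial_tw_2)=1$. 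At the point $(t,0)$, every quantity $|\partial w_1\,\partial Z^{\leq1}w_2|$, $|Zw_1||w_2|$, etc.\ is $O(1)$, and the $S$-terms vanish. So a bound for a single piece would force $\langle t\rangle\lesssim1$. Your own substitution shows why. Since $(t^2-r^2)\partial_t=tS-x^iL_i$ and $(t^2-r^2)\partial_i=tL_i-x^iS+x^j\widetilde{\Omega}_{ij}$, one gets $(t-r)Q(a,b)=\frac{Sa\,Sb}{t+r}+(t-r)ab+O(|Za||\partial b|)$. Thus $S$ lands on both factors, and the mass term is amplified rather than cancelled. Because $Q(a,b)$ contains only first derivatives of $b$, there is no $\partial_t^2b$ available to convert into $\Box b+b$. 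The stress-tensor fallback, applied to $Q$ itself, fails for the same reason.

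The missing idea is that the decay must be extracted from the \emph{outer} derivative, with $Q(w_1,w_2)$ kept intact. From \eqref{cc5} and \eqref{lemm2_add}, write $(t-r)\partial_t=\frac{tS-x^iL_i}{t+r}$ and $(t-r)\partial_k=\frac{x^j\widetilde{\Omega}_{kj}-x^kS+tL_k}{t+r}$, and apply these to $Q(w_1,w_2)$. The $L$ and $\Omega$ parts directly give $|\partial w_1||\partial Z^{\leq1}w_2|+|\partial Z^{\leq1}w_1||\partial w_2|+|Zw_1||w_2|+|w_1||Zw_2|$. For the $S$ part, use that $S$ is a derivation and regroup $SQ=\big[\partial_tw_1\,S\partial_tw_2-\nabla w_1\cdot S\nabla w_2+(Sw_1)w_2\big]+\big[\text{same with }w_1\leftrightarrow w_2\big]$. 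In the first bracket, expand $S\partial_\alpha w_2=t\partial_t\partial_\alpha w_2+x^j\partial_j\partial_\alpha w_2$ and use $x^j\partial_t=L_j-t\partial_j$, $t\partial_i=L_i-x^i\partial_t$, $x^j\partial_i=x^i\partial_j-\widetilde{\Omega}_{ij}$. The second derivatives of $w_2$ then collect into $(t\partial_tw_1+x^i\partial_iw_1)(\partial_t^2-\Delta)w_2=Sw_1\,\Box w_2$. The term $(Sw_1)w_2$ completes this to $Sw_1(\Box w_2+w_2)$, and the remainder is $O(|\partial w_1||\partial Z^{\leq1}w_2|)$. This is exactly where second derivatives, and hence $\Box+1$, enter. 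It requires $S$ to act on the whole product, which your splitting destroys.
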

\begin{proof}
For the proof, we refer to \cite{Georgiev}; for completeness, we include the details here. A direct computation yields
\[
\partial_{\alpha}^2(w_1w_2)=w_1\partial_{\alpha}^2w_2+w_2\partial_{\alpha}^2w_1+2\partial_{\alpha}w_1\partial_{\alpha}w_2,\ \alpha=0,1,2,3.
\]
Note that here, repeated index does not denote summation. The identity $\Box(w_1w_2)=\partial_t^2(w_1w_2)-\sum_{i=1}^3\partial_i^2(w_1w_2)$ gives identity \eqref{KG_pro3}. We now focus on the last term on the right-hand side of \eqref{KG_pro3}.
\begin{itemize}
\item When $\partial$ is taken to be $\partial_t$, the identity
\begin{align}\label{cc5}
x^iL_i=t(S-t\partial_t)+r^2\partial_t=tS-(t^2-r^2)\partial_t,
\end{align}
yields
\[
(t-r)\partial_t=\frac{tS-x^iL_i}{t+r},
\]
which, together with $\langle t-r\rangle=\sqrt{1+|t-r|^2}$, implies the estimate 
\begin{align}
\label{xw4}&\langle t-r\rangle|2\partial_t(\partial_tw_1\partial_tw_2-\partial_iw_1\partial_iw_2+w_1 w_2)|
\nonumber\\ \leq& C|\partial_tw_1S\partial_tw_2-\partial_iw_1S\partial_iw_2+w_1S w_2+S\partial_tw_1\partial_tw_2-S\partial_iw_1\partial_iw_2+Sw_1 w_2|
\nonumber\\&+C\left(|\partial w_1Z\partial w_2|+|Z\partial w_1\partial w_2|+|w_1||Zw_2|+|Zw_1||w_2|\right).
\end{align}
\item When $\partial$  is taken to be $\partial_k$, we need to consider the following terms
\[
|2\partial_k(\partial_tw_1\partial_tw_2-\partial_iw_1\partial_iw_2+w_1w_2)|.
\]
Noticing that
\begin{align}\label{lemm2_add}
x^j\widetilde{\Omega}_{kj}=x^k(S-t\partial_t)-r^2\partial_k=x^kS-tL_k+(t^2-r^2)\partial_k,
\end{align}
they satisfy the same bound as in inequality \eqref{xw4}.
\end{itemize} 
Direct calculation gives
\begin{align*}
&\partial_tw_1S\partial_tw_2-\partial_iw_1S\partial_iw_2+(Sw_1)w_2
\nonumber\\=&[(t\partial_tw_1)\partial_t^2w_2+(\partial_tw_1)x^j\partial_t\partial_jw_2]-[(\partial_iw_1)t\partial_i\partial_tw_2+(\partial_iw_1)x^j\partial_i\partial_jw_2]+(Sw_1)w_2
\nonumber\\=&[(t\partial_tw_1)\partial_t^2w_2+(\partial_tw_1)(L_j-t\partial_j)\partial_jw_2]
\nonumber\\&-[(\partial_iw_1)(L_i-x^i\partial_t)\partial_tw_2-(\partial_iw_1)(\widetilde{\Omega}_{ij}-x^i\partial_j)\partial_jw_2]+(Sw_1)w_2,
\end{align*}
which implies that 
\begin{align}
\label{kg_pro1}&|\partial_tw_1S\partial_tw_2-\partial_iw_1S\partial_iw_2+(Sw_1)w_2|\leq C|Sw_1(\Box w_2+w_2)|+C|\partial Z^{\leq 1}w_2||\partial w_1|,
\nonumber\\&|S\partial_tw_1\partial_tw_2-S\partial_iw_1\partial_iw_2+w_1(Sw_2)|
\leq C|Sw_2(\Box w_1+w_1)|+C|\partial Z^{\leq 1}w_1||\partial w_2|.
\end{align}
This implies that \eqref{KG_pro4} holds and completes the proof.
\end{proof}

The following Hardy-type inequality will be used in the process of $L^{\infty}$ estimates in Section \ref{sec_infi}.
\begin{lemma}\label{Hardy2}
For any $R\geq0$ and $0\leq s<1/2$, we have
\begin{align}
\left\|\frac{v(x)}{||x|-R|^s}\right\|_{L^2(\mathbb R^n)}\leq C\|v\|_{\dot{H}^s(\mathbb R^n)},
\end{align}
where $C$ is a positive constant independent of $R$ and $v$.
\end{lemma}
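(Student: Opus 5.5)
The plan is to reduce the estimate to a one-dimensional Hardy inequality in the radial variable and then interpolate. By density, assume $v\in C_c^\infty(\mathbb R^n)$. For $s=0$ the claim is trivial. So the real content is the endpoint-type bound
\[
\int_{\mathbb R^n}\frac{|v|^2}{||x|-R|^{2s}}dx\leq C\|v\|_{\dot H^s}^2,\qquad 0<s<\tfrac12,
\]
with $C$ uniform in $R$.

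The first attempt is a frequency-by-frequency argument. Decompose $v=\sum_j P_jv$ with Littlewood–Paley pieces. For a single dyadic piece $f=P_jv$, split $\mathbb R^n$ into the shell $A=\{||x|-R|\leq 2^{-j}\}$ and its complement.

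On the complement, $||x|-R|^{-2s}\leq 2^{2js}$. The contribution is therefore at most $2^{2js}\|f\|_{L^2}^2\sim\|f\|_{\dot H^s}^2$.

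On the shell, we use that $f$ is frequency localized, so $\|f\|_{L^\infty_r L^2_\omega}$-type traces are controlled. Concretely, in polar coordinates $\int_A |f|^2||x|-R|^{-2s}dx=\int_{|\rho-R|\le 2^{-j}}|\rho-R|^{-2s}\,\|f(\rho\,\cdot)\|_{L^2(\rho\mathbb S^{n-1})}^2 d\rho$. The one-dimensional weight $|\rho-R|^{-2s}$ is integrable because $2s<1$, and its integral over the shell is $C2^{-j(1-2s)}$. It then suffices to have the trace bound $\sup_\rho\|f\|_{L^2(\{|x|=\rho\})}^2\leq C2^{j}\|f\|_{L^2}^2$. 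This is the standard trace/Bernstein estimate for frequency-localized functions on spheres, proved by restricting to a unit-width shell and using $\|\partial_r f\|_{L^2}\lesssim 2^j\|f\|_{L^2}$ together with the one-dimensional inequality $\sup|g|^2\le \|g\|\|g'\|+\|g\|^2/\text{width}$ on a shell of width $2^{-j}$. Combining these gives the bound $C2^{2js}\|f\|_{L^2}^2$ for the single piece.

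The main obstacle is summing over $j$. We must keep the constant independent of $R$ and avoid losing the sum over $j$ by the triangle inequality, since cross terms appear. To handle this, we pass instead through real interpolation. The two endpoint maps are
\begin{itemize}
\item $v\mapsto v$ from $L^2$ to $L^2$;
\item $v\mapsto v\,||x|-R|^{-1/2}\mathbf 1_{\cdot}$, which only holds in the weak/Besov form: the dyadic argument above gives $\|\,||x|-R|^{-\theta}v\|_{L^2}\lesssim \|v\|_{\dot B^{\theta}_{2,1}}$ for every $\theta<\tfrac12$, uniformly in $R$.
\end{itemize}

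Fix $0<s<\tfrac12$ and choose $s_0<s<s_1<\tfrac12$. Consider the weighted $L^2$ spaces $L^2(w^{-2s_0})$ and $L^2(w^{-2s_1})$, with $w=||x|-R|$. By Stein–Weiss interpolation their complex interpolation space is $L^2(w^{-2s})$, while $(\dot B^{s_0}_{2,1},\dot B^{s_1}_{2,1})$ interpolates to $\dot H^s$. All constants are uniform in $R$ because each endpoint bound is. This yields the lemma for every $s\in[0,\tfrac12)$ with $C$ independent of $R$ and $v$.
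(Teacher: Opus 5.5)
Your argument is correct in substance, and it takes a genuinely different route from the paper. The paper gives no proof of this lemma; it simply defers to D'Ancona (Theorem 4.4 and Lemma 4.3(1) there).

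Your proof is self-contained. The single-block estimate $\|\,||x|-R|^{-\theta}P_jv\|_{L^2}\le C_\theta 2^{j\theta}\|P_jv\|_{L^2}$ rests only on the trace bound $\|f\|^2_{L^2(\{|x|=\rho\})}\le 2\|f\|_{L^2}\|\nabla f\|_{L^2}$, uniform in $\rho$. This is the same inequality the paper uses in the proofs of Lemmas \ref{radial_decay} and \ref{product_radial}. The cleanest derivation is $\rho^{n-1}|f(\rho\omega)|^2\le 2\int_\rho^\infty |f||\partial_r f|\,r^{n-1}dr$, integrated over $\omega$. Your shell-of-width-$2^{-j}$ sketch also works, but only if you take the one-sided interval $[\rho,\rho+2^{-j}]$ so that $\rho^{n-1}\le r^{n-1}$; otherwise small $\rho$ causes trouble. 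Summing the blocks gives a $\dot B^{\theta}_{2,1}$ bound uniform in $R$ for every $\theta<\tfrac12$, and interpolation then upgrades the $\ell^1$ summation to $\ell^2$. What your route buys is an explicit mechanism and explicit $s$-dependence of the constant, of order $(1-2s)^{-1/2}$. The citation buys brevity.

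One step must be fixed, because as literally written the interpolation does not yield the lemma. You announce real interpolation but then invoke the complex Stein--Weiss space. Under the complex method, $[\dot B^{s_0}_{2,1},\dot B^{s_1}_{2,1}]_\vartheta=\dot B^{s}_{2,1}$, which is exactly what the dyadic argument already gave, not $\dot H^s$.

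Use the real method $(\cdot,\cdot)_{\vartheta,2}$ on both couples, with $s=(1-\vartheta)s_0+\vartheta s_1$:
\begin{itemize}
\item On the Besov side, $(\dot B^{s_0}_{2,1},\dot B^{s_1}_{2,1})_{\vartheta,2}=\dot B^{s}_{2,2}=\dot H^s$. This holds because $s_0\neq s_1$, so the real method forgets the summability index; this is precisely where $\ell^1$ becomes $\ell^2$.
\item On the weighted side, the real-method version of the Stein--Weiss theorem gives $(L^2(w^{-2s_0}),L^2(w^{-2s_1}))_{\vartheta,2}=L^2(w^{-2s})$.
\end{itemize}

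Uniformity in $R$ also needs a word. The $K$-functional of the weighted couple is comparable, with absolute constants, to $\bigl(\int \min(w^{-2s_0},t^2w^{-2s_1})|v|^2dx\bigr)^{1/2}$. Integrating $t^{-2\vartheta}K^2\,dt/t$ produces $c_\vartheta\int w^{-2s}|v|^2dx$, so the norm equivalence does not depend on the weight, hence not on $R$.

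Finally, drop your first bullet list. The $L^2\to L^2$ endpoint is never used. The $\theta=\tfrac12$ endpoint fails in every form, Besov included, since for $R>0$ the weight $||x|-R|^{-1}$ is not locally integrable across the sphere $|x|=R$. Only the endpoints $s_0<s<s_1<\tfrac12$ are needed, and your dyadic argument covers those.
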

\begin{proof}
This lemma is proved in \cite{DAncona}; see Theorem 4.4 and Lemma 4.3 (1).
\end{proof}

We now introduce the Littlewood–Paley projections on $\mathbb R^3$ and the sphere $\mathbb S^2$.

Let $\overline{\chi}(x)$ be a smooth radial non-increasing function satisfying
\begin{align*}
\overline{\chi}(x)=1,\ when\ |x|\leq 1;\quad \overline{\chi}(x)=0,\ when\ |x|\geq 2.
\end{align*}
Define $\chi(x):=\overline{\chi}(x)-\overline{\chi}(2x)$. Set $\chi_0(x):=\overline{\chi}(x)$ and $\chi_j(x):=\chi\left(x/(2^j)\right)$ for $j\geq1$.
It follows that $supp\{\chi_j(x)\}\subseteq\{2^{j-1}\leq |x|\leq 2^{j+1}\}$ for $j\geq1$ and $\sum_{j=0}^{+\infty}\chi_j(x)\equiv1$.

The standard Littlewood-Paley operator $P_j$ is defined by
\[
\widehat{P_jf}(\xi):=\chi_j(\xi)\widehat{f}(\xi),\quad j\geq 0,
\]
where $\widehat{f}$ denotes the Fourier transform of $f$. Then $\sum_{j=0}^{+\infty}P_jf=f$ holds for any function $f$. For convenience, we set $P_jf=0$ if $j\leq -1$. Let
\[
P_{\leq j}:=\sum_{k=0}^jP_k,\quad \widehat{P_{\leq j}f}(\xi)=\overline{\chi}\left(\frac{\xi}{2^j}\right)\widehat{f}(\xi).
\]

Next, to introduce the Littlewood-Paley operator on the sphere, we recall the spherical Laplacian
\[
\Delta_{S^2}=\Omega\cdot \Omega.
\]
Using the polar coordinates
\begin{align}\label{polar}
x^1=r\cos\theta,\ x^2=r\sin\theta\cos\phi,\ x^3=r\sin\theta\sin\phi,
\end{align}
with $0\leq\theta\leq \pi$ and $0\leq\phi\leq2\pi$, we have
\begin{align}\label{Sph_Lap_def2}
\Delta_{S^2}=\frac1{\sin\theta}\frac{\partial}{\partial\theta}\left(\sin\theta\frac{\partial}{\partial\theta}\right)+\frac1{\sin^2\theta}\frac{\partial^2}{\partial\phi^2}.
\end{align}
Consider the eigenvalue problem $-\Delta_{S^2}\varphi=\mu\varphi$. Then the eigenvalues $\mu$ are given by $\mu_l=l(l+1), l=0,1,2,\cdots$, and for each $l$, there are $2l+1$ eigenfunctions $\varphi$, the spherical harmonics, denoted by $\varphi_{l,m}(\theta,\phi)$, $m=-l,-l+1,\cdots,0,1,\cdots,l$, which satisfy
\begin{align*}
\langle \varphi_{l,m},\overline{\varphi_{l^{'},m^{'}}}\rangle:=\int_{S^2}\varphi_{l,m}\overline{\varphi_{l^{'},m^{'}}}d_{S^2}=\delta_{ll^{'}}\delta_{mm^{'}},
\end{align*}
where $\overline{g}$ denotes the complex conjugate of $g$ and $\delta_{ll^{'}}$ is the Kronecker delta.
Here $d_{S^2}=\sin\theta d\theta d\phi$ denotes the surface measure on the sphere and we normalize $\varphi_{0,0}=\frac{1}{\sqrt{4\pi}}$.

The spherical harmonics $\varphi_{l,m}$ form a complete orthogonal basis on $L^2(S^2)$. Consequently, any function $f(\theta,\phi)$ defined on the sphere admits the expansion
\begin{align*}
f=\frac{1}{4\pi}\int_{S^2}f(\theta,\phi)d_{S^2}+\sum_{l=1}^{+\infty}\sum_{m=-l}^l\langle f,\overline{\varphi_{l,m}}\rangle\varphi_{l,m}, 
\end{align*}
where $\langle f,\overline{\varphi_{l,m}}\rangle=\int_{S^2}f\overline{\varphi_{l,m}}d_{S^2}$. We define the Littlewood-Paley decomposition on the sphere by
\begin{align}\label{Rk_def}
&R_0f:=\frac{1}{4\pi}\int_{S^2}f(\theta,\phi)d_{S^2},
\nonumber\\&R_kf:=\sum_{l=2^{k-1}}^{2^{k}-1}\sum_{m=-l}^l\langle f,\overline{\varphi_{l,m}}\rangle\varphi_{l,m},\ k=1,2,\cdots.
\end{align}
Then $f=\sum_{k=0}^{+\infty}R_kf$. Let $R_{\leq k}f:=\sum_{j=0}^kR_jf$ for any function $f$.

We now establish a sharp radial-decay  $L^{\infty}$ estimates, frequently used in Section \ref{sec3}
\begin{lemma}\label{radial_decay}
For any function $f$  with the right-hand side finite, we have
\[
\langle r\rangle|f|\leq C\sum_{j=0}^{+\infty}(2^j)^{3/2}\|P_jf\|_{L^2(\mathbb R^3)}+C\sum_{j=0}^{+\infty}\sum_{k=j+20}^{+\infty}(2^{j})^{1/2}2^k\|R_kP_jf\|_{L^2(\mathbb R^3)}.
\]
\end{lemma}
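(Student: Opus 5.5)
The plan is to decompose $f=\sum_j P_jf$ and, for each $j$, further split $P_jf=R_{\leq j+19}P_jf+\sum_{k\geq j+20}R_kP_jf$. By the triangle inequality it then suffices to prove two pointwise bounds for a function $g$ localized at frequency $2^j$:
\[
\langle r\rangle|R_{\leq j+19}P_jf|\leq C(2^j)^{3/2}\|P_jf\|_{L^2},\qquad \langle r\rangle|R_kP_jf|\leq C(2^j)^{1/2}2^k\|R_kP_jf\|_{L^2}.
\]
Here I use $\|R_{\leq k}g\|_{L^2}\leq\|g\|_{L^2}$ for the first bound. Throughout, $R_k$ acts only on the angular variable, so it commutes with radial weights and with $\Omega$; it does not commute with $P_j$ exactly, but it does commute with radial Fourier multipliers. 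Since $\chi_j$ is radial, $P_j$ commutes with rotations, and hence with $R_k$. This is what makes $R_kP_jf$ still frequency-localized at $2^j$.

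\textbf{The region $r\leq 2^{-j}$.} Here $\langle r\rangle\lesssim 1$, so Bernstein gives $|P_jf|\lesssim (2^j)^{3/2}\|P_jf\|_{L^2}$. This disposes of both pieces, using $2^{j/2}2^k\geq 2^{3j/2}$ for the second. So assume $r\geq 2^{-j}$; also $r\geq1$ may be assumed, since for $r\leq1$ Bernstein already suffices.

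\textbf{Main estimate for $r\geq1$.} I would use a radial-angular Sobolev embedding. On the sphere of radius $r$, write $g(r\omega)$. By Sobolev on $\mathbb S^2$, $\sup_\omega|g(r\omega)|\lesssim \|\langle\Omega\rangle^{1+\epsilon}g(r\cdot)\|_{L^2(\mathbb S^2)}$. For the angular-localized piece this becomes $2^{k}\|R_kg(r\cdot)\|_{L^2(\mathbb S^2)}$, and Bernstein on the sphere avoids the $\epsilon$ loss because each $R_k$ block has $\sim 2^{2k}$ dimensions: the reproducing kernel bound $\sum_{l\sim 2^k}\sum_m|\varphi_{l,m}|^2\sim 2^{2k}$ gives $|R_kh(\omega)|\leq C2^k\|R_kh\|_{L^2(\mathbb S^2)}$ exactly. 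Then, in the radial variable, the one-dimensional trace/Bernstein bound
\[
r^2\|g(r\cdot)\|^2_{L^2(\mathbb S^2)}\lesssim 2^j\|g\|^2_{L^2(\mathbb R^3)}
\]
holds for frequency-$2^j$ functions. This is the trace $\sup_r\int|g(r\omega)|^2r^2d\omega\lesssim\|g\|_{L^2}\|\partial_rg\|_{L^2}$, using $\partial_r(r^2|g|^2)$ integrated from infinity; the term $2r|g|^2$ is controlled via $r\geq 2^{-j}$ and Hardy (Lemma \ref{Hardy}). Combining, $r|R_kP_jf|\lesssim 2^k2^{j/2}\|R_kP_jf\|_{L^2}$. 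This argument requires $R_kP_jf$ to have frequency $\sim2^j$, which holds by the commutation noted above.

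\textbf{The low angular modes and the main obstacle.} For the sum $R_{\leq j+19}P_jf$, the same argument with the spherical Bernstein bound $|R_{\leq K}h|\lesssim 2^K\|h\|$ at $K=j+19$ gives $r|\cdot|\lesssim 2^j2^{j/2}\|P_jf\|_{L^2}$, which matches the first term. The main obstacle I expect is making the radial trace estimate rigorous with the sharp factor $2^{j/2}$. In particular, $\partial_r$ of a frequency-$2^j$ function is bounded in $L^2$ by $2^j\|g\|$ only because $\partial_r=\frac{x}{r}\cdot\nabla$, with $|x/r|=1$. The trace inequality $r^2\int_{\mathbb S^2}|g|^2\leq 2\|g\|_{L^2}\|\nabla g\|_{L^2}$ follows from $\frac{d}{dr}$ integrated from $r$ to $\infty$ after noting that $\partial_\rho(\rho^2|g|^2)=2\rho^2 g\partial_\rho g+2\rho|g|^2$. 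The second term has a favorable sign, since integrating from $r$ outward gives $r^2\int|g(r\cdot)|^2=-\int_r^\infty\partial_\rho(\cdots)\leq 2\int|g||\partial_\rho g|\rho^2$, so no Hardy is even needed.
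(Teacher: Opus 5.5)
Your proposal is correct and follows essentially the same route as the paper: you split $f$ into the blocks $R_kP_jf$ (using $[R_k,P_j]=0$) and bound each angular block pointwise on the sphere by a spherical Sobolev/Bernstein inequality. You then convert the sphere norm at radius $r$ into $2^{j/2}\|\cdot\|_{L^2(\mathbb R^3)}$ via the radial trace $r^2\int_{\mathbb S^2}|g(r\omega)|^2\,d\omega\lesssim\|g\|_{L^2}\|\nabla g\|_{L^2}$, and use Bernstein for $r\le 1$. The differences are cosmetic. The paper passes through $\|\Omega R_k\cdot\|_{L^2(\mathbb S^2)}$ (Lemmas \ref{ALP1} and \ref{equal_omiga}) and sums the low blocks $k\le j+19$ geometrically. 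You instead use the reproducing-kernel Bernstein bound directly and treat $R_{\le j+19}$ as a single block, which also spares you the separate $R_0$ case.
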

\begin{proof}
For any $k\geq1$, by using lemma \ref{ALP1},
\[
   r^2|R_kP_jf|^2\leq Cr^2\int_{S^2}|\Omega R_kP_jf|d_{S^2}\leq C\int_{\mathbb R^3}|\Omega R_kP_jf||\nabla\Omega R_kP_jf|dx,
\]
which combining Lemma \ref{equal_omiga}, Lemma \ref{ALP_commu} and H\"older inequality implies that
\[
   r^2|R_kP_jf|^2\leq C2^{j}\|\Omega R_kP_jf\|^2_{L^2}\leq C2^j(2^k)^2\|R_kP_jf\|^2_{L^2(\mathbb R^3)}.
\]
Similarly, 
\[
r^2|R_0P_jf|^2\leq C2^{j}\|P_jf\|_{L^2(\mathbb R^3)}^2.
\]
Thus
\[
r|f|\leq r\sum_{k=0}^{+\infty}\sum_{j=0}^{+\infty}|R_kP_jf|\leq C\left(\sum_{k=1}^{+\infty}\sum_{j=0}^{+\infty}(2^{j})^{\frac12}2^k\|R_kP_jf\|_{L^2(\mathbb R^3)}+\sum_{j=0}^{+\infty}(2^{j})^{\frac12}\|P_jf\|_{L^2(\mathbb R^3)}\right).
\]
We divide it into two cases: $k\geq j+20$ and $k<j+20$. In the case $r>1$, the estimate 
\[
\sum_{j=0}^{+\infty}\sum_{k=1}^{j+19}(2^{j})^{\frac12}2^k\|R_kP_jf\|_{L^2(\mathbb R^3)}\leq \sum_{j=0}^{+\infty}\sum_{k=1}^{j+19}(2^{j})^{\frac12}2^k\|P_jf\|_{L^2(\mathbb R^3)}\leq C\sum_{j=0}^{+\infty}(2^j)^{3/2}\|P_jf\|_{L^2(\mathbb R^3)},
\]
concludes the proof. When $r\leq1$, Bernstein's inequality shows that this lemma holds.
\end{proof}

We also need the $L^2$ estimates of the product below.
\begin{lemma}\label{product_radial}
For any $j\geq0$ and any functions $f,g$ with the right-hand side finite, we have
\begin{align*}
&\|r(P_jf)R_0g\|_{L^2}^2\leq C2^j\|P_jf\|^2_{L^2}\|g\|_{L^2}^2;
\\&\|r(P_jf)R_kg\|_{L^2}^2\leq C2^j(2^k)^2\|P_jf\|^2_{L^2}\|R_kg\|_{L^2}^2,\ for\ k\geq1.
\end{align*}
\end{lemma}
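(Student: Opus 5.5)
The plan is to mirror the proof of Lemma \ref{radial_decay}, with the roles split: the factor $r$ goes onto the angular piece of $g$, and $P_jf$ is bounded in $L^\infty$ along each radial ray. We work in polar coordinates, writing $\|r(P_jf)R_kg\|_{L^2}^2=\int_0^\infty r^4\int_{S^2}|P_jf(r\omega)|^2|R_kg(r\omega)|^2\,d_{S^2}\,dr$, and take the supremum over the sphere of one factor at each fixed radius.

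\textbf{Case $k\ge1$.} We put the supremum on $R_kg$. As in Lemma \ref{radial_decay}, Lemma \ref{ALP1} gives the pointwise bound on spheres
\[
\sup_{\omega}|R_kg(r\omega)|^2\le C\int_{S^2}|\Omega^{\le 2}R_kg(r\omega)|^2\,d_{S^2}\le C(2^k)^4\int_{S^2}|R_kg(r\omega)|^2\,d_{S^2},
\]
using Lemma \ref{equal_omiga} for $\|\Omega R_k h\|_{L^2(S^2)}\sim 2^k\|R_kh\|_{L^2(S^2)}$. This loses $(2^k)^2$ rather than $2^k$, so the split has to be the other way around.

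We therefore bound $P_jf$ in $L^\infty$ over each sphere, paying with the radial weight. The trace-type inequality used in Lemma \ref{radial_decay} gives
\[
r^2\sup_\omega|P_jf(r\omega)|^2\le C\|P_jf\|_{L^2}\|\nabla P_jf\|_{L^2}+(\text{angular terms})\le C2^j\|P_jf\|_{L^2}^2,
\]
provided we use the version $r^2|h(r\omega)|^2\le C\int_{\mathbb R^3}|\Omega^{\le2}h||\nabla\Omega^{\le 2}h|\,dx$. This version costs angular derivatives on $P_jf$, which are not free. The better route is a two-step bound:
\[
r^2\int_{S^2}|P_jf|^2|R_kg|^2\,d_{S^2}\le \Big(r^2\sup_{\omega}|R_kg|^2\Big)\int_{S^2}|P_jf|^2\,d_{S^2}.
\]
This rests on the one-dimensional radial Sobolev bound $r^2|h(r\omega)|^2\le C\|h\|_{L^2}\|\partial_rh\|_{L^2}$, applied in $L^2(S^2)$-valued form as
\[
r^2\int_{S^2}|P_jf(r\omega)|^2\,d_{S^2}\le C\|P_jf\|_{L^2}\|\nabla P_jf\|_{L^2}\le C2^j\|P_jf\|_{L^2}^2,
\]
which holds uniformly in $r$. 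We pair it with the sphere-sup bound on $R_kg$ at loss $2^k$, namely $\sup_\omega|R_kg|^2\le C\,2^k\cdot 2^k\int_{S^2}|R_kg|^2$ (dimension of the $l\sim2^k$ harmonic space is about $(2^k)^2$, Lemma \ref{ALP1}). Integrating in $r$ with the remaining $r^2\,dr$ then gives
\[
\int_0^\infty\Big(r^2\!\int_{S^2}|P_jf|^2\Big)\Big(\sup_\omega|R_kg|^2\Big)r^2\,dr\le C2^j(2^k)^2\|P_jf\|_{L^2}^2\|R_kg\|_{L^2}^2.
\]

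\textbf{Case $k=0$.} Here $R_0g$ is the spherical average, so $\sup_\omega|R_0g(r\omega)|^2\le C\int_{S^2}|g(r\omega)|^2\,d_{S^2}$ by Cauchy--Schwarz. The same argument then yields the first inequality, with $\|R_0g\|_{L^2}\le\|g\|_{L^2}$.

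The main obstacle is the Bernstein-type sup bound $\sup_{S^2}|R_kh|^2\le C(2^k)^2\|R_kh\|_{L^2(S^2)}^2$. The plan is to obtain it from the reproducing kernel of harmonics with $l<2^k$, whose diagonal equals $\sum_{l}(2l+1)/4\pi\sim(2^k)^2$, or equivalently from Lemma \ref{ALP1} together with Lemma \ref{equal_omiga}. The radial Sobolev step uses $\partial_r(r^2|h|^2)$-type integration from infinity, which is standard.
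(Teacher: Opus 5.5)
Your final argument is correct and is the paper's proof. You use the $L^2(S^2)$-valued radial trace bound $r^2\int_{S^2}|P_jf(r\omega)|^2\,d_{S^2}\le C\|P_jf\|_{L^2}\|\nabla P_jf\|_{L^2}\le C2^j\|P_jf\|_{L^2}^2$, uniformly in $r$, together with the sphere-sup bound $\sup_\omega|R_kg|^2\le C\|\Omega R_kg\|_{L^2(S^2)}^2\le C(2^k)^2\|R_kg\|_{L^2(S^2)}^2$ from Lemma \ref{ALP1} and Lemma \ref{equal_omiga}, and Cauchy--Schwarz for $R_0$.

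Delete the opening detour, though. Lemma \ref{ALP1} involves only one $\Omega$, so it never costs $(2^k)^4$, and the sup belongs on $R_kg$ from the start rather than the split being ``the other way around.''
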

\begin{proof}
A straight calculation shows that
\begin{align*}
\|r(P_jf)R_0g\|_{L^2}^2 \leq &C\int_0^{+\infty}r^4\int_{S^2}|P_jf|^2d_{S^2}\int_{S^2}|g|^2d_{S^2}dr
 \leq C\int|P_jf||\nabla P_jf|dx\int|g|^2dx.
\end{align*}
For any $k\geq1$, using lemma \ref{ALP1} gives
\begin{align*}
\|r(P_jf)R_kg\|_{L^2}^2\leq &C\int_0^{+\infty}r^4\int_{S^2}|P_jf|^2d_{S^2}\int_{S^2}|\Omega R_kg|^2d_{S^2}dr
\leq C\int|P_jf||\nabla P_jf|dx\int|\Omega R_kg|^2dx.
\end{align*}
This completes the proof.
\end{proof}

Finally, we present another key ingredient in the proof of Theorem \ref{main_theorem}: the time-decay Sobolev embedding lemmas, namely the $L^{\infty}-L^2$ estimates.
\begin{lemma}\label{sob1}
For any scalar function $f$ and $k\geq1$, we get 
\begin{align}\label{sob2}
&(1+t)|R_kP_jf|\leq C(2^j)^{\frac32}\|R_kP_jf\|_{L^2(\mathbb R^3)}+C2^{\frac{j}2}\sum_{k_1=k-1}^{k+1}\sum_{j_1=j-1}^{j+1}\|P_{j_1}R_{k_1}Z^{\leq1}f\|_{L^2(\mathbb R^3)}, j\geq0,
\\&\label{sob3}(1+t)|R_0P_jf|\leq C2^{\frac j2}\sum_{j_1=j-1}^{j+1}\|P_{j_1}Z^{\leq1}f\|_{L^2(\mathbb R^3)}, \ for \ j\geq 1,
\\&\label{sob4}(1+t)|R_0P_0f|\leq C\sum_{j\leq 2}\|P_jZ^{\leq1}f\|_{H^1(\mathbb R^3)}+C\|\partial_tP_0f\|_{\dot{H}^{-1}(\mathbb R^3)}.
\end{align}
\end{lemma}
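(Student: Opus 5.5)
The approach is to split space into the region far from the light cone, where the Klainerman-type identities give time decay, and a neighbourhood of the light cone, where the radial-decay Lemma \ref{radial_decay} (really its proof) together with $r\sim t$ does the job. Throughout, $v:=R_kP_jf$, and we may assume $t\geq1$, since for $t\leq 1$ Bernstein's inequality already gives $|v|\leq C(2^j)^{3/2}\|v\|_{L^2}$.

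\emph{Near the cone.} In the region $r\geq t/2$ we use the pointwise bound from the proof of Lemma \ref{radial_decay}:
\[
r^2|R_kP_jf|^2\leq C2^j\|\Omega R_kP_jf\|_{L^2}^2 .
\]
Since $\Omega$ commutes with both $R_k$ and $P_j$, we have $\|\Omega R_kP_jf\|_{L^2}\leq \|R_kP_j Zf\|_{L^2}$. Hence
\[
(1+t)|v|\leq C2^{j/2}\|P_jR_kZf\|_{L^2},
\]
which is contained in the second term of \eqref{sob2}. The same argument applied to $R_0$, using the bound $r^2|R_0P_jf|^2\leq C2^j\|P_jf\|_{L^2}^2$, gives \eqref{sob3} in the region $r\geq t/2$. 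Here the $L^2$ norm of $f$ itself is counted among $Z^{\leq1}f$.

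\emph{Inside the cone, $r<t/2$.} Here we use the identity \eqref{lemm2_add} in the form
\[
(t^2-r^2)\partial_k=x^j\widetilde\Omega_{kj}-x^kS+tL_k ,
\]
together with \eqref{cc5}. The scaling operator $S$ is not allowed, so we must eliminate it. We do this by combining the two identities: $tS=x^iL_i+(t^2-r^2)\partial_t$. This gives
\[
t(t^2-r^2)\partial_k=t\,x^j\widetilde\Omega_{kj}-x^kx^iL_i-x^k(t^2-r^2)\partial_t+t^2L_k .
\]
Dividing by $t(t^2-r^2)\sim t^3$ when $r<t/2$ yields the pointwise bound
\[
t|\partial v|\leq C(|Zv|+ r|\partial v|)\leq C|Zv|+\tfrac{t}{2}|\partial v|,
\]
and after absorbing the last term, $t|\partial v|\leq C|Zv|$. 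To pass from gradient bounds to bounds on $v$ itself, we use frequency localization: $|\nabla|^{-1}$ acting on a function at frequency $2^j$ costs $2^{-j}$, and the Klein--Gordon structure gives the undifferentiated part as well. Rather than pointwise division, the cleaner route is to work in $L^2$ on the region. Multiply by a cutoff $\chi(r/t)$ supported in $r<t/2$ and apply Bernstein to $\chi v$. Alternatively, apply Sobolev $H^{3/2+}$ at scale $2^j$ to $t\,\chi\,\partial v$, which is bounded by $C\|Z^{\leq1}v\|_{L^2}$ with $\chi$ harmless at scale $\gtrsim 1$. For $j\geq1$, $|v|\lesssim 2^{-j}\sup|\partial v|$ up to neighbouring blocks, and this produces $2^{j/2}\|P_{j_1}R_{k_1}Z^{\leq1}f\|_{L^2}$ once we use $\|P_{j}g\|_{L^\infty}\leq C2^{3j/2}\|P_jg\|_{L^2}$ and lose one power $2^j$.

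\emph{Commutators.} The fields $L$ and $\partial$ do not commute with $R_k$ and $P_j$. Commuting $Z$ through $R_kP_j$ produces neighbouring blocks $R_{k_1}P_{j_1}$ with $|k-k_1|\leq1$ and $|j-j_1|\leq1$, plus lower-order errors. I would justify this with the appendix commutator lemmas (Lemma \ref{ALP_commu}, Lemma \ref{equal_omiga}); this is exactly why \eqref{sob2} carries sums over $k_1,j_1$. Any remaining error term is bounded by $(2^j)^{3/2}\|R_kP_jf\|_{L^2}$, the first term of \eqref{sob2}.

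\emph{The case $j=0$, i.e.\ \eqref{sob4}.} Frequency zero gives no gain from $|\nabla|^{-1}$. So we recover $v$ from $\partial_t v$ and $\nabla v$ directly, in the $H^1$ norm. This is why the $\dot H^{-1}$ norm of $\partial_tP_0f$ appears.

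The main obstacle is the interior step. Passing from the pointwise bound $t|\partial v|\leq C|Zv|$ to an $L^\infty$ bound on $v$ with the exact factor $2^{j/2}$ means the cutoff, the commutators $[L,P_j]$ and $[L,R_k]$, and the Bernstein losses must all be absorbed into neighbouring blocks without losing powers of $2^j$ or $2^k$.
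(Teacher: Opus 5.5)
Your near-cone step ($r\geq t/2$) is fine, but the interior step has a genuine gap. Eliminating $S$ via \eqref{cc5} only gives $t|\nabla v|\leq C|Zv|+r|\partial_tv|$, which is just $t\partial_kv=L_kv-x^k\partial_tv$. The error term involves $\partial_tv$, while the left side contains only spatial derivatives, so nothing can be absorbed. The claimed bound $t|\partial v|\leq C|Zv|$ is in fact false. At a point $(t,x)$, the values of $\Omega v$ and $Lv$ determine $(\partial_tv,\nabla v)$ only modulo multiples of $(t,-x)$, a direction annihilated by both $L$ and $\Omega$ and detected only by $S$. At $x=0$ one has $Zv=(\partial v,0,t\nabla v)$, so $t\partial_tv(t,0)$ is not controlled at all. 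Consequently, for $t/4<r<t/2$ you get no decay of $\nabla v$. The subsequent step $|v|\lesssim 2^{-j}\sup|\nabla v|$ therefore cannot produce the factor $(1+t)^{-1}$; it is also nonlocal, so an estimate valid only on $\{r<t/2\}$ would not suffice anyway. Your explanation of the $\dot H^{-1}$ term in \eqref{sob4} likewise identifies no mechanism.

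The missing idea is the exact identity $t\widetilde\Omega_{ij}=x^iL_j-x^jL_i$. It gives $t|\Omega g|\leq 2r|Lg|$ everywhere, with no $\partial_t$ remainder, and with it no splitting of space is needed; your near-cone argument is the special case where $r\geq t/2$ is used instead. For $k\geq1$ the paper combines Lemma \ref{ALP1} with integration in the radial variable:
\[
t^2\sup_{|x|=r}|R_kP_jf|^2\leq C\int_{\mathbb R^3}r^{-2}\,|t\Omega R_kP_jf|\,|\partial_r(t\Omega R_kP_jf)|\,dx .
\]
Writing $t\Omega R_kP_jf=R_k(t\Omega P_jf)$ and applying the identity, both factors become $R_k$ of terms like $(x/r)LP_jf$, one of them carrying an extra $\partial_r$ or a factor $r^{-1}$. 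These are handled as follows:
\begin{itemize}
\item Lemma \ref{xw1} absorbs the factors $x/r$ and is the source of the neighbouring $k_1$.
\item Hardy's inequality (Lemma \ref{Hardy}) controls the $r^{-1}$ factors.
\item The commutator $[L_m,P_j]$ from Lemma \ref{LP_commu} is the source of the neighbouring $j_1$.
\end{itemize}
The result is a bound $C2^j\big(\sum\|P_{j_1}R_{k_1}Z^{\leq1}f\|_{L^2}\big)^2$, uniform in $r$.

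For $R_0$, Lemma \ref{ALP1} is unavailable. The paper instead writes $P_jf=\nabla\cdot w$ with $\Delta w^i=\partial_iP_jf$, bounds $t^2(R_0P_jf)^2\leq C\int r^{-2}|t\nabla P_jf|\,|t\nabla w|\,dx$, and substitutes $t\nabla=L-x\partial_t$. The weight $r^{-2}$ turns the troublesome $x\partial_t$ into the bounded multiplier $(x/r)\partial_t$. The $L$-terms are then handled by Hardy's inequality and the elliptic estimate \eqref{xw3}, and \eqref{xw3} is exactly where $\|\partial_tP_0f\|_{\dot H^{-1}}$ enters when $j=0$.
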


\begin{proof}
By using lemma \ref{ALP1}, for $k\geq1$, we have
\begin{align*}
t^2\|R_kP_jf\|^2_{L^{\infty}(S^2)}&\leq Ct^2\int_0^{+\infty}\int_{S^2}|\Omega R_kP_jf||\partial_r\Omega R_kP_jf|d_{S^2}dr
\\&\leq C\int_0^{+\infty}\left\|R_k\left(\frac tr\Omega P_jf\right)\right\|_{L^2(S^2)}\left\| R_k\left(\frac 1r\partial_r(t\Omega P_jf)\right)\right\|_{L^2(S^2)}r^2dr.
\end{align*}
A straight calculation shows that 
\begin{align*}
   &t\widetilde{\Omega}_{ij}=x^i(t\partial_j+x^j\partial_t)-x^j(t\partial_i+x^i\partial_t)=x^iL_j-x^jL_i,
   \\&\partial_r(t\widetilde{\Omega}_{ij}f)=x^i\partial_rL_jf-x^j\partial_rL_if+\frac{x^i}rL_jf-\frac{x^j}rL_if,
\end{align*}
which implies
\begin{align}
    \label{xw2}& \left\|R_k\left(\frac tr\Omega P_jf\right)\right\|_{L^2(S^2)}\leq C\sum_{k_1=k-1}^{k+1}\left\|R_{k_1}L P_jf\right\|_{L^2(S^2)},
    \\ \nonumber&\left\| R_k\left(\frac 1r\partial_r(t\Omega P_jf)\right)\right\|_{L^2(S^2)}\leq C\sum_{k_1=k-1}^{k+1}\left\|R_{k_1}\partial_ rL P_jf\right\|_{L^2(S^2)}+C\frac1r\sum_{k_1=k-1}^{k+1}\left\|R_{k_1}L P_jf\right\|_{L^2(S^2)}.
\end{align}
where we used Lemma \ref{xw1}. Thus by Hardy inequality (Lemma \ref{Hardy}), we have
\begin{align*}
t^2\|R_kP_jf\|^2_{L^{\infty}(S^2)}\leq C\sum_{k_1=k-1}^{k+1}\left\|R_{k_1}L P_jf\right\|_{L^2(\mathbb R^3)}\sum_{k_1=k-1}^{k+1}\left\|\nabla R_{k_1}L P_jf\right\|_{L^2(\mathbb R^3)}.
\end{align*}
Using $[L_m,P_j]f=\sum_{l=j-1}^{j+1}\widetilde{P}_{m,j}\partial_tP_lf$ (Lemma \ref{LP_commu}) and Lemma \ref{ALP_commu}, for $k\geq1$, we arrive at
\begin{align*}
t^2\|R_kP_jf\|^2_{L^{\infty}(S^2)}\leq &C\sum_{k_1=k-1}^{k+1}\left(\left\|R_{k_1} P_jLf\right\|_{L^2(\mathbb R^3)}+\sum_{m=1}^3\sum_{l=j-1}^{j+1}\left\|R_{k_1}F(\widetilde{P}_{m,j}\partial_tP_lf)\right\|_{L^2(\mathbb R^3)}\right)
\\&\times\sum_{k_1=k-1}^{k+1}\left(2^j\left\|R_{k_1} P_jLf\right\|_{L^2(\mathbb R^3)}+\sum_{m=1}^3\sum_{l=j-1}^{j+1}\left\||\xi|R_{k_1}F(\widetilde{P}_{m,j}\partial_tP_lf)\right\|_{L^2(\mathbb R^3)}\right).
\end{align*}
Combing Lemma \ref{xw1}, then inequality \eqref{sob2} holds when $t\geq1$. When $t\leq1$, employing Bernstein's inequality gives inequality \eqref{sob2}.

Now let's consider $R_0P_jf$. A straight calculation shows that
\begin{align*}
t^2(R_0P_jf)^2=t^2\left(\frac{1}{4\pi}\int_{S^2}P_jfd_{S^2}\right)^2\leq Ct^2\int_{S^2}|P_jf|^2d_{S^2}\leq Ct^2\int_0^{+\infty}\int_{S^2}|\partial_rP_jf||P_jf|d_{S^2}dr.
\end{align*}
Let $w=(w^1,w^2,w^3)$, where $w^i$ is the solution of $\Delta w^i=\partial_iP_jf$, for $i=1,2,3$. Then  $P_jf=\nabla\cdot w$. By using $t\partial_i=L_i-x^i\partial_t$, we have
\begin{align*}
t^2(R_0P_jf)^2
&\leq Ct^2\int_0^{+\infty}\int_{S^2}|\nabla P_jf||\nabla w|d_{S^2}dr
\\&\leq C\left(\int |LP_jf||L w|\frac{1}{r^2}dx+\int |LP_jf||\partial_tw|\frac{1}rdx+\int |\partial_tP_jf||Lw|\frac1r dx+\int |\partial_tP_jf||\partial_tw|dx\right).
\end{align*}
Since 
\[
\Delta L_kw^i=L_k \Delta w^i+2\partial_t\partial_kw^i=L_k \partial_iP_jf+2\partial_t\partial_kw^i=\partial_iL_k P_jf-\delta_{ki}\partial_tP_jf+2\partial_t\partial_kw^i,
\]
we have
\begin{align}\label{xw3}
\|\nabla Lw\|_{L^2(\mathbb R^3)}\leq C\|LP_jf\|_{L^2(\mathbb R^3)}+C\|\partial_tP_jf\|_{\dot{H}^{-1}(\mathbb R^3)}.
\end{align}
By using Hardy inequality (Lemma \ref{Hardy}) and inequality \eqref{xw3}, for $j\geq1$, we have
\[
 \int |LP_jf||L w|\frac{1}{r^2}dx\leq C\|\nabla L P_jf\|_{L^2(\mathbb R^3)}\|\nabla Lw\|_{L^2(\mathbb R^3)}\leq C2^j\|L P_jf\|^2_{L^2(\mathbb R^3)}+C\|\partial_tP_jf\|^2_{L^2(\mathbb R^3)},
\]
which implies that for $j\geq1$,
\begin{align*}
t^2(R_0P_jf)^2\leq C2^j\|L P_jf\|^2_{L^2(\mathbb R^3)}+C\|\partial_tP_jf\|^2_{L^2(\mathbb R^3)}.
\end{align*}
For the case $j=0$, we have
\[
t^2(R_0P_0f)^2\leq C\|L P_0f\|^2_{H^1(\mathbb R^3)}+C\|\partial_tP_0f\|^2_{\dot{H}^{-1}(\mathbb R^3)}+C\|\partial_tP_0f\|^2_{L^2(\mathbb R^3)}.
\]
Thus inequalities \eqref{sob3} and \eqref{sob4} hold when $t\geq1$. For the case $t<1$, by Hardy inequality, a straight calculation shows that
\begin{align*}
    |R_0P_jf|^2&\leq C\int_{S^2}|P_jf|^2d_{S^2} \leq C\int_0^{+\infty}\int_{S^2}|\nabla P_jf||P_jf|d_{S^2}dr \leq C\|\nabla^2 P_jf\|_{L^2}\|\nabla P_jf\|_{L^2}.
\end{align*}
This concludes the proof.
\end{proof}

\begin{lemma}\label{sob_lemma}
For any small constants $\varepsilon_1,\varepsilon_2$ in $(0,1)$ and any scalar function $f$ with the right-hand side finite, the following estimate holds
\begin{align*}
(1+t)^2\|f(t,\cdot)\|^2_{L^{\infty}(\mathbb R^3)}&\leq C\|\partial_tP_0f\|^2_{\dot{H}^{-1}(\mathbb R^3)}+C\frac{1}{(1-2^{-\varepsilon_1})^3}\sum_{j=0}^{+\infty}(2^{j})^{1+\varepsilon_1}\|P_jZ^{\leq1}f\|^2_{L^2(\mathbb R^3)}
\\&\quad+C\frac{1}{(1-2^{-\varepsilon_1})(1-2^{-\varepsilon_2})}\sum_{j=0}^{+\infty}\sum_{k=j+20}^{+\infty}(2^{j})^{1+\varepsilon_1}(2^{k})^{\varepsilon_2}\|P_jR_kZ^{\leq1}f\|^2_{L^2(\mathbb R^3)}.
\end{align*}
\end{lemma}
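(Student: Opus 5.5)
We decompose $f=\sum_{j\geq0}\sum_{k\geq0}R_kP_jf$ pointwise, so that
\[
(1+t)|f|\leq\sum_{j}(1+t)|R_0P_jf|+\sum_j\sum_{k\geq1}(1+t)|R_kP_jf|.
\]
Each piece is bounded by Lemma \ref{sob1}, and the resulting $\ell^1$ sums are converted into the weighted $\ell^2$ sums in the statement by Cauchy--Schwarz. The factors $(1-2^{-\varepsilon_1})^{-1}$ and $(1-2^{-\varepsilon_2})^{-1}$ come from the geometric series $\sum_j 2^{-\varepsilon_1 j}$ and $\sum_k 2^{-\varepsilon_2 k}$.

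\textbf{The $R_0$ part.} For $j\geq1$, \eqref{sob3} gives $(1+t)|R_0P_jf|\leq C2^{j/2}\sum_{|j_1-j|\leq1}\|P_{j_1}Z^{\leq1}f\|_{L^2}$. Summing in $j$ and applying Cauchy--Schwarz with weights $2^{-\varepsilon_1 j/2}$, the square of this sum is bounded by
\[
C(1-2^{-\varepsilon_1})^{-1}\sum_j(2^j)^{1+\varepsilon_1}\|P_jZ^{\leq1}f\|_{L^2}^2.
\]
For $j=0$ we use \eqref{sob4}, which produces the term $\|\partial_tP_0f\|_{\dot H^{-1}}$ together with finitely many low-frequency pieces. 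Those pieces are absorbed because $\|P_j g\|_{H^1}\leq C\|P_jg\|_{L^2}$ for $j\leq2$.

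\textbf{The $R_k$ part, $k\geq1$.} We split into $k\geq j+20$ and $1\leq k<j+20$.

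In the region $k<j+20$, we first sum over $k$ using the almost-orthogonality of the $R_k$, rather than applying \eqref{sob2} termwise. This step is the main obstacle: applied termwise, \eqref{sob2} costs about $j$ terms per frequency, i.e. a factor $j\lesssim 2^{\varepsilon_1 j}/\varepsilon_1$. I would handle it in one of two ways.

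First option: estimate $R_{<j+20}P_jf$ directly, redoing the argument of Lemma \ref{sob1} with $R_{\leq j+20}$ in place of $R_k$. By Lemma \ref{ALP1} this yields $(1+t)|R_{<j+20}P_jf|\leq C2^{j/2}\|P_jZ^{\leq1}f\|$-type bounds plus a term $(2^j)^{3/2}\|P_jf\|_{L^2}/(1+t)$, which is harmless after Bernstein's inequality for $t\leq1$.

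Second option, if that is not available: accept the loss and use Cauchy--Schwarz in $k$. Since $\sum_{k<j+20}1\lesssim j\lesssim\varepsilon_1^{-1}2^{\varepsilon_1 j}$, this gives a bound of the form $C(1-2^{-\varepsilon_1})^{-2}\sum_j(2^j)^{1+\varepsilon_1}\|P_jZ^{\leq1}f\|^2$. Combining it with the $j$-summation explains the cube $(1-2^{-\varepsilon_1})^{-3}$ in the statement, so I expect this is the intended route.

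Note that the first term $(2^j)^{3/2}\|R_kP_jf\|$ in \eqref{sob2} is the non-decaying one. It is only relevant for $t\leq1$, where I would instead use Bernstein's inequality directly, bounding by $(2^j)^{3/2}\|R_kP_jf\|$. In that case $(1+t)^2\leq4$, and the $Z^{\leq1}$ sum contains $\partial$, which supplies the extra $2^j$.

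\textbf{Region $k\geq j+20$.} For $t\geq1$, \eqref{sob2} gives
\[
(1+t)|R_kP_jf|\lesssim 2^{j/2}\sum_{|k_1-k|\leq1,\,|j_1-j|\leq1}\|P_{j_1}R_{k_1}Z^{\leq1}f\|.
\]
We apply Cauchy--Schwarz over $(j,k)$ with weights $2^{-\varepsilon_1j/2}2^{-\varepsilon_2k/2}$, which yields the double-sum term with constant $(1-2^{-\varepsilon_1})^{-1}(1-2^{-\varepsilon_2})^{-1}$. The neighbours $k_1$ with $k_1=j+19$ fall back into the previous region. For small $t$ we again use Bernstein's inequality together with the fact that $Z^{\leq1}$ includes the derivatives $\partial$.

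Finally, squaring and adding the three contributions gives the stated estimate.
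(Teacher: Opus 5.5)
Your second option is the paper's argument. The paper bounds the block $1\leq k\leq j+19$ crudely by $(j+20)\,2^{j/2}\|P_jZ^{\leq1}f\|_{L^2}$ and uses $\sum_j (j+20)^2 2^{-\varepsilon_1 j}\leq C(1-2^{-\varepsilon_1})^{-3}$, which is exactly where the cube comes from. Your Cauchy--Schwarz in $k$ with the $L^2$-orthogonality of the $R_k$ would even give only $(1-2^{-\varepsilon_1})^{-2}$. The $R_0$ part and the block $k\geq j+20$ are handled just as you describe.

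Drop the first option, however. Lemma \ref{ALP1} holds only for a single dyadic block $R_k$. For $R_{\leq j+20}$, bounding $\|\cdot\|_{L^\infty(S^2)}$ by $\|\Omega\,\cdot\|_{L^2(S^2)}$ costs a factor of order $\sqrt{j}$, since $H^1(S^2)\not\hookrightarrow L^\infty(S^2)$, so that route gains nothing over the second option.
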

\begin{proof}
Using the decomposition $f=\sum_{k=0}^{+\infty}\sum_{j=0}^{+\infty}R_kP_jf$ and Lemma \ref{sob1} gives that
\begin{align*}
(1+t)|f(t,\cdot)|&\leq C\|\partial_tP_0f\|_{\dot{H}^{-1}(\mathbb R^3)}+C\sum_{j=0}^{+\infty}2^{\frac j2}\|P_jZ^{\leq1}f\|_{L^2(\mathbb R^3)}
\\&\quad+C\sum_{j=0}^{+\infty}\sum_{k=1}^{+\infty}2^{\frac{j}2}\sum_{k_1=k-1}^{k+1}\sum_{j_1=j-1}^{j+1}\|P_{j_1}R_{k_1}Z^{\leq1}f\|_{L^2(\mathbb R^3)},
\end{align*}
where the last term on the right-hand side of the above inequality can be bounded by
\begin{align*}
&\quad C\sum_{j=0}^{+\infty}\sum_{k=0}^{j+19}2^{\frac{j}2}\|P_{j}R_{k}Z^{\leq1}f\|_{L^2(\mathbb R^3)}+C\sum_{j=0}^{+\infty}\sum_{k=j+20}^{+\infty}2^{\frac{j}2}\|P_{j}R_{k}Z^{\leq1}f\|_{L^2(\mathbb R^3)}.
\end{align*}
Using Cauchy-Schwarz inequality and the estimates
\begin{align*}
\sum_{j=0}^{+\infty}(2^j)^{-\varepsilon_1}=\frac{1}{1-2^{-\varepsilon_1}},\quad \sum_{k=0}^{+\infty}(2^k)^{-\varepsilon_2}=\frac{1}{1-2^{-\varepsilon_2}},\ \sum_{j=0}^{+\infty}(j+20)^2(2^{-j})^{\varepsilon_1}\leq C\frac{1}{(1-2^{-\varepsilon_1})^3},
\end{align*}
we conclude the proof.
\end{proof}

\section{\textbf{Proof of Theorem \ref{main_theorem}}}\label{sec3}
In this section, we give the proof of theorem \ref{main_theorem}. Using the bootstrap assumption \eqref{bootstrap1}, Lemma \ref{sob1} and Lemma \ref{sob_lemma} yields the following point-wise $L^{\infty}$ estimates 
\begin{align}\label{small1}
&(1+t)\|\partial^{\leq2} u\|_{L^{\infty}}\leq \frac{C\varepsilon}{\delta^{3/2}},
\\&\label{small2}(1+t)\|\partial^{\leq2} u_j\|_{L^{\infty}}\leq \frac{C\varepsilon}{\delta}(2^{j})^{-\delta}.
\end{align}
Detailed proofs of inequalities \eqref{small1} and \eqref{small2} are provided in Section \ref{sec_infi}.

\subsection{Higher-order energy estimates}\label{high_sec}
We now estimate $M_{high}(u(t))$.
\subsubsection{\textbf{Estimates for $M_{high1}(u(t))$}}
Applying Lemma \ref{LP_commu} yields $[\Box,P_j]u=0$. Hence
\begin{align}\label{xw5}
\Box u_j+u_j=[P_j,C^{i\alpha\gamma}\partial_{\gamma}u+C^{i\alpha}u]\partial_i\partial_{\alpha}u+(C^{i\alpha\gamma}\partial_{\gamma}u+C^{i\alpha}u)\partial_i\partial_{\alpha}u_j+P_j(H(u)).
\end{align}
Multiplying both sides of equality \eqref{xw5} by $\partial_tu_j$, integrating over $\mathbb R^3$ and integration by parts gives
\begin{align*}
&\quad\frac{d}{dt}\Big(\frac12\|\partial^{\leq1}u_j\|^2_{L^2(\mathbb R^3)}+\frac12\int \left(C^{i\alpha\gamma}\partial_{\gamma}u+C^{i\alpha}u\right)\partial_i u_j\partial_{\alpha} u_jdx\Big)
\\&\leq -\int \left(C^{i\alpha\gamma}\partial_{i}\partial_{\gamma}u+C^{i\alpha}\partial_{i}u\right)\partial_{\alpha}u_j\partial_tu_jdx+\frac12\int \left(C^{i\alpha\gamma}\partial_{t}\partial_{\gamma}u+C^{i\alpha}\partial_{t}u\right)\partial_{i}u_j\partial_{\alpha}u_jdx
\nonumber\\&\qquad+\int\left([P_j,C^{i\alpha\gamma}\partial_{\gamma}u+C^{i\alpha} u]\partial_{i}\partial_{\alpha}u\right)\partial_tu_jdx+\int P_j(H(u))\partial_tu_jdx,
\end{align*}
It follows from inequality \eqref{small1} and Lemma \ref{para-product2} that
\begin{align}\label{aa2}
M_{high1}(u(T))\leq& CM_{high1}(u(0))+\frac{C\varepsilon}{\delta^3}\int_0^T(1+t)^{-1}M_{high1}(u(t))dt+I_1+I_2+I_3+I_4,
\end{align}
where 
\begin{align*}
&I_1=C\int_0^T\sum_{j=0}^{+\infty}(2^j)^{5+64\delta}\left|\int\left(\sum_{l=0}^{+\infty}\sum_{m=0}^{l+2} P_j\left(\left(P_{m}\partial_i \partial u\right)P_l\partial^{\leq1}u\right)\right)\partial_tu_jdx\right|dt,
\\&I_2=C\int_0^T\sum_{j=0}^{+\infty}(2^j)^{5+64\delta}\left|\int\left(\sum_{l=0}^{+\infty}\left(P_{\leq l+2}P_j\partial_i \partial u\right)P_l\partial^{\leq1}u\right)\partial_tu_jdx\right|dt,
\\&I_3=C\int_0^T\sum_{j=0}^{+\infty}(2^j)^{4+64\delta}\left|\int\left(\sum_{l=j-2}^{j+2}L(P_{\leq l-3}\partial^{\leq1}u,P_l\partial_i \partial u)\right)\partial_tu_jdx\right|dt,
\\&I_4=C\sum_{j=0}^{+\infty}(2^j)^{5+64\delta}\int_0^T\int\left|P_j(H(u))\partial_tu_j\right|dxdt.
\end{align*} 

We begin with the term $I_1$. Identity \eqref{van3} yields that a necessary condition for the non-vanishing of $\sum_{m=0}^{l+2}P_j\left(\left(P_{m}\partial_i \partial u\right)P_l\partial^{\leq1} u\right)$ is $j\leq l+4$. Observing the decomposition $\sum_{m=0}^{l+2}=\sum_{m=l-3}^{l+2}+\sum_{m=0}^{l-4}$, we find that the second sum is nonzero only if $l-2\leq j\leq l+2$. Hence we deduce that
\begin{align*}
I_1&\leq C\int_0^T\sum_{j=0}^{+\infty}\sum_{l=j-4}^{+\infty}\sum_{m=l-3}^{l+2}(2^{l})^{\frac 52+32\delta}\left(\|P_m\partial\nabla u\|_{L^{\infty}}\|P_l\partial^{\leq1} u\|_{L^2}\right)\left((2^{j})^{\frac 52+32\delta}\|\partial_tu_j\|_{L^2}\right)dt
\\&\quad+C\int_0^T\sum_{j=0}^{+\infty}\sum_{l=j-2}^{j+2}(2^{l})^{\frac 52+32\delta}\left(\left\|\sum_{m=0}^{l-4} P_m\partial\nabla u\right\|_{L^{\infty}} \|P_l\partial^{\leq1} u\|_{_{L^2}}\right)\left((2^{j})^{\frac 52+32\delta}\|\partial_tu_j\|_{L^2}\right)dt.
\end{align*}
Applying \eqref{small2} yields that$(1+t)\|P_m\partial\nabla u\|_{L^{\infty}(\mathbb R^3)}\leq \frac{C\varepsilon}{\delta}(2^m)^{-\delta}$. For any $\epsilon$ in $(0,1)$, we get
\[
      \sum_{l=j-4}^{+\infty}(2^l)^{-\epsilon}\leq \frac{C}{1-2^{-\epsilon}}(2^j)^{-\epsilon}\leq \frac{C2^{\epsilon}}{2^{\epsilon}-1}(2^j)^{-\epsilon}\leq \frac{C}{\epsilon}(2^j)^{-\epsilon}.
\]
Consequently, we obtain
\begin{align}\label{I1_Bound}
I_1\leq \frac{C\varepsilon}{\delta^3} \int_0^T (1+t)^{-1}M_{high1}(u(t))dt.
\end{align}

The term $I_2$ can be handled analogously to $I_1$. Identity \eqref{van1} shows that the non-vanishing of $P_{\leq l+2}P_j\partial_ig$ requires $l\geq j-4$. Since 
\begin{align*}
\sum_{l=j-4}^{+\infty}\|(P_{\leq l+2}P_j\partial_i\partial u)P_l\partial^{\leq1} u\|_{L^2}&\leq \frac{C\varepsilon}{\delta(1+t)}(2^j)^{-\delta}\left(\sum_{l=j-4}^{+\infty}(2^{-l})^{5+64\delta}\right)^{\frac12}\left(\sum_{l=j-4}^{+\infty}(2^{l})^{5+64\delta}\|P_l\partial^{\leq1}u\|^2_{L^2}\right)^{\frac12}
\\&\leq \frac{C\varepsilon}{\delta}(1+t)^{-1}(2^{-j})^{\frac52+33\delta}\left(M_{high1}(u(t))\right)^{1/2},
\end{align*}
then terms $I_2$ have the same bound as those for terms $I_1$ in \eqref{I1_Bound}.

Now let's deal with terms $I_3$. By H\"older inequality, it suffices to estimate
\begin{align*}
&\quad(2^j)^{\frac 32+32\delta}\sum_{l=j-2}^{j+2}\left\|\int_0^1\int_{\mathbb R^3}\check{\chi}(2^jy)(2^j)^3(2^jy)\cdot\left(\nabla P_{\leq l-3}\partial^{\leq1} u\right)(x-sy)\left(P_l\nabla\partial u\right)(x-y)dyds\right\|_{L_x^2}
\\&\leq C(2^j)^{\frac 32+32\delta}\sum_{l=j-2}^{j+2}\left\|\nabla P_{\leq l-3}\partial^{\leq1} u\right\|_{L^{\infty}}\left \|P_l\nabla\partial  u\right\|_{L^2}\int_0^1\int_{\mathbb R^3}\left|\check{\chi}(2^jy)(2^j)^3(2^jy)\right|dyds
\\&\leq C\sum_{l=j-2}^{j+2}(2^l)^{\frac 52+32\delta}\left\|\nabla P_{\leq l-3}\partial^{\leq1} u\right\|_{L^{\infty}}\left\|P_l\partial u\right\|_{L^2}.
\end{align*}
Using \eqref{small1} shows that terms $I_3$ have the same bound as terms $I_1$ given in \eqref{I1_Bound}.

Finally, we estimate the remaining term $I_4$. Using $P_j(H(u))=\sum_{l,m}P_j(\partial^{\leq1}u_l\partial^{\leq1} u_m)$, we split the summation into three cases according to which index among $j,l,m$ attains the minimum: $j=\min\{j,l,m\}$, $l=\min\{j,l,m\}$ or $m=\min\{j,l,m\}$. Lemma \ref{LP2} then yields the desired estimate.

From the above discussion, it follows that
\begin{align}\label{high1}
M_{high1}(u(T))\leq &CM_{high1}(u(0))+\int \frac{C\varepsilon}{\delta^3}(1+t)^{-1}M_{high}(u(t))dt.
\end{align}

\subsubsection{\textbf{Estimates for $M_{high3}(u(t))$}}

Applying $Z$ to equation \eqref{NLG1} gives
\begin{align}\label{aa1}
\Box (Zu)_j+(Zu)_j
=&P_j\left((C^{i\alpha\gamma}\partial_{\gamma}u+C^{i\alpha}u)\partial_{\alpha}\partial_iZu\right)+P_j\left((C^{i\alpha\gamma}\partial_{\gamma}Zu+C^{i\alpha}Zu)\partial_{\alpha}\partial_iu\right)
\nonumber\\&+P_j\left((\overline{C}^{\alpha\beta\gamma}\partial_{\gamma}u+\overline{C}^{\alpha\beta}u)\partial_{\alpha}\partial_{\beta}u\right)+P_j(Z(H(u))),
\end{align}
with constants $\overline{C}^{\alpha\beta\gamma}$ and $\overline{C}^{\alpha\beta}$. Multiplying \eqref{aa1} by $\partial_t(Zu)_j$ and integrating over $[0,T]\times\mathbb R^3$, analogously to \eqref{aa2}, we obtain,
\begin{align}\label{aa3}
M_{high3}(u(T))\leq &CM_{high3}(u(0))+\frac{C\varepsilon}{\delta^3}\int(1+t)^{-1}M_{high3}(u(t))dt+II_1+II_2+II_3+II_4,
\end{align}
where
\begin{align*}
&II_1=C\sum_{j=0}^{+\infty}(2^j)^{3+54\delta}\int_0^T\int\left([P_j,C^{i\alpha\gamma}\partial_{\gamma}u+C^{i\alpha}u]\partial_{i}\partial_{\alpha}Zu\right)\partial_t(Zu)_jdxdt,
\\&II_2=C\sum_{j=0}^{+\infty}(2^j)^{3+54\delta}\int_0^T\int\left|P_j\left(\partial^{\leq1} Zu\partial\nabla u\right)\partial_t(Zu)_j\right|dxdt,
\\&II_3=C\sum_{j=0}^{+\infty}(2^j)^{3+54\delta}\int_0^T\int\left|P_j\left(\partial^{\leq1} u\partial^2u\right)\partial_t(Zu)_j\right|dxdt,
\\&II_4=C\sum_{j=0}^{+\infty}(2^j)^{3+54\delta}\int_0^T\int\left|P_j\left(Z(\partial^{\leq1}u\partial^{\leq1} u)\right)\partial_t(Zu)_j\right|dxdt.
\end{align*}
We first consider terms $II_1$. Applying Lemma \ref{para-product2} gives
\[
II_1\leq II_{1,1}+II_{1,2}+II_{1,3},
\]
where
\begin{align*}
&II_{1,1}=C\int_0^T\sum_{j=0}^{+\infty}(2^j)^{3+54\delta}\left|\int\left(\sum_{l=0}^{+\infty}\sum_{m=0}^{l+2} P_j\left(\left(P_{m}\partial_i \partial Zu\right)P_l\partial^{\leq1}u\right)\right)\partial_t(Zu)_jdx\right|dt,
\\&II_{1,2}=C\int_0^T\sum_{j=0}^{+\infty}(2^j)^{3+54\delta}\left|\int\left(\sum_{l=0}^{+\infty}\left(P_{\leq l+2}P_j\partial_i \partial Zu\right)P_l\partial^{\leq1}u\right)\partial_t(Zu)_jdx\right|dt,
\\&II_{1,3}=C\int_0^T\sum_{j=0}^{+\infty}(2^j)^{2+54\delta}\left|\int\left(\sum_{l=j-2}^{j+2}L(P_{\leq l-3}\partial^{\leq1}u,P_l\partial_i \partial Zu)\right)\partial_t(Zu)_jdx\right|dt.
\end{align*}

Using the identities $\partial_t^2Zu=\Delta Zu-Zu+Z(\Box u+u)$, 
\begin{align}\label{space_div}
(1+t)\partial_i f=L_if-x^i\partial_tf+\partial_if,
\end{align}
Bernstein's inequality, Lemma \ref{LP_commu} and Lemma \ref{radial_decay}, we have
\begin{align*}
   (1+t)\|P_m\partial\nabla Zu\|_{L^{\infty}}
   &\leq C\|LP_m\partial Zu\|_{L^{\infty}}+\|\langle r\rangle P_m\partial^2 Zu\|_{L^{\infty}}
   \\&\leq \frac{C}{\delta}(2^{m})^{1-20\delta}\left(M_{high5}(u(t))+M_{high4}(u(t))+M_{high3}(u(t))\right)^{1/2}.
\end{align*}
By using a similar fashion of discussion to $I_1$ and the above inequality, we get
\begin{align*}
&\quad II_{1,1}
\\&\leq C\int_0^T\sum_{j=0}^{+\infty}\sum_{l=j-4}^{+\infty}\sum_{m=l-3}^{l+2}(2^{l})^{\frac 32+27\delta}\left(\|P_m\partial\nabla Zu\|_{L^{\infty}}\|P_l\partial^{\leq1} u\|_{L^2}\right)\left((2^{j})^{\frac 32+27\delta}\|\partial_t(Zu)_j\|_{L^2}\right)dt
\\&\quad+C\int_0^T\sum_{j=0}^{+\infty}\sum_{l=j-2}^{j+2}(2^{l})^{\frac 32+27\delta}\left(\left\|\sum_{m=0}^{l-4} P_m\partial\nabla Zu\right\|_{L^{\infty}} \|P_l\partial^{\leq1} u\|_{_{L^2}}\right)\left((2^{j})^{\frac 32+27\delta}\|\partial_t(Zu)_j\|_{L^2}\right)dt
\\&\leq \frac{C}{\delta^3}\int_0^T(1+t)^{-1}M_{high}(u(t))\left(M_{low1}(u(t))\right)^{\frac12}dt.
\end{align*}

Terms $II_{1,2}$ is similar to $II_{1,1}$ and we obtain
\begin{align*}
II_{1,2}&\leq C\int_0^T\left(\sum_{j=0}^{+\infty}\left((2^j)^{\frac32+27\delta}\sum_{l=j-4}^{+\infty}\|P_{\leq l+2}P_j\partial\nabla Zu\|_{L^2}\|P_l\partial^{\leq1} u\|_{L^{\infty}}\right)^2\right)^{\frac12}\left(M_{high3}(u(t))\right)^{\frac12}dt
\\&\leq \frac{C\varepsilon}{\delta^2}\int_0^T(1+t)^{-1}M_{high3}(u(t))dt.
\end{align*}

Now we arrive at terms $II_{1,3}$. As in the case of $I_3$,
\begin{align*}
II_{1,3}&\leq C\int_0^T\sum_{j=0}^{+\infty}(2^j)^{\frac 12+27\delta}\sum_{l=j-2}^{j+2}\left\|P_{\leq l-3}\partial^{\leq1}\nabla u \right\|_{L^{\infty}}\left\|P_l\partial\nabla  Z u\right\|_{L^2}(2^j)^{\frac32+27\delta}\|\partial_t(Zu)_j\|_{L^2}dt
\\&\leq \frac{C\varepsilon}{\delta^3}\int_0^T(1+t)^{-1}M_{high3}(u(t))dt.
\end{align*}

Now let's deal with terms $II_2$ as follows
\[
     II_2=C\sum_{j=0}^{+\infty}(2^j)^{3+54\delta}\int_0^T\int\left|\sum_{l,m=0}^{+\infty}P_j\left((\partial^{\leq1} Zu)_l(\partial\nabla u)_m\right)\partial_t(Zu)_j\right|dxdt.
\]
We consider the following three cases: \textbf{Case 1}: $l=\min\{j,l,m\}$; \textbf{Case 2}: $m=\min\{j,l,m\}$; \textbf{Case 3}: $j=\min\{j,l,m\}$. 

For \textbf{Case 3}, using Lemma \ref{LP2} and inequality \eqref{small2} shows that 
\begin{align*}
  II_2\leq&C\int_0^T\left(M_{high3}(u(t))\right)^{\frac12}\left(\sum_{j=0}^{+\infty}\left(\sum_{l=j}^{+\infty}\sum_{m=l-2}^{l+2}(2^l)^{\frac32+27\delta}\left\|\partial^{\leq1}(Zu)_l\right\|_{L^2}\left\|\partial\nabla u_m\right\|_{L^{\infty}}\right)^{2}\right)^{\frac12}dt
\\ \leq &\frac{C\varepsilon}{\delta^3}\int_0^T(1+t)^{-1}M_{high3}(u(t))dt.
\end{align*}

For \textbf{case 2}, using  Lemma \ref{LP2} and inequality \eqref{small1}, we get
\begin{align*}
II_2\leq\frac{C\varepsilon}{\delta^2}\int(1+t)^{-1}M_{high3}(u(t))dt.
\end{align*}

For \textbf{case 1}, using  Lemma \ref{LP2} and the identity \eqref{space_div} yields that
\begin{align}\label{difficulty1}
&II_2
\nonumber\\ \leq& C\sum_{j=0}^{+\infty}\sum_{m=j-2}^{j+2}\int_0^T(1+t)^{-1}\|P_{\leq j}\partial^{\leq1} Zu\|_{L^{\infty}}\left((2^{m})^{\frac32+27\delta}\|L(\partial u)_m\|_{L^2}\right)\left((2^j)^{\frac32+27\delta}\|\partial_t(Zu)_j\|_{L^2}\right)dt
\nonumber\\&+ C\sum_{j=0}^{+\infty}\sum_{m=j-2}^{j+2}\int_0^T(1+t)^{-1}\|\langle x\rangle P_{\leq j}\partial^{\leq1} Zu\|_{L^{\infty}}\left((2^{m})^{\frac32+27\delta}\|\partial^2u_m\|_{L^2}\right)\left((2^j)^{\frac32+27\delta}\|\partial_t(Zu)_j\|_{L^2}\right)dt
\nonumber\\ \leq &\frac{C\varepsilon}{\delta^2}\int(1+t)^{-1}\left(M_{high4}(u(t))+M_{high3}(u(t))\right)dt,
\end{align}
where we used Bernstein's inequality and lemma \ref{radial_decay} to get the following estimates
\begin{align*}
&\|P_{\leq j}\partial^{\leq1} Zu\|_{L^{\infty}}\leq \frac{C}{\delta}(M_{low3}(u(t)))^{\frac12},
\\&\|\langle x\rangle P_{\leq j}\partial^{\leq1} Zu\|_{L^{\infty}}\leq \frac{C}{\delta^2}(M_{high3}(u(t))+M_{high4}(u(t)))^{\frac12}.
\end{align*}

The estimates for terms $II_3$ follow analogously to that for $II_2$ and are even simpler. For terms $II_4$, we need to consider 
\[
\sum_{l,m=0}^{+\infty}P_j\left(P_l(\partial^{\leq1}u)P_m(\partial^{\leq1}Z^{\leq1} u)\right).
\]
The cases $j\leq20$ or $l=\min\{l,m,j\}$ or $j=\min\{l,m,j\}$ can be easily treated. When $j\geq21$ and $m=\min\{l,m,j\}$, using  $P_l(\partial^{\leq1}u)=\nabla\cdot w$, where $w^i$ be the solution of
$\Delta w^i=\partial_iP_l(\partial^{\leq1}u)$, applying identity \eqref{space_div} to $\nabla\cdot w$ and employing Bernstein's inequality, Lemma \ref{radial_decay} shows that this case is fine. Hence
\begin{align*}
II_3+II_4\leq \frac{C\varepsilon}{\delta^3}\int(1+t)^{-1}M_{high}(u(t))dt.
\end{align*}

Collecting the preceding estimates, we conclude that
\begin{align}\label{high2}
M_{high3}(u(T))\leq &CM_{high3}(u(0))+\frac{C\varepsilon}{\delta^3}\int (1+t)^{-1}M_{high}(u(t))dt.
\end{align}

\subsubsection{\textbf{Estimates for $M_{high5}(u(t))$}}In this paper, $\mathcal{O}(1)$ means a generic constant which may vary from line to line and let
\begin{align}\label{sum_syb}
\sum_{j_1\thicksim j_2\backsim j}=\sum_{j_1=j}^{+\infty}\sum_{j_2=j_1-2}^{j_1+2}+\sum_{j_1=0}^{j}\sum_{j_2=j-2}^{j+2}+\sum_{j_2=0}^{j}\sum_{j_1=j-2}^{j+2}.
\end{align}
Applying $Z^2$ to equation \eqref{NLG1}, we have
\begin{align}
\label{NLG3}&\Box Z^2u+Z^2u=\left(C^{i\alpha\gamma}\partial_{\gamma}u+C^{i\alpha}u\right)\partial_{\alpha}\partial_iZ^2u+\mathfrak{R},
\end{align}
where
\begin{align*}
\mathfrak{R}=\sum_{\substack{|a|+|b|\leq 2\\|b|\leq 1}}\left(C_{ab}^{\alpha\beta\gamma}\partial_{\gamma}Z^au+C_{ab}^{\alpha\beta}Z^au\right)\partial_{\alpha}\partial_{\beta}Z^bu+Z^2(H(u)).
\end{align*}
Here $C_{ab}^{\alpha\beta\gamma},C_{ab}^{\alpha\beta}$ are constants. Thus
\begin{align}\label{NLG_LP1}
&\Box (Z^2u)_j+(Z^2u)_j
\nonumber\\=&\left(C^{i\alpha\gamma}\partial_{\gamma}u+C^{i\alpha}u\right)\partial_{i}\partial_{\alpha}(Z^2u)_j+[P_j,C^{i\alpha\gamma}\partial_{\gamma}u+C^{i\alpha}u]\partial_{i}\partial_{\alpha}Z^2u+P_j(\mathfrak{R}).
\end{align}
The energy estimates for \eqref{NLG_LP1} are obtained by multiplying the equation by $\partial_t(Z^2u)_j$, integrating over $\mathbb R^3$, integrating by parts, and employing \eqref{small1}; this gives
\begin{align}
M_{high5}(u(T))\leq CM_{high5}(u(0))+\frac{C\varepsilon}{\delta^3}\int_0^T(1+t)^{-1}M_{high5}(u(t))dt+III_1+III_2,
\end{align}
where
\begin{align*}
&III_1=\sum_{j=0}^{+\infty}(2^j)^{1+42\delta}\int_0^T\int\left([P_j,C^{i\alpha\gamma}\partial_{\gamma}u+C^{i\alpha}u]\partial_{i}\partial_{\alpha}Z^2u\right)\partial_t(Z^2u)_jdxdt,
\\&III_2=\sum_{j=0}^{+\infty}(2^j)^{1+42\delta}\int_0^T\int P_j(\mathfrak{R})\partial_t(Z^{2}u)_jdxdt.
\end{align*}

Now it's time to deal with terms $III_1$. By using lemma \ref{para-product2}, 
\[
III_{1}\leq III_{1,1}+III_{1,2}+III_{1,3}, 
\]
where
\begin{align*}
&III_{1,1}=\int_0^T\sum_{j=0}^{+\infty}(2^j)^{1+42\delta}\left|\int\left(\sum_{l=0}^{+\infty} P_j\left(\left(P_{\leq l+2}\partial_i \partial Z^2u\right)P_l\partial^{\leq1}u\right)\right)\partial_t(Z^2u)_jdx\right|dt,
\\&III_{1,2}=\int_0^T\sum_{j=0}^{+\infty}(2^j)^{1+42\delta}\left|\int\left(\sum_{l=0}^{+\infty}\left(P_{\leq l+2}P_j\partial_i \partial Z^2u\right)P_l\partial^{\leq1}u\right)\partial_t(Z^2u)_jdx\right|dt,
\\&III_{1,3}=\int_0^T\sum_{j=0}^{+\infty}(2^j)^{42\delta}\left|\int\left(\sum_{l=j-2}^{j+2}L(P_{\leq l-3}\partial^{\leq1}u,P_l\partial_i \partial Z^2u)\right)\partial_t(Z^2u)_jdx\right|dt.
\end{align*} 

The estimate $\left(\sum_{m=0}^{l-4}\Big((2^m)^{\frac12-21\delta}\Big)^2\right)^{1/2}\leq C(2^l)^{\frac12-21\delta}$ implies
\[
\left\|P_{\leq l-4}\partial\nabla Z^2u\right\|_{L^2}\leq C(2^l)^{\frac12-21\delta}\Big(M_{high5}(u(t))\Big)^{1/2},
\]
which combining a similar fashion of discussion to $I_1,II_{1,1}$ gives
\begin{align*}
III_{1,1}&\leq C\int_0^T\sum_{j=0}^{+\infty}\sum_{l=j-4}^{+\infty}\sum_{m=l-3}^{l+2}(2^{l})^{\frac 12+21\delta}\left(\|P_m\partial\nabla Z^2u\|_{L^2}\|P_l\partial^{\leq1} u\|_{L^{\infty}}\right)\left((2^{j})^{\frac 12+21\delta}\|\partial_t(Z^2u)_j\|_{L^2}\right)dt
\\&\quad+C\int_0^T\sum_{j=0}^{+\infty}\sum_{l=j-2}^{j+2}(2^{l})^{\frac 12+21\delta}\left(\left\| P_{\leq l-4}\partial\nabla Z^2u\right\|_{L^2} \|P_l\partial^{\leq1} u\|_{_{L^{\infty}}}\right)\left((2^{j})^{\frac 12+21\delta}\|\partial_t(Z^2u)_j\|_{L^2}\right)dt
\\&\leq \frac{C\varepsilon}{\delta^3}\int_0^T(1+t)^{-1}M_{high}(u(t))dt.
\end{align*}

The term $III_{1,2}$ is similar to terms $I_2$ and $III_{1,1}$ and we obtain
\begin{align*}
III_{1,2}&\leq C\int_0^T\sum_{j=0}^{+\infty}(2^j)^{1+42\delta}\sum_{l=j-4}^{+\infty}\|P_j\partial\nabla Z^2u\|_{L^2}\|P_l\partial^{\leq1} u\|_{L^{\infty}}\|\partial_t(Z^2u)_j\|_{L^2}dt
\\&\leq \frac{C\varepsilon}{\delta^3}\int_0^T(1+t)^{-1}M_{high5}(u(t))dt.
\end{align*}

Now we consider the term $III_{1,3}$. Proceeding similarly to the estimates for $I_3,II_{1,3}$,
\begin{align*}
III_{1,3}&\leq C\int_0^T\sum_{j=0}^{+\infty}(2^j)^{-\frac 12+21\delta}\sum_{l=j-2}^{j+2}\left\|P_{\leq l-3}\partial^{\leq1}\nabla u \right\|_{L^{\infty}}\left\|P_l\partial\nabla  Z^2 u\right\|_{L^2}(2^j)^{\frac12+21\delta}\|\partial_t(Z^2u)_j\|_{L^2}dt
\\&\leq \frac{C\varepsilon}{\delta^3}\int_0^T(1+t)^{-1}M_{high5}(u(t))dt.
\end{align*}

We next turn to the term $III_2$. It suffices to control
\begin{align*}
&III_{2,1}=\sum_{\substack{|a|+|b|\leq2\\|b|\leq1}}\sum_{j,l,m=0}^{+\infty}(2^j)^{1+42\delta}\int P_j\left(\partial^{\leq1}(Z^au)_l\partial^2(Z^bu)_m\right)\partial_t(Z^{2}u)_jdx,
\\&III_{2,2}=\sum_{|a|+|b|\leq2}\sum_{j,l,m=0}^{+\infty}(2^j)^{1+42\delta}\int P_j\left(\partial^{\leq1}(Z^au)_l\partial^{\leq1}(Z^bu)_m\right)\partial_t(Z^{2}u)_jdx.
\end{align*}

For terms $III_{2,2}$, the cases $|a|=0$ or $|b|=0$ are simple. When $|a|=1,|b|=1$, if $j\geq20$ and $l\geq m$, using $P_l(\partial^{\leq1}Zu)=\nabla\cdot w$, where $w^i$ be the solution of
$\Delta w^i=\partial_iP_l(\partial^{\leq1}Zu)$, and a similar fashion of discussion on $II_4$, yields that it's acceptable. When $j\leq20$, if $l\geq20$ or $m\geq20$, the discussion on $II_4$ also applies. Then it suffices to estimate
\begin{align}\label{term1}
\frac{1+t}{\varepsilon}\sum_{l\leq 20}\|\partial^{\leq1}(Zu)_l\|_{L^4}^4.
\end{align}
Using $t\widetilde{\Omega}_{ij}=x^iL_j-x^jL_i$ shows that the cases $Z=\partial$ or $Z=\Omega$ are fine. By calculation, we get
\begin{align*}
\int(\partial^{\leq1}(Lu)_l)^4dx&\leq \int (L\partial^{\leq1}u_l)(\partial^{\leq1}(Lu)_l)^3dx+\frac{C\varepsilon}{\delta}(1+t)^{-1}(M_{high}(u(t)))^{\frac12}\left(\int(\partial^{\leq1}(Lu)_l)^4dx\right)^{\frac12}
\\&\leq \frac{d}{dt}\int (x\partial^{\leq1}u_l)(\partial^{\leq1}(Lu)_l)^3dx+\frac{C\varepsilon}{\delta(1+t)}(M_{high}(u(t)))^{\frac12}\left(\int(\partial^{\leq1}(Lu)_l)^4dx\right)^{\frac12},
\end{align*}
which combining Young's inequality and the identity $\frac{1+t}{\varepsilon}\frac{d}{dt}f(t)=\frac{d}{dt}\left(\frac{1+t}{\varepsilon}f(t)\right)-\frac{f(t)}{\varepsilon}$ yields that terms \eqref{term1} are admissible.

For terms $III_{2,1}$, similarly to $II_2$, we consider the following three cases: \textbf{Case 1}: $l=\min\{j,l,m\}$; \textbf{Case 2}: $m=\min\{j,l,m\}$; \textbf{Case 3}: $j=\min\{j,l,m\}$. Now we arrive at \textbf{Case 3} and we only need to deal with
\[
   C\left(M_{high5}(u(t))\right)^{1/2}\left(\sum_{j=0}^{+\infty}\left(\sum_{l=j}^{+\infty}\sum_{m=l-2}^{l+2}(2^j)^{\frac 12+21\delta}\left\|\partial^{\leq1}(Z^au)_l\partial^2(Z^bu)_m\right\|_{L^2(\mathbb R^3)}\right)^{2}\right)^{1/2}.
\]
Observe that $2^j\leq 2^m$ and $2^j\leq 2^l$. When $|a|=2$, applying \eqref{small2} yields 
\begin{align*}
\sum_{j=0}^{+\infty}\left(\sum_{l=j}^{+\infty}\sum_{m=l-2}^{l+2}(2^l)^{\frac12+21\delta}\left\|\partial^{\leq1}(Z^2u)_l\right\|_{L^2(\mathbb R^3)}\left\|\partial^2u_m\right\|_{L^{\infty}(\mathbb R^3)}\right)^{2}
 \leq \frac{C\varepsilon^2}{\delta^6}(1+t)^{-2}M_{high5}(u(t)).
\end{align*}
Otherwise, when $|a|,|b|\leq1$, the cases where $|a|=0$ or $|b|=0$ are straightforward as in Case 3 of $II_2$; it therefore suffices to assume $|a|=|b|=1$.
In this case, $\partial^2(Zu)_m=\partial\nabla(Zu)_m$. Using identity \eqref{space_div} gives
\begin{align*}
&\sum_{j=0}^{+\infty}\left(\sum_{l=j}^{+\infty}\sum_{m=l-2}^{l+2}(2^l)^{\frac 12+21\delta}\left\|\partial^{\leq1}(Zu)_l\partial\nabla(Zu)_m\right\|_{L^2(\mathbb R^3)}\right)^2
\\ \leq& \frac{C}{(1+t)^2}\sum_{j=0}^{+\infty}\left(\sum_{l=j}^{+\infty}\sum_{m=l-2}^{l+2}(2^l)^{\frac 12+21\delta}\Big(\left\|\partial^{\leq1}(Zu)_lL\partial(Zu)_m\right\|_{L^2(\mathbb R^3)}
+\left\|\langle x\rangle\partial^{\leq1}(Zu)_l\partial^2(Zu)_m\right\|_{L^2(\mathbb R^3)}\Big)\right)^2,
\end{align*}
which combining Bernstein's inequality and Lemma \ref{radial_decay} yields that in \textbf{Case 3},
\begin{align}\label{bound1}
III_{2,1}\leq \frac{C\varepsilon}{\delta^3}(1+t)^{-1}M_{high}(u(t)).
\end{align}
For \textbf{Case 1}, it suffices to estimate
\begin{align*}
&\sum_{j=0}^{+\infty}\sum_{m=j-2}^{j+2}(2^{j})^{1+42\delta}\left|\int P_j\left((P_{\leq j}\partial^{\leq1} Z^au)(\partial^2Z^bu)_m\right)(\partial_tZ^2u)_jdx\right|.
\end{align*}
When $|a|=2$, by using Bernstein's inequality and Lemma \ref{product_radial}, we get
\begin{align*}
\left\|(P_{j_1}\partial^{\leq1} Z^2u)Z\partial u_m\right\|_{L^2}&\leq C\left\|P_{j_1}\partial^{\leq1} Z^2u\right\|_{L^{\infty}}\left\|Z\partial u_m\right\|_{L^2}
\leq C(2^{j_1})^{\frac32}\left\|P_{j_1}\partial^{\leq1} Z^2u\right\|_{L^{2}}\left\|Z\partial u_m\right\|_{L^2},
\\ \left\|(P_{j_1}\partial^{\leq1} Z^2u)r\partial^2 u_m\right\|_{L^2}
&\leq C\left(\left\|(P_{j_1}\partial^{\leq1} Z^2u)r R_0(\partial^2 u_m)\right\|_{L^2}+\sum_{k=1}^{+\infty}\left\|(P_{j_1}\partial^{\leq1} Z^2u)r R_k(\partial^2 u_m)\right\|_{L^2}\right)
\\&\leq C\left(\left\|\partial^2 u_m\right\|_{L^2}+\sum_{k=1}^{+\infty}2^k\left\|R_k(\partial^2 u_m)\right\|_{L^2}\right)(2^{j_1})^{1/2}\left\|P_{j_1}\partial^{\leq1} Z^2u\right\|_{L^2},
\end{align*}
which combining \eqref{space_div} implies
\begin{align}\label{difficulty3}
&\quad\sum_{j=0}^{+\infty}\sum_{m=j-2}^{j+2}(2^{j})^{1+42\delta}\left|\int P_j\left((P_{\leq j}\partial^{\leq1} Z^2u)(\partial\nabla u)_m\right)(\partial_tZ^2u)_jdx\right|
\nonumber\\ &\leq C\Big(M_{high5}(u(t))\Big)^{\frac12}(1+t)^{-1}
\nonumber\\&\quad\times\left(\sum_{j=0}^{+\infty}\Big(\sum_{m=j-2}^{j+2}(2^{m})^{\frac12+21\delta}\sum_{j_1=0}^j\left(\left\|(P_{j_1}\partial^{\leq1} Z^2u)Z\partial u_m\right\|_{L^2}+\left\|(P_{j_1}\partial^{\leq1} Z^2u)r\partial^2 u_m\right\|_{L^2}\right)\Big)^2\right)^{\frac12}
\nonumber\\&\leq \frac{C\varepsilon}{\delta^3}(1+t)^{-1}M_{high5}(u(t)).
\end{align}
In the regime $|a|\leq 1$, it is enough to consider the nontrivial case $|a|=1,|b|=1$, as all other possibilities are immediate as in Case 1 of $II_4$. Invoking Lemma \ref{radial_decay} along with the identity \eqref{space_div} yields that the term $III_{2,1}$ is bounded by 
\begin{align}\label{difficulty2}
&\quad C\sum_{j=0}^{+\infty}\sum_{m=j-2}^{j+2}(2^{j})^{1+42\delta}\left|\int P_j\left((P_{\leq j}\partial^{\leq1} Zu)(\partial\nabla Zu)_m\right)(\partial_tZ^2u)_jdx\right|
\nonumber\\ &\leq C(1+t)^{-1}\left(M_{high5}(u(t))\right)^{\frac12}
\nonumber\\&\quad \times\left(\sum_{j=0}^{+\infty}\left(\sum_{m=j-2}^{j+2}(2^{m})^{\frac12+21\delta}\left(\|rP_{\leq j}\partial^{\leq1} Zu\|_{L^{\infty}}\|(\partial^2 Zu)_m\|_{L^2}+\|P_{\leq j}\partial^{\leq1} Zu\|_{L^{\infty}}\|Z(\partial Zu)_m\|_{L^2}\right)\right)^2\right)^{\frac12}
\nonumber\\&\leq \frac{C\varepsilon}{\delta^3}(1+t)^{-1}M_{high}(u(t)).
\end{align}
In \textbf{Case 2}, $m\leq l$. When $|a|=|b|=1$, the discussion in \textbf{Case 1} also applies. When $|a|=2$, we get
\begin{align*}
&\quad\sum_{j=0}^{+\infty}\sum_{l=j-2}^{j+2}(2^{j})^{1+42\delta}\left|\int P_j\left((\partial^{\leq1} (Z^2u)_l(P_{\leq j}\partial\nabla u)\right)(\partial_tZ^2u)_jdx\right|
\nonumber\\ &\leq \sum_{j=0}^{+\infty}\sum_{l=j-2}^{j+2}(2^{j})^{1+42\delta}\|(\partial_tZ^2u)_j\|_{L^2}\left\|\partial^{\leq1} (Z^2u)_l\right\|_{L^2}\left\|P_{\leq j}(\partial\nabla u)\right\|_{L^{\infty}}
\nonumber\\&\leq \frac{C\varepsilon}{\delta^3}(1+t)^{-1}M_{high5}(u(t)).
\end{align*}
When $|a|=0$ or $|b|=0$, the cases are simple. Therefore in \textbf{Case 2}, terms $III_{2,1}$ satisfy the same bounds as those given in \eqref{bound1}

Combing the discussion above, we have
\begin{align}\label{high5}
M_{high5}(u(T))\leq &CM_{high}(u(0))+\frac{C\varepsilon}{\delta^3} M_{high}(u(T))+\frac{C\varepsilon}{\delta^3}\int (1+t)^{-1}M_{high}(u(t))dt.
\end{align}

\subsubsection{\textbf{Estimates for $M_{high2}(u(t))$}}\label{high2_sec}
By Lemma \ref{ALP_commu} and noticing $\Box u=\partial_t^2u-\partial_r^2u-\frac2r\partial_ru-\frac1{r^2}(\Omega\cdot \Omega)u$, we get $[\Box,R_k]u=0$. Thus
\begin{align}\label{high3}
\Box u_{j,k}+u_{j,k}=R_kP_j\left((C^{i\alpha\gamma}\partial_{\gamma}u+C^{i\alpha}u)\partial_{i}\partial_{\alpha}u+H(u)\right).
\end{align}
We now perform energy estimates for equation \eqref{high3} with $k\geq j+20$ and $j\geq0$. Multiplying it by $\partial_tu_{j,k}$ and integrating over $[0,T]\times\mathbb R^3$ yields
\[
M_{high2}(u(T))\leq M_{high2}(u(0))+IV_1+IV_2+IV_3,
\]
where
\begin{align*}
&IV_1=\mathcal{O}(1)\sum_{j=0}^{+\infty}\sum_{k=j+20}^{+\infty}(2^{j})^{3+50\delta}(2^{k})^{2+2\delta}\int_0^T\int R_k\left(P_j(\partial^{\leq1}u\partial\nabla u)-\partial^{\leq1}u\partial\nabla P_ju\right)\partial_tu_{j,k}dxdt,
\\&IV_2=\mathcal{O}(1)\sum_{j=0}^{+\infty}\sum_{k=j+20}^{+\infty}(2^{j})^{3+50\delta}(2^{k})^{2+2\delta}\int_0^T\int R_k\left(\partial^{\leq1}u\partial\nabla P_ju\right)\partial_tu_{j,k}dxdt,
\\&IV_3=\sum_{j=0}^{+\infty}\sum_{k=j+20}^{+\infty}(2^{j})^{3+50\delta}(2^{k})^{2+2\delta}\int_0^T\int R_kP_j\left(H(u)\right)\partial_tu_{j,k}dxdt.
\end{align*}

In Section \ref{high2_sec}, let
\[
W_{j,k}^1=(2^{j})^{3+50\delta}(2^{k})^{2+2\delta},\quad \sum_{j,k}=\sum_{j=0}^{+\infty}\sum_{k=j+20}^{+\infty}.
\]
An application of Lemma \ref{ALP_ortho} yields 
\[
IV_1\leq IV_{1,1}+IV_{1,2}+IV_{1,3},
\]
where
\begin{align*}
&IV_{1,1}\leq C\sum_{j,k}W_{j,k}^1\sum_{k_2=k-1}^{k+1}\int_0^T\left\|P_j\left((R_{\leq k}\partial^{\leq1} u)(R_{k_2}\partial\nabla u)\right)-(R_{\leq k}\partial^{\leq1} u)(R_{k_2}\partial\nabla u_j)\right\|_{L^2}\|\partial_tu_{j,k}\|_{L^2}dt,
\\&IV_{1,2}\leq C\sum_{j,k}W_{j,k}^1\sum_{k_1=k-1}^{k+1}\int_0^T\left\|P_j\left((R_{k_1}\partial^{\leq1} u)(R_{\leq k}\partial\nabla u)\right)-(R_{k_1}\partial^{\leq1} u)(R_{\leq k}\partial\nabla u_j)\right\|_{L^2}\|\partial_tu_{j,k}\|_{L^2}dt,
\\&IV_{1,3}\leq C\sum_{j,k}W_{j,k}^1\sum_{k_1=k}^{+\infty}\sum_{k_2=k_1-1}^{k_1+1}\int_0^T\left\|P_j\left((R_{k_1}\partial^{\leq1} u)(R_{k_2}\partial\nabla u)\right)-(R_{k_1}\partial^{\leq1} u)(R_{k_2}\partial\nabla u_j)\right\|_{L^2}\|\partial_tu_{j,k}\|_{L^2}dt.
\end{align*}
For the term $IV_{1,1}$, applying Lemma \ref{para-product2} yields that 
\[
IV_{1,1}\leq IV_{1,1,1}+IV_{1,1,2}+IV_{1,1,3},
\]
where
\begin{align*}
&IV_{1,1,1}=C\sum_{j,k}W_{j,k}^1\sum_{k_2=k-1}^{k+1}\int_0^T\left\|\sum_{l=0}^{+\infty} P_j\left(\left(P_{\leq l+2}g\right)P_lf\right)\right\|_{L^2}\|\partial_tu_{j,k}\|_{L^2}dt,
\\&IV_{1,1,2}=C\sum_{j,k}W_{j,k}^1\sum_{k_2=k-1}^{k+1}\int_0^T\left\|\sum_{l=0}^{+\infty}\left(P_{\leq l+2}P_jg\right)P_lf\right\|_{L^2}\|\partial_tu_{j,k}\|_{L^2}dt,
\\&IV_{1,1,3}=C\sum_{j,k}W_{j,k}^12^{-j}\sum_{k_2=k-1}^{k+1}\int_0^T\left\|\sum_{l=j-2}^{j+2}L(P_{\leq l-3}f,P_lg)\right\|_{L^2}\|\partial_tu_{j,k}\|_{L^2}dt,
\end{align*} 
with 
\[
f=R_{\leq k}\partial^{\leq1} u,\quad g=R_{k_2}\partial\nabla u.
\]
We begin with terms $IV_{1,1,1}$. A necessary condition for the corresponding summation not to vanish is $j\leq l+4$. By Lemma \ref{xw1} and using the decomposition $\nabla=\frac{x}r\partial_r-\frac{x}{r^2}\wedge\Omega$, for any function $\bar{f}$,
\begin{align*}
\|P_{l_2}R_{k_3}\nabla\bar{f}\|_{L^2(\mathbb R^3)}=\|R_{k_3}\nabla P_{l_2}\bar{f}\|_{L^2}\leq C\sum_{k_4=k_3-1}^{k_3+1}\|\nabla R_{k_4} P_{l_2}\bar{f}\|_{L^2}
\leq C2^{l_2}\sum_{k_4=k_3-1}^{k_3+1}\| R_{k_4} P_{l_2}\bar{f}\|_{L^2}.
\end{align*}
Using Lemma \ref{sob1} and the above estimate then yields
\begin{align}\label{iv_111}
&\quad\|P_{\leq l+2}R_{k_2}\partial\nabla u\|_{L^{\infty}(\mathbb R^3)}
\nonumber\\&\leq C\sum_{l_1=0}^{l+2}\|P_{l_1}R_{k_2}\partial\nabla u\|_{L^{\infty}(\mathbb R^3)}
\nonumber\\&\leq C(1+t)^{-1}\sum_{l_1=0}^{l+2}\left((2^{l_1})^{\frac32}\|P_{l_1}R_{k_2}\partial\nabla u\|_{L^2}+\sum_{k_3=k_2-1}^{k_2+1}\sum_{l_2=l_1-1}^{l_1+1}\left((2^{l_1})^{1/2}\|P_{l_2}R_{k_3}Z^{\leq 1}\partial\nabla u\|_{L^{2}}\right)\right)
\nonumber\\&\leq \frac{C(2^l)^{1-20\delta}}{1+t}\left(\sum_{l_2=0}^{+\infty}\sum_{k_3=k_2-2}^{k_2+2}\left((2^{l_2})^{3+40\delta}\|P_{l_2}R_{k_3}\partial u\|^2_{L^2}+(2^{l_2})^{1+40\delta}\|P_{l_2}R_{k_3}\partial Z^{\leq 1} u\|^2_{L^{2}}\right)\right)^{\frac12}.
\end{align}
By using inequality \eqref{iv_111} and the estimate $\|R_{\leq k}\bar{f}\|_{L^2(\mathbb R^3)}\leq C\|\bar{f}\|_{L^2(\mathbb R^3)}$, we have
\begin{align*}
&\quad IV_{1,1,1}
\nonumber\\&\leq C\int_0^T\left(M_{high2}(u(t))\right)^{\frac12}\left(\sum_{j,k}W_{j,k}^1\sum_{k_2=k-1}^{k+1}\left(\sum_{l=j-4}^{+\infty}\left\|P_{\leq l+2}R_{k_2}\partial\nabla u\right\|_{L^{\infty}}\left\|R_{\leq k}P_l\partial^{\leq1} u\right\|_{L^2}\right)^2\right)^{\frac12}dt
\nonumber\\&\leq \int_0^T\frac{CM_{high}(u(t))}{1+t}\left(\sum_{j=0}^{+\infty}(2^j)^{3+50\delta}\left(\sum_{l=j-4}^{+\infty}(2^{l})^{1-20\delta}
\left\|P_l\partial^{\leq1} u\right\|_{L^2}\right)^2\right)^{\frac12}dt.
\end{align*}
Therefore 
\begin{align}\label{bound2}
IV_{1,1,1}\leq \frac{C\varepsilon}{\delta^3}\int_0^T(1+t)^{-1}M_{high}(u(t))dt.
\end{align}
Terms $IV_{1,1,2}$ can be handled analogously to $IV_{1,1,1}$. Employing the identity $\nabla=\frac{x}r\partial_r-\frac{x}{r^2}\wedge\Omega$ yields
\begin{align*}
\|\nabla(P_{\leq l-3}f)\|_{L^{\infty}} \leq C\|(R_{\leq k}\partial_r\partial^{\leq1} u)\|_{L^{\infty}}+C\left\|R_{\leq k}\left(\frac{\Omega}r\partial^{\leq1} u\right)\right\|_{L^{\infty}}\leq C\|\partial^{\leq1}\nabla u\|_{L^{\infty}},
\end{align*}
which combining inequality \eqref{small1} and the discussion on $I_3$ shows that terms $IV_{1,1,3}$ have the same bound as those given in \eqref{bound2}. 

Now we come to terms $IV_{1,2}$. Similarly to $IV_{1,1}$, 
\[
IV_{1,2}\leq IV_{1,2,1}+IV_{1,2,2}+IV_{1,2,3},
\]
where $IV_{1,2,i}$ is equal to $IV_{1,1,i}$ with 
\[
f=R_{k_2}\partial^{\leq1} u,\quad g=R_{\leq k}\partial\nabla u.
\]
For terms $IV_{1,2,1}$, we rewrite $\sum_{l=0}^{+\infty} P_j\left(\left(P_{\leq l+2}g\right)P_lf\right)=\sum_{l=j-4}^{+\infty}\sum_{m=0}^{l+2} P_j\left(P_mgP_lf\right)$ and consider the following three cases: $l=\min\{l,m,j\}$, $m=\min\{l,m,j\}$ or $j=\min\{l,m,j\}$. 
\begin{itemize}
  \item When $m=\min\{l,m,j\}$, using $\|P_{\leq l+2}R_{\leq k}f\|_{L^{\infty}}\leq C\|f\|_{L^{\infty}}$ and inequality \eqref{small1} implies
\begin{align*}
IV_{1,2,1}&\leq C\int_0^T\sum_{j,k}W_{j,k}^1\sum_{k_2=k-1}^{k+1}\sum_{l=j-2}^{j+2}\left\|P_{\leq l+2}R_{\leq k}\partial\nabla u\right\|_{L^{\infty}}\left\|R_{k_2}P_l\partial^{\leq1} u\right\|_{L^2}\|\partial_tu_{j,k}\|_{L^2}dt,
\end{align*}
which satisfies the same bound as those given in \eqref{bound2}.
  \item When $l=\min\{l,m,j\}$, we  get $j-2\leq m\leq j+2$. Therefore the estimate
\begin{align}\label{add2}
\|P_{m}R_{\leq k}\partial\nabla u\|_{L^{\infty}(\mathbb R^3)}\leq \|\partial\nabla u_m\|_{L^{\infty}(\mathbb R^3)}
\leq \frac{C\varepsilon}{\delta}(1+t)^{-1}(2^m)^{-\delta},
\end{align}
shows that terms $IV_{1,2,1}$ are bounded by
\begin{align*}
C\int_0^T\sum_{j,k}W_{j,k}^1\sum_{k_2=k-1}^{k+1}\sum_{l=j-4}^{+\infty}\sum_{m=j-2}^{j+2}\left\|P_{m}R_{\leq k}\partial\nabla u\right\|_{L^{\infty}}\left\|R_{k_2}P_l\partial^{\leq1} u\right\|_{L^2}\|\partial_tu_{j,k}\|_{L^2}dt
\end{align*}
with the same bound as those given in \eqref{bound2}.
  \item When $j=\min\{l,m,j\}$, using \eqref{add2} and Lemma \ref{LP2} yields that terms $IV_{1,2,1}$ are bounded by
\begin{align*}
C\int_0^T\sum_{j,k}W_{j,k}^1\sum_{k_2=k-1}^{k+1}\sum_{l=j-4}^{+\infty}\sum_{m=l-2}^{l+2}\left\|P_{m}R_{\leq k}\partial\nabla u\right\|_{L^{\infty}}\left\|R_{k_2}P_l\partial^{\leq1} u\right\|_{L^2}\|\partial_tu_{j,k}\|_{L^2}dt,
\end{align*}
which has the same bound as those given in \eqref{bound2}.
\end{itemize}
Now we turn to terms $IV_{1,2,2}$. A straight calculation shows that 
\begin{align*}
\left\|\sum_{l=j-4}^{+\infty}\left(P_{\leq l+2}P_jR_{\leq k}\partial\nabla u\right)P_lR_{k_2}\partial^{\leq1} u\right\|_{L^2}
\leq \frac{C\varepsilon}{\delta}(1+t)^{-1}(2^j)^{-\delta}\sum_{l=j-4}^{+\infty}\left\|P_lR_{k_2}\partial^{\leq1} u\right\|_{L^2},
\end{align*}
which has the same bound as those given in \eqref{bound2}.

By using Bernstein's inequality $\|\nabla P_j\bar{f}\|_{L^{\infty}}\leq C2^j\|P_j\bar{f}\|_{L^{\infty}}$ and  Lemma \ref{sob1}, 
\begin{align*}
&\quad\|\nabla(P_{\leq l-3}R_{k_2}\partial^{\leq1} u)\|_{L^{\infty}}
\\&\leq C(1+t)^{-1}\sum_{m=0}^{l-3}2^m\left((2^{m})^{\frac12}\sum_{m_1=m-1}^{m+1}\sum_{k_3=k_2-1}^{k_2+1}\|P_{m_1}R_{k_3}\partial^{\leq1}Z^{\leq1} u\|_{L^2}
+(2^{m})^{\frac32}\|P_{m}R_{k_2}\partial^{\leq1}u\|_{L^2}\right)
\\&\leq C(1+t)^{-1}\left(\sum_{m=0}^{l-3}(2^{m})^{2-40\delta}\right)^{\frac12}\left(\sum_{m_1=0}^{+\infty}\sum_{k_3=k_2-1}^{k_2+1}(2^{m_1})^{1+40\delta}\|P_{m_1}R_{k_3}\partial^{\leq1}Z^{\leq1} u\|_{L^2}^2\right)^{\frac12}
\\&\quad+C(1+t)^{-1}\left(\sum_{m=0}^{l-3}(2^m)^{2-50\delta}\right)^{\frac12}\left(\sum_{m=0}^{+\infty}(2^{m})^{3+50\delta}\|P_{m}R_{k_2}\partial^{\leq1}u\|^2_{L^2}\right)^{\frac12},
\end{align*}
which combing a similar discussion of $IV_{1,1,3}$ shows that
\begin{align*}
IV_{1,2,3}\leq &C\sum_{j,k}W_{j,k}^1(2^j)^{-1}\sum_{k_2=k-1}^{k+1}\int_0^T\sum_{l=j-2}^{j+2}\|\nabla(P_{\leq l-3}R_{k_2}\partial^{\leq1} u)\|_{L^{\infty}}\|P_lR_{\leq k}\partial\nabla u\|_{L^2}\|\partial_tu_{j,k}\|_{L^2}dt
\\ \leq &C\int_0^T(1+t)^{-1}M_{high}(u(t))\left(\sum_{j=0}^{+\infty}\left((2^j)^{\frac32+5\delta}\sum_{l=j-2}^{j+2}\|P_l\partial\nabla u\|_{L^2}\right)^2\right)^{\frac12}dt,
\end{align*}
with the same bound as those given in \eqref{bound2}.

Now let's consider terms $IV_{1,3}$. Similarly to $IV_{1,1}$, we get 
\[
IV_{1,3}\leq IV_{1,3,1}+IV_{1,3,2}+IV_{1,3,3}, 
\]
where
\begin{align*}
&IV_{1,3,1}=C\sum_{j,k}W_{j,k}^1\sum_{k_1=k}^{+\infty}\sum_{k_2=k_1-1}^{k_1+1}\int_0^T\left\|\sum_{l=0}^{+\infty} P_j\left(\left(P_{\leq l+2}R_{k_2}\partial\nabla u\right)P_lR_{k_1}\partial^{\leq1} u\right)\right\|_{L^2}\|\partial_tu_{j,k}\|_{L^2}dt,
\\&IV_{1,3,2}=C\sum_{j,k}W_{j,k}^1\sum_{k_1=k}^{+\infty}\sum_{k_2=k_1-1}^{k_1+1}\int_0^T\left\|\sum_{l=0}^{+\infty}\left(P_{\leq l+2}P_jR_{k_2}\partial\nabla u\right)P_lR_{k_1}\partial^{\leq1} u\right\|_{L^2}\|\partial_tu_{j,k}\|_{L^2}dt,
\\&IV_{1,3,3}=C\sum_{j,k}W_{j,k}^12^{-j}\sum_{k_1=k}^{+\infty}\sum_{k_2=k_1-1}^{k_1+1}\int_0^T\left\|\sum_{l=j-2}^{j+2}L(P_{\leq l-3}R_{k_1}\partial^{\leq1} u,P_lR_{k_2}\partial\nabla u)\right\|_{L^2}\|\partial_tu_{j,k}\|_{L^2}dt.
\end{align*}
H\"older inequality shows that terms $IV_{1,3,1}$ are bounded by
\begin{align*}
&C\int_0^T\left(M_{high2}(u(t))\right)^{\frac12}\left(\sum_{j,k}W_{j,k}^1\left(\sum_{k_1=k}^{+\infty}\sum_{k_2=k_1-1}^{k_1+1}\sum_{l=j-4}^{+\infty}\left\|P_{\leq l+2}R_{k_2}\partial\nabla u\right\|_{L^{\infty}}\left\|R_{k_1}P_l\partial^{\leq1} u\right\|_{L^2}\right)^2\right)^{\frac12}dt,
\end{align*}
which combining the estimate \eqref{iv_111} shows that terms $IV_{1,3,1}$ satisfy the same bound as those given in \eqref{bound2}. 
Terms $IV_{1,3,2}$ can be treated similarly to $IV_{1,3,1}$. Analogously to $IV_{1,1,3}$, we get the bound of $IV_{1,3,3}$ as follows
\begin{align*}
&C\sum_{j,k}(2^{j})^{2+50\delta}(2^{k})^{2+2\delta}\sum_{k_1=k}^{+\infty}\sum_{k_2=k_1-1}^{k_1+1}\int_0^T\sum_{l=j-2}^{j+2}\|\nabla(P_{\leq l-3}R_{k_1}\partial^{\leq1} u)\|_{L^{\infty}}\|P_lR_{k_2}\partial\nabla u\|_{L^2}\|\partial_tu_{j,k}\|_{L^2}dt,
\end{align*} 
which combining Lemma \ref{sob1} and $M_{low4}(u(t))$ yields that terms $IV_{1,3,3}$ have the same bound as those given in \eqref{bound2}. 

Therefore combing the discussion above shows that
\[
IV_1\leq \frac{C\varepsilon}{\delta^3}\int_0^T(1+t)^{-1}M_{high}(u(t))dt.
\]

Since
\[
IV_3\leq C\sum_{j=0}^{+\infty}\sum_{k=j+20}^{+\infty}W_{j,k}^1\int_0^T\int \left|R_kP_j\left(\sum_{j_1,k_1=0}^{+\infty}R_{k_1}P_{j_1}\partial^{\leq1}u\sum_{j_2,k_2=0}^{+\infty}R_{k_2}P_{j_2}\partial^{\leq1}u\right)\partial_tu_{j,k}\right|dxdt,
\]
we only need to consider the following four cases: ($k_1=\min\{k,k_1,k_2\}$ or $k=\min\{k,k_1,k_2\}$) and ($j_1=\min\{j,j_1,j_2\}$ or $j=\min\{j,j_1,j_2\}$). By Lemma \ref{ALP_ortho} and Lemma \ref{LP2}, it's not difficult to treat the term $IV_3$ and we omit the details. We finally get 
\[
IV_3\leq \frac{C\varepsilon}{\delta^3}\int_0^T(1+t)^{-1}M_{high}(u(t))dt.
\]

Thus it suffices to estimate the most difficult term $IV_2$. Let 
\[
v=P_ju.
\]
Using the identity $f=\sum_{k=0}^{+\infty}R_kf$, the definition \eqref{Rk_def} and the orthogonal property of $\varphi_{l,m}$ yields
\begin{align}\label{cc6}
\int_{\mathbb R^3}(R_kf)(R_kg)dx=\int_{\mathbb R^3}f(R_kg)dx,
\end{align}
holds for any scalar real-valued functions $f,g$. Thus using the definition \eqref{ALP_def} gives
\begin{align*}
IV_2&=\sum_{j,k}W_{j,k}^1\int_0^T\int \left((C^{i\alpha\gamma}\partial_{\gamma}u+C^{i\alpha}u)\partial_{i}\partial_{\alpha}v\right)R_k\partial_tvdxdt
\\&=\sum_{j,k}W_{j,k}^1\int_0^T\int_0^{+\infty}\int_{S^2} \left((C^{i\alpha\gamma}\partial_{\gamma}u(t,r\eta)+C^{i\alpha}u(t,r\eta))\partial_{i}\partial_{\alpha}v(t,r\eta)\right)R_k\partial_tv(t,r\eta)d_{S^2(\eta)}r^2drdt
\\&=\sum_{j,k}W_{j,k}^1\int_0^T\int_0^{+\infty}\int_{S^2} \left(C^{i\alpha\gamma}\partial_{\gamma}u(t,r\eta)\partial_{i}\partial_{\alpha}v(t,r\eta)\right)\int_{S^2}\partial_tv(t,r\xi)\widetilde{Q}_k(\langle \xi,\eta\rangle)d_{S^2(\xi)}d_{S^2(\eta)}r^2drdt
\\&\quad+\sum_{j,k}W_{j,k}^1\int_0^T\int_0^{+\infty}\int_{S^2} \left(C^{i\alpha}u(t,r\eta)\partial_{i}\partial_{\alpha}v(t,r\eta)\right)\int_{S^2}\partial_tv(t,r\xi)\widetilde{Q}_k(\langle \xi,\eta\rangle)d_{S^2(\xi)}d_{S^2(\eta)}r^2drdt.
\end{align*}
The second term on the right-hand side of the above inequality can be estimated similarly to the first term; we omit the details here. 
Recall that $\partial_i=\frac {x^i}r\partial_r+\frac{x^l}{r^2}\widetilde{\Omega}_{li}$. To better understand the action of $\partial_i$,
we use polar coordinates: $\eta^1=\cos\theta,\ \eta^2=\sin\theta\cos\phi,\ \eta^3=\sin\theta\sin\phi$. Thus $\widetilde{\Omega}_{li}$ is the summation of $\partial_{\theta},\partial_{\phi}$ and $d_{S^2(\eta)}=\sin\theta d\theta d\phi$, which combining integration by parts with respect $\theta$ or $\phi$ yields that for any scalar functions $f,g$ and any $i,j=1,2,3$, 
\begin{align}\label{int_by_parts}
\int_{\mathbb S^2}g\widetilde{\Omega}_{ij}fd_{S^2}=-\int_{\mathbb S^2}f\widetilde{\Omega}_{ij}gd_{S^2}.
\end{align}

 Consequently,
\begin{align*}
IV_2=&\sum_{j,k}\int_0^T \Big(IV_{2,1}+IV_{2,2}\Big)dt+terms\ involving\ C^{i\alpha}u\partial_{i}\partial_{\alpha}v,
\end{align*}
where
\begin{align}
\label{radial1}&IV_{2,1}=W_{j,k}^1
\int_0^{+\infty}\int_{S^2} C^{i\alpha\gamma}\partial_{\gamma}u(t,r\eta)\eta^i\partial_r\partial_{\alpha}v(t,r\eta)\int_{S^2}\partial_tv(t,r\xi)\widetilde{Q}_k(\langle \xi,\eta\rangle)d_{S^2(\xi)}d_{S^2(\eta)}r^2dr,
\\ \label{angular1}&IV_{2,2}=W_{j,k}^1\int_0^{+\infty}\int_{S^2} C^{i\alpha\gamma}\partial_{\gamma}u(t,r\eta)\frac{\eta^l}r\widetilde{\Omega}_{li}\partial_{\alpha}v(t,r\eta)\int_{S^2}\partial_tv(t,r\xi)\widetilde{Q}_k(\langle \xi,\eta\rangle)d_{S^2(\xi)}d_{S^2(\eta)}r^2dr.
\end{align}
We first consider the term $IV_{2,1}$. In the case $\alpha=0$, we write $IV_{2,1}^{\alpha=0}$ for $IV_{2,1}$. Then integrating by parts with respect to $r$ gives
\begin{align}
\label{radial2}&\quad IV_{2,1}^{\alpha=0}
\nonumber\\&=IV_{2,1,1}^{\alpha=0}-W_{j,k}^1\int_0^{+\infty}\int_{S^2} \left(C^{i0\gamma}\partial_{\gamma}u(t,r\eta)\eta^i\partial_{t}v(t,r\eta)\right)\int_{S^2}\partial_r\partial_tv(t,r\xi)\widetilde{Q}_k(\langle \xi,\eta\rangle)d_{S^2(\xi)}d_{S^2(\eta)}r^2dr
\nonumber\\&=IV_{2,1,1}^{\alpha=0}-W_{j,k}^1\int_0^{+\infty}\int_{S^2}\int_{S^2}\left(C^{i0\gamma}\partial_{\gamma}u(t,r\xi)\xi^i\partial_{t}v(t,r\xi)\right)\partial_r\partial_tv(t,r\eta)\widetilde{Q}_k(\langle \xi,\eta\rangle)d_{S^2(\xi)}d_{S^2(\eta)}r^2dr,
\end{align}
where we have swapped the variables $\xi$ and $\eta$ in the second term of the last inequality and 
\begin{align*}
IV_{2,1,1}^{\alpha=0}=-W_{j,k}^1\int_0^{+\infty}\int_{S^2} C^{i0\gamma}\left((\partial_r+\frac2r)\partial_{\gamma}u(t,r\eta)\right)\eta^i\partial_{t}v(t,r\eta)\int_{S^2}\partial_tv(t,r\xi)\widetilde{Q}_k(\langle \xi,\eta\rangle)d_{S^2(\xi)}d_{S^2(\eta)}r^2dr.
\end{align*}
Combining \eqref{radial1} and \eqref{radial2}, we get
\[
     IV_{2,1}^{\alpha=0}=\frac12 IV_{2,1,1}^{\alpha=0}+IV_{2,1,2}^{\alpha=0},
\]
where
\begin{align*}
IV_{2,1,2}^{\alpha=0}=-\frac12W_{j,k}^1\int_0^{+\infty}\int_{S^2}\int_{S^2}\left(f_0(t,r,\xi)-f_0(t,r,\eta)\right)\partial_{t}v(t,r\xi)\partial_r\partial_tv(t,r\eta)\widetilde{Q}_k(\langle \xi,\eta\rangle)d_{S^2(\xi)}d_{S^2(\eta)}r^2dr,
\end{align*}
with $f_0(t,r,\xi)=C^{i0\gamma}\partial_{\gamma}u(t,r\xi)\xi^i$. We first estimate $IV_{2,1,1}^{\alpha=0}$, which is given by
\[
 IV_{2,1,1}^{\alpha=0}=IV_{2,1,1,1}^{\alpha=0}+IV_{2,1,1,2}^{\alpha=0},
\]
where
\begin{align*}
&IV_{2,1,1,1}^{\alpha=0}=-W_{j,k}^1\int R_k\left(C^{i0\gamma}\left(\partial_r\partial_{\gamma}u\right)\frac{x^i}r\partial_{t}v\right)R_k\partial_tvdx,
\\&IV_{2,1,1,2}^{\alpha=0}=-W_{j,k}^1\int R_k\left(C^{i0\gamma}\frac2r\partial_\gamma u\frac{x^i}r\partial_{t}v\right)R_k\partial_tvdx.
\end{align*}
Terms $IV_{2,1,1,2}^{\alpha=0}$ will be canceled by the subsequent terms. Applying Lemma \ref{xw1} yields
\begin{align}\label{xw13}
IV_{2,1,1,1}^{\alpha=0}\leq CW_{j,k}^1\sum_{k_3=k-2}^{k+2}\left\|\sum_{k_1,k_2}R_{k_3}\left(\left(R_{k_1}\nabla\partial u\right)\partial_{t}R_{k_2}v\right)\right\|_{L^2}\|R_k\partial_tv\|_{L^2}.
\end{align}
Lemma \ref{ALP_ortho} shows that the sum is zero unless $\max\{k_3,k_1,k_2\}\leq med\{k_3,k_1,k_2\}+2$. The cases $k_3=\min\{k_3,k_1,k_2\}$ and $k_1=\min\{k_3,k_1,k_2\}$ are straightforward, so we assume $k_2=\min\{k_3,k_1,k_2\}$, which impies $|k_3-k_1|\leq2$. Employing Lemma \ref{sob1} then controls the right-hand side of \eqref{xw13} by
\begin{align*}
&C(2^j)^{3+50\delta}(2^k)^{2+2\delta}\sum_{k_1=k-4}^{k+4}\|R_{k_1}\partial\nabla u\|_{L^{\infty}}\|R_{\leq k+4}\partial_{t}v\|_{L^2}\|R_k\partial_tv\|_{L^2}
\\ \leq &\frac{C}{\delta(1+t)}(2^{j})^{-3\delta}\Big((2^{k})^{1+\delta}\sum_{k_1=k-6}^{k+6}\big(\sum_{j_1=0}^{+\infty}\left((2^{j_1})^{3+\delta}\|P_{j_1}R_{k_1}\partial\nabla u\|^2_{L^2}+(2^{j_1})^{1+\delta}\|P_{j_1}R_{k_1}Z^{\leq1}\partial\nabla u\|^2_{L^2}\right)\big)^{1/2}\Big)
\\&\times\Big((2^j)^{\frac52+28\delta}\|\partial_{t}v\|_{L^2}\Big)\Big((2^j)^{\frac32+25\delta}(2^k)^{1+\delta}\|R_k\partial_tv\|_{L^2}\Big).
\end{align*}
Thus
\begin{align}\label{bound3}
\sum_{j,k}IV_{2,1,1,1}^{\alpha=0}\leq\frac{C\varepsilon}{\delta^3}(1+t)^{-1}M_{high}(u(t)).
\end{align}
Now we treat terms $IV_{2,1,2}^{\alpha=0}$. By Lemma \ref{three}, to ensure that $\int_{\mathbb S^2}R_{k_1}fR_{k_2}gR_{k_3}hd_{S^2}$ is nonzero, we get
\begin{align}\label{add3}
\max\{k_1,k_2,k_3\}\leq med\{k_1,k_2,k_3\}+1.
\end{align}
By using \eqref{cc6} and lemma \ref{ALP_ortho},
\begin{align*}
IV_{2,1,2}^{\alpha=0}=-\frac12W_{j,k}^1\sum_{k_1,k_2,k_3=0}^{+\infty}\int_{\mathbb R^3}\Big(R_k(R_{k_1}f_0\partial_tR_{k_2}v)-R_{k_1}f_0R_kR_{k_2}\partial_{t}v\Big)R_{k_3}\partial_r\partial_tvdx.
\end{align*}
By using inequality \eqref{add3}, in order that the summation term does not vanish, the following three cases are distinguished according to the minimum of the indices
\begin{itemize}
  \item \textbf{case 1}: If $k_1=\min\{k_1,k_2,k_3\}$, then $k-2\leq k_2\leq k+2$ and $k-4\leq k_3\leq k+4$.
  \item \textbf{case 2}: If $k_2=\min\{k_1,k_2,k_3\}$, then $k_1\geq k-2$ and $k_1-4\leq k_3\leq k_1+4$.
  \item \textbf{case 3}: If $k_3=\min\{k_1,k_2,k_3\}$, then $k_1\geq k-2$ and $k_1-4\leq k_2\leq k_1+4$.
\end{itemize}
By lemma \ref{diff1}, we obtain
\begin{align}\label{xw14}
&\Big\|R_k(\sum_{k_1}R_{k_1}f_0\sum_{k_2}\partial_tR_{k_2}v)-\sum_{k_1}R_{k_1}f_0\sum_{k_2}R_kR_{k_2}\partial_{t}v\Big\|_{L^2(S_{\xi}^2)}
\nonumber\\ \leq &C(2^{k})^{-1}\left\|\sum_{k_1}\Omega (R_{k_1}f_0(t,r,\xi))\right\|_{L^{\infty}(S_{\xi}^2)}\left\|\sum_{k_2}R_{k_2}\partial_{t}v(t,r\xi)\right\|_{L^2(S_{\xi}^2)}.
\end{align}
\begin{itemize}
  \item In \textbf{case 1} under the additional condition $k\geq j+20$, since
\begin{align*}
\left\|\sum_{k_1=0}^{k+2}\Omega (R_{k_1}f_0(t,r,\xi))\right\|_{L^{\infty}}
&\leq C\left(\left\|\Omega \partial u\right\|_{L^{\infty}}+\left\|\partial u\right\|_{L^{\infty}}\right)
\\&\leq C(1+t)^{-1}\left(\left\|rL\partial u\right\|_{L^{\infty}}+\left\|r\nabla\partial u\right\|_{L^{\infty}}\right)+C\left\|\partial u\right\|_{L^{\infty}}
\\&\leq \frac{C}{\delta^2}(1+t)^{-1}\left(M_{high}(u(t))\right)^{1/2}+\frac{C\varepsilon}{\delta^3}(1+t)^{-1},
\end{align*}
where we used Lemma \ref{radial_decay}, then using the lower-order energies: $M_{low1}(u(t))$ and the second part of $M_{low2}(u(t))$, yields that $\sum_{j,k}IV_{2,1,2}^{\alpha=0}$
satisfies the same bound as those given in \eqref{bound3}.
   
  \item In \textbf{case 2}, noting $\sum_{k=j+20}^{+\infty}\sum_{k_1=k-2}^{+\infty}=\sum_{k_1=j+18}^{+\infty}\sum_{k=j+20}^{k_1+2},\quad\sum_{k=j+20}^{k_1+2}(2^k)^{1+2\delta}\leq C(2^{k_1})^{1+2\delta}$, and  using a modification of the proof on Lemma \ref{radial_decay} gives 
\begin{align*}
& \sum_{j,k}IV_{2,1,2}^{\alpha=0}
\\ \leq & \sum_{j=0}^{+\infty}\sum_{k_1=j+18}^{+\infty}\sum_{k_3=k_1-4}^{k_1+4}\sum_{k=j+20}^{k_1+2}(2^k)^{1+2\delta}(2^j)^{3+50\delta}\|\Omega R_{k_1}f_0\|_{L^{\infty}}\left\|\partial_{t}v\right\|_{L^2}
\left\|R_{k_3}\partial_t\partial_rv\right\|_{L^2}
\\ \leq & C\left(\sum_{k_1=18}^{+\infty}(2^{k_1})^{2\delta}\|\Omega R_{k_1}f_0\|^2_{L^{\infty}}\right)^{\frac12}\Big(M_{high}(u(t))\Big)^{\frac12}\left(\sum_{j=0}^{+\infty}\left((2^j)^{\frac52+25\delta}\left\|\partial_{t}v\right\|_{L^2}\right)^2\right)^{\frac12}
\\ \leq&\frac{C}{\delta}(1+t)^{-1}M_{high}(u(t))\left(M_{low1}(u(t))\right)^{\frac12},
\end{align*}
which satisfies the same bound as those given in \eqref{bound3}.
\item The treatment of \textbf{case 3} is analogous to that of \textbf{case 2} and we omit the details here.
\end{itemize}

We now turn to the term $IV_{2,2}$ in the case $\alpha=0$. We  write $IV_{2,2}^{\alpha=0}$ for $IV_{2,2}$ in this case. Using 
\[
\widetilde{\Omega}_{li}x^l=-2x^i,\ (\xi^l\partial_{\xi^i}-\xi^i\partial_{\xi^l})\langle \xi,\eta\rangle=-(\eta^{l}\partial_{\eta^i}-\eta^{i}\partial_{\eta^l})\langle \xi,\eta\rangle
\]
and identity \eqref{int_by_parts} yields
\begin{align*}
&\quad IV_{2,2}^{\alpha=0}-IV_{2,2,2}^{\alpha=0}-IV_{2,2,3}^{\alpha=0}
\\&=-W_{j,k}^1\int_0^{+\infty}\int_{S^2} C^{i0\gamma}\partial_{\gamma}u(t,r\eta)\frac{\eta^l}r\partial_{t}v(t,r\eta)\int_{S^2}\widetilde{\Omega}_{li}\partial_tv(t,r\xi)\widetilde{Q}_k(\langle \xi,\eta\rangle)d_{S^2(\xi)}d_{S^2(\eta)}r^2dr
\\&=-W_{j,k}^1\int_0^{+\infty}\int_{S^2}\int_{S^2} C^{i0\gamma}\partial_{\gamma}u(t,r\xi)\frac{\xi^l}r\partial_{t}v(t,r\xi)\widetilde{\Omega}_{li}\partial_tv(t,r\eta)\widetilde{Q}_k(\langle \xi,\eta\rangle)d_{S^2(\xi)}d_{S^2(\eta)}r^2dr,
\end{align*}
where the last equality follows from the change of variables $\xi\leftrightarrow\eta$ in the last equality and
\begin{align*}
&IV_{2,2,2}^{\alpha=0}=W_{j,k}^1\int R_k\left(C^{i0\gamma}\frac2r\partial_\gamma u\frac{x^i}r\partial_{t}v\right)R_k\partial_tvdx,
\\&IV_{2,2,3}^{\alpha=0}=W_{j,k}^1\int R_k\left(C^{i0\gamma}\widetilde{\Omega}_{il}\partial_\gamma u\frac{x^l}{r^2}\partial_{t}v\right)R_k\partial_tvdx. 
\end{align*}
Adding \eqref{angular1} and the above identity, we obtain
\begin{align*}
IV_{2,2}^{\alpha=0}=IV_{2,2,1}^{\alpha=0}+\frac12IV_{2,2,2}^{\alpha=0}+\frac12IV_{2,2,3}^{\alpha=0},
\end{align*}
where 
\begin{align*}
&IV_{2,2,1}^{\alpha=0}
\\=&\frac12W_{j,k}^1\int_0^{+\infty}\int_{S^2}\int_{S^2} \left(f_{i,l}(t,r,\eta)-f_{i,l}(t,r,\xi)\right)\partial_{t}v(t,r\xi)\left(\frac {\widetilde{\Omega}_{li}}r\partial_tv(t,r\eta)\right)\widetilde{Q}_k(\langle \xi,\eta\rangle)d_{S^2(\xi)}d_{S^2(\eta)}r^2dr,
\end{align*}
with $f_{i,l}(t,r,\xi)=C^{i0\gamma}\partial_{\gamma}u(t,r\xi)\xi^l$. The sum of $\frac12IV_{2,2,2}^{\alpha=0}$ and $\frac12IV_{2,1,1,2}^{\alpha=0}$ is zero. Terms $IV_{2,2,3}^{\alpha=0}$ can be treated analogously to  $IV_{2,1,1,1}^{\alpha=0}$, while terms $IV_{2,2,1}^{\alpha=0}$ are estimated similarly to $IV_{2,1,2}^{\alpha=0}$.
 
For $\alpha=m$, $m=1,2,3$, similarly, we get
\begin{align*}
IV^{\alpha=m}_2=\sum_{j,k}\int_0^T\left(IV^{\alpha=m}_{2,1}+IV^{\alpha=m}_{2,2}+IV^{\alpha=m}_{2,3}\right)dt+terms\ involving\ C^{i\alpha}u\partial_{i}\partial_{\alpha}v,
\end{align*}
where
\begin{align}
\nonumber&IV^{\alpha=m}_{2,1}=W_{j,k}^1\int_0^{+\infty}\int_{S^2} \left(C^{im\gamma}\partial_{\gamma}u(t,r\eta)\eta^i\eta^m\partial_r^2v(t,r\eta)\right)\int_{S^2}\partial_tv(t,r\xi)\widetilde{Q}_k(\langle \xi,\eta\rangle)d_{S^2(\xi)}d_{S^2(\eta)}r^2dr,
\\ \nonumber&IV^{\alpha=m}_{2,2}=W_{j,k}^1\int_0^{+\infty}\int_{S^2} C^{im\gamma}\partial_{\gamma}u(t,r\eta)\left(\frac{\eta^n}r\widetilde{\Omega}_{ni}(\frac{\eta^l}r\widetilde{\Omega}_{lm})v\right)(t,r\eta)\int_{S^2}\partial_tv(t,r\xi)\widetilde{Q}_k(\langle \xi,\eta\rangle)d_{S^2(\xi)}d_{S^2(\eta)}r^2dr,
\\&\nonumber IV^{\alpha=m}_{2,3}=W_{j,k}^1\int_0^{+\infty}\int_{S^2} C^{im\gamma}\partial_{\gamma}u(t,r\eta)\left(\eta^i\partial_r(\frac{\eta^l}r\widetilde{\Omega}_{lm})v\right)(t,r\eta)\int_{S^2}\partial_tv(t,r\xi)\widetilde{Q}_k(\langle \xi,\eta\rangle)d_{S^2(\xi)}d_{S^2(\eta)}r^2dr
\\&\nonumber\qquad\qquad+W_{j,k}^1\int_0^{+\infty}\int_{S^2} C^{im\gamma}\partial_{\gamma}u(t,r\eta)\left(\frac{\eta^l}r\widetilde{\Omega}_{li}(\eta^m\partial_r)v\right)(t,r\eta)\int_{S^2}\partial_tv(t,r\xi)\widetilde{Q}_k(\langle \xi,\eta\rangle)d_{S^2(\xi)}d_{S^2(\eta)}r^2dr.
\end{align}
We begin with terms $IV^{\alpha=m}_{2,1}$. By integration by parts,
\begin{align*}
IV^{\alpha=m}_{2,1}&=-W_{j,k}^1\int_0^{+\infty}\int_{S^2} \left(C^{im\gamma}\partial_{\gamma}u(t,r\eta)\eta^i\eta^m\partial_rv(t,r\eta)\right)\int_{S^2}\partial_t\partial_rv(t,r\xi)\widetilde{Q}_k(\langle \xi,\eta\rangle)d_{S^2(\xi)}d_{S^2(\eta)}r^2dr
\\&\quad+IV^{\alpha=m}_{2,1,2}+IV^{\alpha=m}_{2,1,3}
\\&=W_{j,k}^1\int_0^{+\infty}\int_{S^2} \left(C^{im\gamma}\partial_{\gamma}u(t,r\eta)\eta^i\eta^m\partial_t\partial_rv(t,r\eta)\right)\int_{S^2}\partial_rv(t,r\xi)\widetilde{Q}_k(\langle \xi,\eta\rangle)d_{S^2(\xi)}d_{S^2(\eta)}r^2dr
\\&\quad+IV^{\alpha=m}_{2,1,2}+IV^{\alpha=m}_{2,1,3}+IV^{\alpha=m}_{2,1,4}+IV^{\alpha=m}_{2,1,5}
\\&=W_{j,k}^1\int_0^{+\infty}\int_{S^2}\int_{S^2} \left(C^{im\gamma}\partial_{\gamma}u(t,r\xi)\xi^i\xi^m\partial_t\partial_rv(t,r\xi)\right)\partial_rv(t,r\eta)\widetilde{Q}_k(\langle \xi,\eta\rangle)d_{S^2(\xi)}d_{S^2(\eta)}r^2dr
\\&\quad+IV^{\alpha=m}_{2,1,2}+IV^{\alpha=m}_{2,1,3}+IV^{\alpha=m}_{2,1,4}+IV^{\alpha=m}_{2,1,5},
\end{align*}
where we have swapped the variables $\xi$ and $\eta$ in the first term of the last equality, and 
\begin{align*}
&IV^{\alpha=m}_{2,1,2}=-W_{j,k}^1\int C^{im\gamma}\frac2r\partial_{\gamma}u\frac{x^i}r\frac{x^m}r\partial_rvR_k(\partial_tv)dx,
\\&IV^{\alpha=m}_{2,1,3}=-W_{j,k}^1\int C^{im\gamma}\partial_r\partial_{\gamma}u\frac{x^i}r\frac{x^m}r\partial_rvR_k(\partial_tv)dx,
\\&IV^{\alpha=m}_{2,1,4}=W_{j,k}^1\int C^{im\gamma}\partial_t\partial_{\gamma}u\frac{x^i}r\frac{x^m}r\partial_rvR_k(\partial_tv)dx,
\\&IV^{\alpha=m}_{2,1,5}=-W_{j,k}^1\frac{d}{dt}\int \left(C^{im\gamma}\partial_{\gamma}u\frac{x^i}r\frac{x^m}r\partial_rv\right)R_k\partial_rvdx.
\end{align*}
Adding the first and last equalities in the above formula gives 
\begin{align*}
IV^{\alpha=m}_{2,1}=IV^{\alpha=m}_{2,1,1}+IV^{\alpha=m}_{2,1,2}+IV^{\alpha=m}_{2,1,3}+\frac12IV^{\alpha=m}_{2,1,4}+\frac12IV^{\alpha=m}_{2,1,5},
\end{align*}
where
\begin{align*}
&IV^{\alpha=m}_{2,1,1}
\\=&\frac12W_{j,k}^1\int_0^{+\infty}\int_{S^2}\int_{S^2} \left(\widetilde{f}_{i,m}(t,r,\xi)-\widetilde{f}_{i,m}(t,r,\eta)\right)\partial_t\partial_rv(t,r\xi)\partial_rv(t,r\eta)\widetilde{Q}_k(\langle \xi,\eta\rangle)d_{S^2(\xi)}d_{S^2(\eta)}r^2dr,
\end{align*}
with $\widetilde{f}_{i,m}(t,r,\eta)=C^{im\gamma}\partial_{\gamma}u(t,r\eta)\eta^i\eta^m$. Terms $IV^{\alpha=m}_{2,1,3},IV^{\alpha=m}_{2,1,4}$ can be treated analogous to $IV_{2,1,1,1}^{\alpha=0}$, while terms $IV_{2,1,1}^{\alpha=m}$ are estimated similarly to $IV_{2,1,2}^{\alpha=0}$. For terms $IV^{\alpha=m}_{2,1,5}$, we get
\begin{align*}
\int_0^T\sum_{j,k}IV^{\alpha=m}_{2,1,5}dt\leq \frac{C\varepsilon}{\delta^2}(M_{high}(u(0))+M_{high}(u(T))).
\end{align*}
Terms $IV^{\alpha=m}_{2,2},IV^{\alpha=m}_{2,3}$ are analogous to $IV^{\alpha=m}_{2,1}$. We get that $IV^{\alpha=m}_{2,2}$ is the sum of $IV^{\alpha=m}_{2,2,i}$ and $IV^{\alpha=m}_{2,3}$ is the sum of $IV^{\alpha=m}_{2,3,i}$, respectively, where $IV^{\alpha=m}_{2,2,i},IV^{\alpha=m}_{2,3,i}$ are similar to $IV^{\alpha=m}_{2,1,i}$ for $i=1,3,4,5$ and 
\begin{align*}
&IV^{\alpha=m}_{2,2,2}=W_{j,k}^1\int C^{im\gamma}\frac2r\partial_{\gamma}u\frac{x^i}r\frac{x^l}{r^2}\widetilde{\Omega}_{lm}vR_k(\partial_tv)dx,
\\&IV^{\alpha=m}_{2,3,2}=-IV^{\alpha=m}_{2,2,2}-IV^{\alpha=m}_{2,1,2}.
\end{align*}
Thus the sum of $IV^{\alpha=m}_{2,1,2},IV^{\alpha=m}_{2,2,2},IV^{\alpha=m}_{2,3,2}$ is zero. 

A similar discussion to that above leads to 
\[
M_{high2}(u(T))\leq CM_{high}(u(0))+\frac{C\varepsilon}{\delta^3}\int_0^T(1+t)^{-1}M_{high}(u(t))dt.
\]

\subsubsection{\textbf{Estimates for $M_{high4}(u(t))$}}\label{sec_high4}
A straight calculation shows that
\begin{align}
\label{NLG_LP2}\Box (Zu)_{j,k}+(Zu)_{j,k}
=&R_kP_j\left((C^{i\alpha\gamma}\partial_{\gamma}u+C^{i\alpha}u)\partial_{i}\partial_{\alpha}Zu\right)+R_kP_j\left((C^{i\alpha\gamma}\partial_{\gamma}Zu+C^{i\alpha}Zu)\partial_{i}\partial_{\alpha}u\right)
\nonumber\\&+R_kP_j\left(\left(\overline{C}^{\alpha\beta\gamma}\partial_{\gamma}u+\overline{C}^{\alpha\beta}u\right)\partial_{\alpha}\partial_{\beta}u+Z(H(u))\right).
\end{align}
Let 
\[
W^2_{j,k}=(2^{j})^{1+40\delta}(2^{k})^{2+2\delta},\ \sum_{j,k}=\sum_{j=0}^{+\infty}\sum_{k=j+20}^{+\infty}.
\]
We now perform energy estimates for equation \eqref{NLG_LP2} with $k\geq j+20$ and $j\geq0$. Multiplying the equation by $\partial_t(Zu)_{j,k}$ and integrating over $\mathbb R^3$ yields 
\begin{align*}
&\sum_{j,k}(2^{j})^{1+40\delta}(2^{k})^{2+2\delta}\frac{d}{dt}\left(\left\|\partial^{\leq1} (Zu)_{j,k}\right\|^2_{L^2(\mathbb R^3)}\right)=V_1+V_2+V_3+V_4,
\end{align*}
where
\begin{align*}
&V_1=\mathcal{O}(1)\sum_{j,k}W^2_{j,k}\int R_k\left(P_j(\partial^{\leq1}u\partial\nabla Zu)-\partial^{\leq1}u\partial\nabla P_jZu\right)\partial_t(Zu)_{j,k}dx,
\\&V_2=\mathcal{O}(1)\sum_{j,k}W^2_{j,k}\int R_k\left(\partial^{\leq1}u\partial\nabla P_j(Zu)\right)\partial_t(Zu)_{j,k}dx,
\\&V_3=\sum_{j,k}W^2_{j,k}\int R_kP_j\left((C^{i\alpha\gamma}\partial_{\gamma}Zu+C^{i\alpha}Zu)\partial_{i}\partial_{\alpha}u\right)\partial_t(Zu)_{j,k}dx,
\\&V_4=\sum_{j,k}W^2_{j,k}\int R_kP_j\left(\left(\overline{C}^{\alpha\beta\gamma}\partial_{\gamma}u+\overline{C}^{\alpha\beta}u\right)\partial_{\alpha}\partial_{\beta}u+Z(H(u))\right)\partial_t(Zu)_{j,k}dx.
\end{align*}

We first focus on the terms $V_3$. Using
\begin{align*}
R_kP_j(\partial^{\leq1}Zu\partial_{i}\partial_{\alpha}u)&=\sum_{k_1,k_2,j_1,j_2=0}^{+\infty}R_kP_j\left(\left(R_{k_1}P_{j_1}\partial^{\leq1}Zu\right)\left(R_{k_2}P_{j_2}\partial_{\alpha}\partial_{i}u\right)\right),
\end{align*}
it suffices to consider the following nine cases: \textbf{Case 1A}; \textbf{Case 1B}; \textbf{Case 1C}; \textbf{Case 2A}; \textbf{Case 2B}; \textbf{Case 2C}; \textbf{Case 3A}; \textbf{Case 3B}; \textbf{Case 3C}, where \textbf{Case 1A} denotes \textbf{Case 1} and \textbf{Case A}, with 
\begin{itemize}
  \item \textbf{Case 1}: $j_1=\min\{j,j_1,j_2\}$; \textbf{Case 2}: $j_2=\min\{j,j_1,j_2\}$; \textbf{Case 3}: $j=\min\{j,j_1,j_2\}$.
  \item \textbf{Case A}: $k_1=\min\{k,k_1,k_2\}$; \textbf{Case B}: $k_2=\min\{k,k_1,k_2\}$; \textbf{Case C}: $k=\min\{k,k_1,k_2\}$.
\end{itemize}
Applying Lemma \ref{LP2} and Lemma \ref{ALP_ortho} gives $\max\{j,j_1,j_2\}\leq med\{j,j_1,j_2\}+2$ and  $\max\{k,k_1,k_2\}\leq med\{k,k_1,k_2\}+2$. In \textbf{Case 1A}, using H\"older inequality, identity \eqref{space_div}, Lemma \ref{radial_decay} and Bernstein's inequality yields terms $V_3$ are bounded by
\begin{align*}
&C\sum_{j,k}W^2_{j,k}\sum_{k_2=k-2}^{k+2}\sum_{j_2=j-2}^{j+2}\|\left(R_{\leq k}P_{\leq j}\partial^{\leq1}Zu\right)\left(R_{k_2}P_{j_2}\partial_{\alpha}\partial_{i}u\right)\|_{L^2}\|\partial_t(Zu)_{j,k}\|_{L^2}
\\ \leq &\frac{C}{1+t}\left(\left\|\langle r\rangle(\partial^{\leq1} Zu)\right\|_{L^{\infty}}(M_{low}(u(t)))^{\frac12}+\left\|\partial^{\leq1} Zu\right\|_{L^{\infty}}(M_{high}(u(t)))^{\frac12}\right)(M_{high4}(u(t)))^{\frac12},
\end{align*}
with the same bound as those in \eqref{bound3}. In \textbf{Case 1B}, applying identity \eqref{space_div} and Lemma \ref{product_radial} gives
\begin{align*}
&\quad\left\|\sum_{k_2=1}^k\left(R_{k_1}P_{\leq j}\partial^{\leq1}Zu\right)\left(R_{k_2}P_{j_2}\partial_{\alpha}\partial_{i}u\right)\right\|_{L^2(\mathbb R^3)}
\\&\leq \frac{C}{1+t}\left(\left\|\left(R_{k_1}P_{\leq j}\partial^{\leq1}Zu\right)\sum_{k_2=1}^k\left(R_{k_2}ZP_{j_2}\partial u\right)\right\|_{L^2}+\left\|\left(R_{k_1}P_{\leq j}\partial^{\leq1}Zu\right)\sum_{k_2=1}^{k}\left(R_{k_2}(xP_{j_2}\partial\partial_tu)\right)\right\|_{L^2}\right)
\\&\leq C(1+t)^{-1}\bigg(\sum_{j_3=j_2-2}^{j_2+2}\left\|R_{k_1}P_{\leq j}\partial^{\leq1}Zu\right\|_{L^3}\left\|P_{j_3}Z\partial u\right\|_{L^6}
\\&\qquad\qquad\qquad\quad+\sum_{j_1=0}^j(2^{j_1})^{\frac12}\left\|R_{k_1}P_{j_1}\partial^{\leq1}Zu\right\|_{L^2}\left(\left\|P_{j_2}\partial\partial_tu\right\|_{L^2}+
\sum_{k_2=1}^{k+1}2^{k_2}\left\|R_{k_2}P_{j_2}\partial\partial_tu\right\|_{L^2}\right)\bigg).
\end{align*}
Using $\|R_0(xP_{j_2}\partial^2u)\|_{L^{\infty}}^2\leq C2^{j_2}\|P_{j_2}\partial^2u\|_{L^2}^2$, the embedding theorems $\dot{H}^1\hookrightarrow L^6$ and $\dot{H}^{\frac12}\hookrightarrow L^3$ shows that the bound on $V_3$ is the same as those in \eqref{bound3} in \textbf{Case 1B}. \textbf{Case 1C} is analogous to \textbf{Case 1B} and we obtain 
\begin{align*}
&\quad\left\|\left(R_{k_1}P_{\leq j}\partial^{\leq1}Zu\right)\left(R_{k_2}P_{j_2}\partial_{\alpha}\partial_{i}u\right)\right\|_{L^2(\mathbb R^3)}
\\&\leq C(1+t)^{-1}\bigg(\sum_{j_3=j_2-2}^{j_2+2}\left\|R_{k_1}P_{\leq j}\partial^{\leq1}Zu\right\|_{L^3}\left\|R_{k_2}P_{j_3}Z\partial u\right\|_{L^6}
\\&\qquad\qquad\qquad\quad+\sum_{j_1=0}^j(2^{j_1})^{\frac12}\left\|R_{k_1}P_{j_1}\partial^{\leq1}Zu\right\|_{L^2}\sum_{k_3=k_2-1}^{k_2+1}2^{k_3}\left\|R_{k_3}P_{j_2}\partial\partial_tu\right\|_{L^2}\bigg),
\end{align*}
which implies that terms $V_3$ satisfy the same bound as those in \eqref{bound3} in \textbf{Case 1C}. 
\textbf{Case 2A} and \textbf{Case 2C} are treated analogously to \textbf{Case 1B} and \textbf{Case 1C}, respectively. Employing inequality \eqref{small1} provides the bound for terms $V_3$ in \textbf{Case 2B}. Furthermore, \textbf{Case 3A} is treated similarly to \textbf{Case 1A}, while we deal with \textbf{Case 3B} in a similar fashion of \textbf{Case 1B} and handle \textbf{Case 3C} similarly to \textbf{Case 1C}. Thus terms $V_3$ satisfy the same bound as those in \eqref{bound3}.

Terms $V_4$ can be handled in a similar fashion of $V_3$ by using Lemma \ref{sob1} and inequality \eqref{small1}, so we omit the details here.

For terms $V_1$, applying Lemma \ref{ALP_ortho}, we estimate them as follows 
\[
V_1=V_{1,1}+V_{1,2}+V_{1,3},
\]
where
\begin{align*}
&V_{1,1}\leq C\sum_{j,k}W^2_{j,k}\sum_{k_2=k-2}^{k+2}\left\|P_j\left((R_{\leq k}\partial^{\leq1} u)(R_{k_2}\partial\nabla Zu)\right)-(R_{\leq k}\partial^{\leq1} u)R_{k_2}\partial\nabla (Zu)_j\right\|_{L^2}\|\partial_t(Zu)_{j,k}\|_{L^2},
\\&V_{1,2}\leq C\sum_{j,k}W^2_{j,k}\sum_{k_2=k-2}^{k+2}\left\|P_j\left((R_{k_2}\partial^{\leq1} u)(R_{\leq k}\partial\nabla Zu)\right)-(R_{k_2}\partial^{\leq1} u)R_{\leq k}\partial\nabla (Zu)_j\right\|_{L^2}\|\partial_t(Zu)_{j,k}\|_{L^2},
\\&V_{1,3}\leq C\sum_{j,k}W^2_{j,k}\sum_{k_1=k}^{+\infty}\sum_{k_2=k_1-2}^{k_1+2}\left\|P_j\left((R_{k_1}\partial^{\leq1} u)(R_{k_2}\partial\nabla Zu)\right)-(R_{k_1}\partial^{\leq1} u)R_{k_2}\partial\nabla (Zu)_j\right\|_{L^2}\|\partial_t(Zu)_{j,k}\|_{L^2}.
\end{align*}
For $k-2\leq k_2\leq k+2$, applying the discussion on terms $II_1$ to $V_{1,1}$(using Lemma \ref{para-product2}) gives
\begin{align*}
\left\|P_j\left((R_{\leq k}\partial^{\leq1} u)(R_{k_2}\partial\nabla Zu)\right)-(R_{\leq k}\partial^{\leq1} u)(R_{k_2}\partial\nabla P_jZu)\right\|_{L^2(\mathbb R^3)}\leq V_{1,1,1}+V_{1,1,2}+V_{1,1,3},
\end{align*}
where
\begin{align*}
&V_{1,1,1}=C\sum_{l=j-4}^{+\infty}\sum_{m=0}^{l+2}\left\| P_j\left(\left(P_{m} g\right)P_lf\right)\right\|_{L^2(\mathbb R^3)},
\\&V_{1,1,2}=C\sum_{l=j-4}^{+\infty}\left\|\left(P_{\leq l+2}P_jg\right)P_lf\right\|_{L^2(\mathbb R^3)},
\\&V_{1,1,3}=C2^{-j}\sum_{l=j-2}^{j+2}\left\|L(P_{\leq l-3}f,P_l g)\right\|_{L^2(\mathbb R^3)},
\end{align*} 
with
\[
f=R_{\leq k}\partial^{\leq1} u,\ g=R_{k_2}\partial\nabla Zu.
\]
We next consider terms $V_{1,1,1}$. By H\"older inequality and lemma \ref{sob_lemma},
\begin{align*}
V_{1,1,1}&\leq C\sum_{l=j-4}^{+\infty}\sum_{k_3=k_2-2}^{k_2+2}\sum_{m=0}^{l+2}2^m\left\|P_{m} R_{k_3}\partial Zu\right\|_{L^2(\mathbb R^3)}\left\|P_lR_{\leq k}\partial^{\leq1} u\right\|_{L^{\infty}(\mathbb R^3)}
\\&\leq \frac{C}{\delta}\sum_{k_3=k_2-2}^{k_2+2}\left(\sum_{m=0}^{+\infty}(2^m)^{1+40\delta}\left\|P_{m} R_{k_3}\partial Zu\right\|^2_{L^2(\mathbb R^3)}\right)^{\frac12}\sum_{l=j-4}^{+\infty}(2^l)^{\frac12-20\delta}\left\|P_l\partial^{\leq1} u\right\|_{L^{\infty}(\mathbb R^3)}
\\&\leq \frac{C\varepsilon}{\delta^2}(1+t)^{-1}(2^j)^{-\frac12-21\delta}\sum_{k_3=k_2-2}^{k_2+2}\left(\sum_{m=0}^{+\infty}(2^m)^{1+40\delta}\left\|P_{m} R_{k_3}\partial Zu\right\|^2_{L^2(\mathbb R^3)}\right)^{\frac12}.
\end{align*}
Then terms $V_{1,1}$ involving $V_{1,1,1}$ satisfy the same bound as those in \eqref{bound3}. Terms $V_{1,1,2}$ are similar to $V_{1,1,1}$. A similar fashion of discussion of $I_3$ yields
\begin{align*}
V_{1,1,3}&\leq C2^{-j}\sum_{l=j-2}^{j+2}\|\nabla P_{\leq l-3}R_{\leq k}\partial^{\leq1} u\|_{L^{\infty}}\|P_l R_{k_2}\partial\nabla Zu\|_{L^2}
\\&\leq C\left(\sum_{m=0}^{j-1}2^m\|P_m\partial^{\leq1} u\|_{L^{\infty}}\right)\sum_{l=j-2}^{j+2}\sum_{k_3=k_2-2}^{k_2+2}\|P_l R_{k_3}\partial Zu\|_{L^2},
\end{align*} 
which implies that terms $V_{1,1}$ involving $V_{1,1,3}$ have the same bound as those in \eqref{bound3}.

For terms $V_{1,2}$, a similar discussion of $V_{1,1}$ gives that
\begin{align*}
\left\|P_j\left((R_{k_2}\partial^{\leq1} u)(R_{\leq k}\partial\nabla Zu)\right)-(R_{k_2}\partial^{\leq1} u)(R_{\leq k}\partial\nabla P_jZu)\right\|_{L^2(\mathbb R^3)}\leq V_{1,2,1}+V_{1,2,2}+V_{1,2,3},
\end{align*}
where $V_{1,2,i}$ is equal to $V_{1,1,i}$ with 
\[
f=R_{k_2}\partial^{\leq1} u,\ g=R_{\leq k}\partial\nabla Zu.
\] 
Using identity \eqref{space_div}, Lemma \ref{xw1} and Lemma \ref{product_radial} gives 
\begin{align*}
V_{1,2,1}&\leq \frac{C}{1+t}\sum_{l=j-4}^{+\infty}\sum_{m=0}^{l+2}\left(\left\|\left( R_{\leq k}ZP_{m}\partial Zu\right)P_lR_{k_2}\partial^{\leq1} u\right\|_{L^2}+\left\|\left( R_{\leq k}(xP_{m}\partial_t\partial Zu)\right)P_lR_{k_2}\partial^{\leq1} u\right\|_{L^2}\right)
\\&\leq \frac{C}{1+t}\sum_{l=j-4}^{+\infty}\sum_{m=0}^{l+4}\left(\left\|P_{m} R_{\leq k}Z\partial Zu\right\|_{L^{\infty}}+(2^m)^{\frac12}\|P_m\partial_t\partial Zu\|_{L^2}\right)\left\|P_lR_{k_2}\partial^{\leq1} u\right\|_{L^2}
\\&\quad+\frac{C}{1+t}\sum_{l=j-4}^{+\infty}\sum_{m=0}^{l+4}(2^l)^{\frac12}\left(\sum_{k_1=1}^{k+1}2^{k_1}\left\|P_{m} R_{k_1}\partial_t\partial Zu\right\|_{L^2}+\left\|P_{m}\partial_t\partial Zu\right\|_{L^2}\right)\left\|P_lR_{k_2}\partial^{\leq1} u\right\|_{L^2},
\end{align*}
where we used $R_{\leq k}(xP_{m}\partial_t\partial Zu)=R_{0}(xP_{m}\partial_t\partial Zu)+\sum_{k_2=1}^kR_{k_2}(xP_{m}\partial_t\partial Zu)$.
A similar fashion of discussion holds for $V_{1,2,2}$. Similarly to $I_3$,
\begin{align*}
V_{1,2,3}\leq C\sum_{m=0}^{j-1}\|\nabla P_{m}R_{k_2}\partial^{\leq1} u\|_{L^{\infty}}\sum_{l=j-2}^{j+2}\|P_l \partial Zu\|_{L^2}.
\end{align*}
Thus using Lemma \ref{sob1} shows that the bound of $V_{1,2}$ is the same as those in \eqref{bound3}. Similarly to $V_{1,1}$, it's not difficult to get the bound of $V_{1,3}$ by using $M_{low4}(u(t))$ and $M_{low2}(u(t))$. Thus terms $V_1$ have the same bound as those in \eqref{bound3}.

Thus it suffices to estimate the most difficult terms $V_2$. Let $v=(Zu)_{j}$ in Section \ref{sec_high4}. Since the exponent $46\delta$ (from $(2^j)^{46\delta}$ in $M_{low3}$) exceeds the sum of $\delta$ and the exponent $40\delta$ (from $(2^j)^{40\delta}$ in $M_{high4}(u(t))$), and since $30\delta+12\delta$ derived from $(2^j)^{30\delta}(2^k)^{12\delta}$ in $M_{low4}$ is no less than $40\delta+2\delta$ (from $(2^j)^{40\delta}(2^k)^{2\delta}$ in $M_{high4}(u(t))$), it follows that $V_{2,1,1,1}^{\alpha=0}$ and $V_{2,1,2}^{\alpha=0}$ are acceptable, respectively. 
Therefore, the same reasoning as that used for $IV_2$ can be applied directly to $V_2$.

The combination of discussion above shows that
\[
M_{high4}(u(T))\leq CM_{high}(u(0))+\frac{C\varepsilon}{\delta^3}\int_0^T(1+t)^{-1}M_{high}(u(t))dt.
\]
\subsubsection{\textbf{Conclusion of higher-order energy estimates}}
In view of the preceding estimates, for every $T>0$, we get
\[
M_{high}(u(T))\leq C_2\varepsilon^2+\frac{C_1\varepsilon}{\delta^3}\int_0^T(1+t)^{-1}M_{high}(u(t))dt.
\]
Gronwall's inequality now implies
\[
M_{high}(u(t))\leq C_2\varepsilon^2(1+t)^{\frac{C_1\varepsilon}{\delta^3}}.
\]
Thus for any fixed small positive constant $\delta$, if $\varepsilon$ in $(0,\delta^6]$ is small enough, we have
\[
M_{high}(u(t))\leq C_2\varepsilon^2(1+t)^{\delta^2}.
\]

\subsection{Lower-order energy estimates}\label{sec_low}
We now proceed to the ghost energy estimates for system \eqref{NLG1}, where Lemma \ref{product_KG} plays a pivotal role in estimating $M_{low}(u(t))$. First, multiplying $\Box v+v$ by $e^{\arctan(r-t)}\partial_tv$ yields the following identity
\begin{align}\label{ghost1}
\big(e^{\arctan(r-t)}\partial_tv\big)\big(\Box v+v \big)
&=\frac12\partial_t\big(e^{\arctan(r-t)}\left(|\partial v|^2+v^2\right)\big)-\partial_k(\partial_kve^{\arctan(r-t)}\partial_tv)
\nonumber\\&\quad+\frac12\frac1{1+(t-r)^2}\left(|\partial v|^2+2\partial_rv\partial_tv+v^2\right)e^{\arctan(r-t)}.
\end{align}
Employing the angular-radial decomposition of the gradient $\nabla=\frac xr\partial_r-\frac x{r^2}\wedge\Omega$ implies
\[
|\partial v|^2+2\partial_rv\partial_tv=\left((\partial_t+\partial_r)v\right)^2+\frac1{r^2}|\Omega v|^2\geq 0.
\]

\subsubsection{\textbf{Estimates for $M_{low1}(u(t))$}}\label{sec_low1}
Let
\[
N_{1,j}(u(t))=\int e^{\arctan(r-t)}\left(|\partial^{\leq1} u_j|^2\right)dx.
\]
Direct computations lead to the following key identity, with the quasilinear term rewritten in divergence form:
\begin{align}\label{div1}
2\partial_{\gamma}u\partial_{i}\partial_{\alpha}u=\partial_i(\partial_{\gamma}u\partial_{\alpha}u)+\partial_{\alpha}(\partial_iu\partial_{\gamma}u)-\partial_{\gamma}(\partial_iu\partial_{\alpha}u).
\end{align}
We further express the nonlinear terms as follows for convenience:
\begin{align*}
&C^{i\alpha\gamma}\partial_{\gamma}u\partial_{i}\partial_{\alpha}u=\mathcal{O}(\partial(\partial u\partial u)),
\\&C^{i\alpha}u\partial_{i}\partial_{\alpha}u=\mathcal{O}(\nabla(u\partial u))+\mathcal{O}(\partial^{\leq1}u\partial^{\leq1}u).
\end{align*}
Set $v=u_j$ in equality \eqref{ghost1}. Integrating \eqref{ghost1} on $\mathbb R^3$and carrying out integration by parts, we get
\begin{align}\label{low1}
\frac d{dt}N_{1,j}(u(t))+\int e^{\arctan(r-t)}\frac1{1+(t-r)^2}\left((u_j)^2+\left|(\partial_t+\partial_r)u_j\right|^2\right)dx\leq VI,
\end{align}
where
\begin{align*}
VI=\sum_{j_1=0}^{+\infty}\sum_{j_2=0}^{+\infty}\int \Big(P_j\left(\mathcal{O}(\partial(\partial u_{j_1}\partial u_{j_2}))+\mathcal{O}(\nabla(u_{j_1}\partial u_{j_2}))+\mathcal{O}(\partial^{\leq1} u_{j_1}\partial^{\leq1}u_{j_2})\right)\Big)e^{\arctan(r-t)}\partial_tu_jdx,
\end{align*} 

We then apply Lemma \ref{product_KG} to estimate the term $VI$ and deduce that
\[
VI\leq VI_1+VI_2+VI_3+VI_4+good\ terms,
\]
where
\begin{align*}
&VI_{1}=\mathcal{O}(1)\sum_{j_1,j_2=0}^{+\infty}\int \partial\Big(P_j\big(\Box(\partial u_{j_1}\partial u_{j_2})+\partial u_{j_1}\partial u_{j_2}\big)\Big)e^{\arctan(r-t)}\partial_tu_jdx,
\\&VI_{2}=\mathcal{O}(1)\sum_{j_1,j_2=0}^{+\infty}\int\partial\Big(P_j\big(\partial u_{j_1}(\Box \partial u_{j_2}+\partial u_{j_2})\big)\Big)e^{\arctan(r-t)}\partial_tu_jdx,
\\&VI_{3}=\mathcal{O}(1)\sum_{j_1,j_2=0}^{+\infty}\int\partial\Big(P_j\big(\partial u_{j_2}(\Box \partial u_{j_1}+\partial u_{j_1})\big)\Big)e^{\arctan(r-t)}\partial_tu_jdx,
\\&VI_{4}=\mathcal{O}(1)\sum_{j_1,j_2=0}^{+\infty}\int\partial\Big(P_j\big(\partial_t\partial u_{j_1}\partial_t\partial u_{j_2}-\nabla \partial u_{j_1}\cdot\nabla \partial u_{j_2}+\partial u_{j_1}\partial u_{j_2}\big)\Big)e^{\arctan(r-t)}\partial_tu_jdx.
\end{align*}
Here, the notation "good terms" refers to the terms involving $\mathcal{O}(\nabla(u_{j_1}\partial u_{j_2}))$ and $\mathcal{O}(\partial^{\leq1} u_{j_1}\partial^{\leq1}u_{j_2})$, which possess more favorable analytical properties. (For the terms involving $\mathcal{O}(\partial^{\leq1} u_{j_1}\partial^{\leq1}u_{j_2})$, we can shift the time derivative $\partial_t$ from $\partial_tu_j$ onto $\mathcal{O}(\partial^{\leq1} u_{j_1}\partial^{\leq1}u_{j_2})$ via integration by parts in the time variable $t$.) We therefore omit the tedious computational details.

We now turn to the analysis of $VI_{2}$. Then
\begin{align*}
VI_{2}&\leq C\sum_{j_1,j_2=0}^{+\infty}\int \left(\left|P_j\Big(\partial u_{j_1}P_{j_2}\partial^{2}(\Box u+u)\Big)\right|+\left|P_j\Big(\partial^{2} u_{j_1}P_{j_2}\partial(\Box u+u)\Big)\right|\right)|\partial_tu_j|dx
\\&\leq VI_{2,1}+VI_{2,2}+good\ terms\ involving \ C^{i\alpha}u\partial_i\partial_{\alpha}u\ and \ semilinear\ terms\ H(u),
\end{align*}
where
\begin{align*}
&VI_{2,1}= C\sum_{j_1,j_2=0}^{+\infty}\left(\left\|P_j\Big(\partial^{2} u_{j_1}P_{j_2}\partial(\partial u\partial\nabla u)\Big)\right\|_{L^2}\right)\|\partial_t u_j\|_{L^2},
\\&VI_{2,2}=C\sum_{j_1,j_2=0}^{+\infty}\left(\left\|P_j\Big(\partial u_{j_1}P_{j_2}\partial^{2}(\partial u\partial\nabla u)\Big)\right\|_{L^2}\right)\|\partial_t u_j\|_{L^2}.
\end{align*}
Using $\partial_t^2u=(\Box u+u)+\Delta u-u$ gives $\|\partial^3u\|_{L^2}+\|\partial^2u\|_{L^2}\leq C\varepsilon$, which implies
\begin{align}\label{dd1}
\|\partial(\partial u\partial^2 u)\|_{L^{2}}+\|\partial^2(\partial u\partial u)\|_{L^{2}} \leq C\left(\|\partial^2u\|_{L^{\infty}}\|\partial^2 u\|_{L^2}+\|\partial u\|_{L^{\infty}}\|\partial^3 u\|_{L^2}\right) \leq \frac{C\varepsilon^2}{\delta^3}(1+t)^{-1}.
\end{align}
For terms $VI_{2,1}$, we consider three cases: $j=\min\{j,j_1,j_2\}$, $j_1=\min\{j,j_1,j_2\}$, $j_2=\min\{j,j_1,j_2\}$. Using estimate \eqref{dd1}, inequality  \eqref{small2}, Lemma \ref{LP2}, notation \eqref{sum_syb} and $(1+j)/(2^{j\delta})\leq C/\delta$ gives
\begin{align*}
VI_{2,1}&\leq C\sum_{j_1\thicksim j_2\backsim j}\left(\left\|\partial^{2} u_{j_1}\right\|_{L^{\infty}}\|P_{j_2}\partial(\partial u\partial\nabla u)\|_{L^{2}}\|\partial_t u_j\|_{L^2}\right)
\leq \frac{C\varepsilon^4}{\delta^5}(1+t)^{-2}(2^j)^{-\frac52}.
\end{align*}
A straight calculation shows that
\begin{align*}
\|P_{j_2}\partial^{2}(\partial u\partial\nabla u)\|_{L^2}\leq C\sum_{j_3\backsim j_4\backsim j_2}\left\|P_{j_2}(\partial u_{j_3}\partial^3\nabla u_{j_4})\right\|_{L^2}+C\|\partial^2 u\|_{L^{\infty}}\|\partial^3 u\|_{L^2}
\leq \frac{C\varepsilon^2}{\delta}(1+t)^{-1}(2^{j_2})^{\frac12},
\end{align*}
which combining Lemma \ref{LP2} implies
\begin{align*}
VI_{2,2} \leq \frac{C\varepsilon^4}{\delta^5}(1+t)^{-2}(2^j)^{-2-28\delta},
\end{align*}
where we consider three cases: $j=\min\{j,j_1,j_2\},j_1=\min\{j,j_1,j_2\},j_2=\min\{j,j_1,j_2\}$.

Terms $VI_{3}$ are the same as $VI_{2}$. For terms $VI_1$, by integration by parts, we have
\begin{align*}
VI_{1}\leq VI_{1,1}+VI_{1,2}+VI_{1,3}+VI_{1,4}+VI_{1,5},
\end{align*}
where $\partial_{t,r}=(\partial_t,\partial_r)$ and
\begin{align*}
&VI_{1,1}=\mathcal{O}(1)\sum_{j_1,j_2=0}^{+\infty}\frac d{dt}\int \partial\left(P_j\left(\partial_t(\partial u_{j_1}\partial u_{j_2})\right)\right)e^{\arctan(r-t)}\partial_tu_jdx
\\&\qquad\qquad+\mathcal{O}(1)\sum_{j_1,j_2=0}^{+\infty}\frac{d}{dt}\int \partial\left(P_j\left(\partial u_{j_1}\partial u_{j_2}\right)\right)e^{\arctan(r-t)}\partial_t^2u_jdx,
\\&VI_{1,2}=\mathcal{O}(1)\sum_{j_1,j_2=0}^{+\infty}\int \frac1{1+(t-r)^2}\partial_{t,r}\left(P_j(\partial(\partial u_{j_1}\partial u_{j_2}))\right)e^{\arctan(r-t)}\partial_tu_jdx,
\\&VI_{1,3}=\mathcal{O}(1)\sum_{j_1,j_2=0}^{+\infty}\int \frac1{1+(t-r)^2}\partial\left(P_j\left(\partial u_{j_1}\partial u_{j_2}\right)\right)e^{\arctan(r-t)}\partial_t^2u_jdx,
\\&VI_{1,4}=\mathcal{O}(1)\sum_{j_1,j_2=0}^{+\infty}\int \frac1{1+(t-r)^2}\partial\left(P_j\left(\partial u_{j_1}\partial u_{j_2}\right)\right)e^{\arctan(r-t)}\partial_t\partial_ru_jdx,
\\&VI_{1,5}=C\int \left|P_j\big(\partial(\partial u\partial u)\big)\right||\Box\partial_tu_j+\partial_tu_j|dx.
\end{align*}

For terms $VI_{1,2}$, by H\"older inequality, Young's inequality and inequality \eqref{dd1},
\begin{align*}
VI_{1,2}\leq \frac{C\varepsilon^4}{\delta^6}(1+t)^{-2}+\frac1{32}\int \frac1{1+(t-r)^2}e^{\arctan(r-t)}|(\partial_t+\partial_r)u_j|^2dx+VI_{1,2,1},
\end{align*}
where using integration by parts with respect to $\partial_r$ and Hardy's inequality yields
\begin{align*}
VI_{1,2,1}&=-\mathcal{O}(1)\sum_{j_1,j_2=0}^{+\infty}\int \frac1{1+(t-r)^2}\partial_{t,r}\left(P_j(\partial(\partial u_{j_1}\partial u_{j_2}))\right)e^{\arctan(r-t)}\partial_ru_jdx
\\&\leq C\left\|\nabla^{\leq 1}\partial^2\left(P_j(\partial u\partial u)\right)\right\|^2_{L^2}+\frac1{32}\int \frac1{1+(t-r)^2}e^{\arctan(r-t)}|u_j|^2dx
\\&\leq \frac{C\varepsilon^4}{\delta^6}(1+t)^{-2}(2^{j})^{2}+\frac1{32}\int \frac1{1+(t-r)^2}e^{\arctan(r-t)}|u_j|^2dx.
\end{align*}

Now let's deal with terms $VI_{1,3}$. By integration by parts with respect to $t$, we have
\begin{align*}
VI_{1,3}\leq VI_{1,3,1}+VI_{1,3,2},
\end{align*}
where
\begin{align*}
&VI_{1,3,1}=\mathcal{O}(1)\sum_{j_1,j_2=0}^{+\infty}\frac{d}{dt}\int \frac1{1+(t-r)^2}\partial\left(P_j\left(\partial u_{j_1}\partial u_{j_2}\right)\right)e^{\arctan(r-t)}\partial_tu_jdx,
\\&VI_{1,3,2}=-\mathcal{O}(1)\sum_{j_1,j_2=0}^{+\infty}\int \partial_t\left(\frac1{1+(t-r)^2}\partial\left(P_j\left(\partial u_{j_1}\partial u_{j_2}\right)\right)e^{\arctan(r-t)}\right)\partial_tu_jdx.
\end{align*}
The terms $VI_{1,3,2}$ can be treated similarly to $VI_{1,2}$, yielding
\begin{align*}
VI_{1,3,2} \leq &\frac{C\varepsilon^4}{\delta^6}(1+t)^{-2}(2^{j})^{2}+\frac1{32}\int \frac1{1+(t-r)^2}e^{\arctan(r-t)}|(\partial_t+\partial_r)u_j|^2dx
\\&+\frac1{32}\int \frac1{1+(t-r)^2}e^{\arctan(r-t)}|u_j|^2dx.
\end{align*}

For terms $VI_{1,4}$, we have 
\begin{align*}
VI_{1,4}=VI_{1,4,1}-VI_{1,3},
\end{align*}
where
\begin{align*}
VI_{1,4,1}=\mathcal{O}(1)\sum_{j_1,j_2=0}^{+\infty}\int \frac1{1+(t-r)^2}\partial\left(P_j\left(\partial u_{j_1}\partial u_{j_2}\right)\right)e^{\arctan(r-t)}\partial_t(\partial_t+\partial_r)u_jdx.
\end{align*}
Integrating by parts with respect to $\partial_t$ and using a similar discussion as for terms $VI_{1,2}$ yields
\begin{align*}
VI_{1,4,1} \leq \frac{C\varepsilon^4}{\delta^6}(1+t)^{-2}+\frac1{32}\int \frac1{1+(t-r)^2}e^{\arctan(r-t)}|(\partial_t+\partial_r)u_j|^2dx+VI_{1,4,2},
\end{align*}
where 
\begin{align*}
VI_{1,4,2}=\mathcal{O}(1)\sum_{j_1,j_2=0}^{+\infty}\frac{d}{dt}\int \frac1{1+(t-r)^2}\partial\left(P_j\left(\partial u_{j_1}\partial u_{j_2}\right)\right)e^{\arctan(r-t)}(\partial_t+\partial_r)u_jdx.
\end{align*}

We now turn to $VI_{1,5}$. Using \eqref{dd1} gives
\begin{align*}
VI_{1,5}\leq C\left\|P_j\big(\partial(\partial u\partial u)\big)\right\|_{L^2}\|\partial P_j(\partial^{\leq 2} u\partial^{\leq 1} u)\|_{L^2}
\leq \frac{C\varepsilon^4}{\delta^6}(1+t)^{-2}.
\end{align*}

Finally let's handle the terms $VI_{4}$ as follows
\begin{align*}
&VI_{4}=VI_{4,1}+VI_{4,2},
\end{align*}
where
\begin{align*}
&VI_{4,1}=\mathcal{O}(1)\sum_{j_1,j_2=0}^{+\infty}\int\partial\Big(P_j\Big(\partial_t\partial u_{j_1}\partial_t\partial u_{j_2}-\nabla \partial u_{j_1}\cdot\nabla \partial u_{j_2}+\partial u_{j_1}\partial u_{j_2}\Big)\Big)e^{\arctan(r-t)}(\partial_t+\partial_r)u_jdx,
\\&VI_{4,2}=\mathcal{O}(1)\sum_{j_1,j_2=0}^{+\infty}\int\partial\Big(P_j\Big(\partial_t\partial u_{j_1}\partial_t\partial u_{j_2}-\nabla \partial u_{j_1}\cdot\nabla \partial u_{j_2}+\partial u_{j_1}\partial u_{j_2}\Big)\Big)e^{\arctan(r-t)}\partial_ru_jdx.
\end{align*}
We begin with $VI_{4,1}$. Applying identities \eqref{cc5}, \eqref{lemm2_add} and the estimate $\|[S,P_j]f\|_{L^2}\leq C\|f\|_{L^2}$ gives
\[
\left\|\langle t-r\rangle\partial P_jf\right\|_{L^2}\leq C\left\|ZP_jf\right\|_{L^2}+C\left\|SP_jf\right\|_{L^2}\leq C\left\|Z^{\leq1}P_jf\right\|_{L^2}+C\left\|P_j(Sf)\right\|_{L^2},
\]
where $f=\partial_t\partial u_{j_1}\partial_t\partial u_{j_2}-\nabla \partial u_{j_1}\cdot\nabla \partial u_{j_2}+\partial u_{j_1}\partial u_{j_2}$. Using a modification of Lemma \ref{product_KG}, we can estimate $P_jS(\partial_t\partial u_{j_1}\partial_t\partial u_{j_2}-\nabla \partial u_{j_1}\cdot\nabla \partial u_{j_2}+\partial u_{j_1}\partial u_{j_2})$,
which implies
\begin{align*}
VI_{4,1}\leq VI_{4,1,1}+VI_{4,1,2}+\frac1{32}\int \frac1{1+(t-r)^2}e^{\arctan(r-t)}|(\partial_t+\partial_r)u_j|^2dx,
\end{align*}
where
\begin{align*}
&VI_{4,1,1}\leq C\sum_{j_1\backsim j_2\backsim j}\|(\partial^{\leq1}Z^{\leq1}\partial u_{j_1})\partial^{\leq1}\partial u_{j_2}\|^2_{L^2},
\\&VI_{4,1,2}\leq C\sum_{j_1\backsim j_2\backsim j}\left\|S\partial u_{j_2}(P_{j_1}\partial(\Box u+u))\right\|_{L^2}^2.
\end{align*}
We begin with $VI_{4,1,1}$. Using \eqref{small2} gives
\begin{align*}
VI_{4,1,1}\leq C\sum_{j_1\backsim j_2\backsim j}\Big(\|\partial^{\leq1}Z^{\leq1}\partial u_{j_1}\|^2_{L^2}\|\partial^{\leq1}\partial u_{j_2}\|^2_{L^{\infty}}\Big)
\leq \frac{C\varepsilon^4}{\delta^6}(1+t)^{-2}.
\end{align*}
For terms $VI_{4,1,2}$, since
\begin{align*}
&\sum_{j_3,j_4=0}^{+\infty}\|P_{j_1}(\partial^{\leq 1} u_{j_3}\partial^{\leq 3} u_{j_4})\|_{L^{2}}\leq C\sum_{j_3\backsim j_4\backsim j_1}\|\partial^{\leq 1} u_{j_3}\|_{L^{\infty}}\|\partial^{\leq 3} u_{j_4}\|_{L^{2}}\leq \frac{C\varepsilon^2}{\delta^3}(1+t)^{-1}(2^{j_1})^{-\frac12},
\\&\sum_{j_3=0}^{+\infty}\sum_{j_4\leq j_3}\|P_{j_1}(\partial^{\leq 2} u_{j_3}\partial^{\leq 2} u_{j_4})\|_{L^{2}}\leq C\sum_{\substack{j_3\backsim j_4\thicksim j_1\\j_4\leq j_3}}\|\partial^{\leq 2} u_{j_3}\|_{L^{2}}\|\partial^{\leq 2} u_{j_4}\|_{L^{\infty}}\leq \frac{C\varepsilon^2}{\delta^3}(1+t)^{-1}(2^{j_1})^{-\frac32},
\end{align*}
where we considered three cases: $j_1,j_3$ or $j_4$ is the minimum among $\{j_1,j_3,j_4\}$, it follows that
\begin{align*}
\|P_{j_1}\partial(\Box u+u)\|_{L^{2}}\leq \frac{C\varepsilon^2}{\delta^3}(1+t)^{-1}(2^{j_1})^{-\frac12},
\end{align*}
which combining the estimate $\|S\partial u_{j_2}\|_{L^{\infty}}\leq C(t\|\partial^2 u_{j_2}\|_{L^{\infty}}+\|r\nabla\partial u_{j_2}\|_{L^{\infty}})\leq \frac{C\varepsilon}{\delta}(2^{j_2})^{-\delta}$, implies 
\begin{align*}
VI_{4,1,2} &\leq C\sum_{j_1\backsim j_2\backsim j}\left\|S(\partial u_{j_2})\right\|_{L^{\infty}}^2\left\|P_{j_1}\partial(\Box u+u)\right\|_{L^{2}}^2
\leq \frac{C\varepsilon^6}{\delta^9}(1+t)^{-2}.
\end{align*}

For terms $VI_{4,2}$, by integration by parts, we have
\begin{align*}
VI_{4,2}\leq VI_{4,2,1}+VI_{4,2,2},
\end{align*}
where
\begin{align*}
&VI_{4,2,1}=C\sum_{j_1\backsim j_2\backsim j}\int\left|\nabla^{\leq1}\partial\Big(P_j\Big(\partial_t\partial u_{j_1}\partial_t\partial u_{j_2}-\nabla \partial u_{j_1}\cdot\nabla \partial u_{j_2}+\partial u_{j_1}\partial u_{j_2}\Big)\Big)\right|e^{\arctan(r-t)}|u_j|dx,
\\&VI_{4,2,2}=C\sum_{j_1\backsim j_2\backsim j}\int\left|\frac1r\partial\Big(P_j\Big(\partial_t\partial u_{j_1}\partial_t\partial u_{j_2}-\nabla \partial u_{j_1}\cdot\nabla \partial u_{j_2}+\partial u_{j_1}\partial u_{j_2}\Big)\Big)\right|e^{\arctan(r-t)}|u_j|dx.
\end{align*}
By a similar discussion of terms $VI_{4,1}$, if $0<\varepsilon\leq \delta^6$, we have
\[
VI_{4,2,1}\leq \frac{C\varepsilon^4}{\delta^6}(1+t)^{-2}(2^j)^2+\frac1{32}\int \frac1{1+(t-r)^2}e^{\arctan(r-t)}|u_j|^2dx.
\]
Applying Hardy's inequality, the terms $VI_{4,2,2}$ satisfy the same bounds as $VI_{4,2,1}$.

Combining the discussion above shows that there exists a positive constant $C_{low1}$ such that
\begin{align}\label{low1_per}
\frac{d}{dt}N_{1,j}(u(t))\leq C_{low1}\frac{\varepsilon^4}{\delta^6}(1+t)^{-2}(2^j)^2+VI_{1,1}+VI_{1,3,1}+VI_{1,4,2}.
\end{align}
Noting that $e^{-\pi/2}\leq e^{arctan(r-t)}\leq e^{\pi/2}$, we get $0.2\leq e^{arctan(r-t)}\leq 5$. Then there exists $C_0$ depending only on $\widetilde{C}$ such that
\begin{align}\label{C_0_1}
\sum_{j=0}^{+\infty}(2^{j})^{5+64\delta}N_{1,j}(u(0)) \leq \frac{C_0}{1000}\varepsilon^2.
\end{align}
By using the high-order energy estimates, we get
\[
N_{1,j}(u(t))\leq 5\|\partial^{\leq1} u_j\|^2_{L^2}\leq 5C_2\varepsilon^2(2^{j})^{-5-64\delta}(1+t)^{\delta^2},
\]
which implies that for $t\geq 0$,
\[
\frac{1}{\delta}\left(N_{1,j}(u(t))\right)^{\frac{1}{\delta}-1}\leq \frac{1}{\delta}(5C_2)^{\frac{1}{\delta}-1}(\varepsilon)^{\frac2{\delta}-2}(2^{j})^{-\frac5{\delta}-59+64\delta}(1+t)^{\delta-\delta^2}.
\]
Multiplying both sides of inequality \eqref{low1_per} by $\frac{1}{\delta}\left(N_{1,j}(u(t))\right)^{\frac{1}{\delta}-1}$, for $t\geq 0$, we have
\begin{align}\label{low2}
\frac d{dt}\left(\left(N_{1,j}(u(t))\right)^{\frac1{\delta}}\right)\leq& 
C_{low1}\frac{1}{\delta^7}(5C_2)^{\frac{1}{\delta}-1}(\varepsilon)^{\frac2{\delta}+2}(2^{j})^{-\frac5{\delta}-57+64\delta}(1+t)^{-2+\delta-\delta^2}
\nonumber\\&+\frac{1}{\delta}\left(N_{1,j}(u(t))\right)^{\frac{1}{\delta}-1}(VI_{1,1}+VI_{1,3,1}+VI_{1,4,2}).
\end{align}
Hence, for any $T\geq 0$ and $0<\delta<0.5$, integrating both sides of \eqref{low2} over the interval $[0,T]$ yields
\begin{align}\label{add6}
\left(N_{1,j}(u(T))\right)^{\frac1{\delta}}\leq& \left(N_{1,j}(u(0))\right)^{\frac1{\delta}}+\frac{2C_{low1}}{\delta^7}(5C_2)^{\frac{1}{\delta}-1}\varepsilon^{\frac2{\delta}+2}(2^j)^{-\frac5{\delta}-57+64\delta}
\nonumber\\&+\int_0^T\frac{1}{\delta}\left(N_{1,j}(u(t))\right)^{\frac{1}{\delta}-1}(VI_{1,1}+VI_{1,3,1}+VI_{1,4,2})dt.
\end{align}
Let 
\[
R_{1,1}=\mathcal{O}(1)\int \partial\left(P_j\left(\partial_t(\partial u\partial u)\right)\right)e^{\arctan(r-t)}\partial_tu_jdx
+\mathcal{O}(1)\int \partial\left(P_j\left(\partial u\partial u\right)\right)e^{\arctan(r-t)}\partial_t^2u_jdx.
\]
Thus
\begin{align*}
\left|R_{1,1}\right|\leq C\int|P_j(\partial^{\leq2}(\partial u\partial u))||\partial u_j|dx+C\int|P_j(\partial(\partial u\partial u))||\partial^2 u_j|dx
\leq \frac{C\varepsilon^3}{\delta^3}(1+t)^{-1}(2^{j})^{-\frac32},
\end{align*}
which implies
\begin{align}\label{add4}
\left|\frac{1}{\delta}\left(N_{1,j}(u(t))\right)^{\frac{1}{\delta}-1}R_{1,1}\right|\leq \frac{C}{\delta^4}(5C_2)^{\frac{1}{\delta}-1}(\varepsilon)^{\frac2{\delta}+1}(2^{j})^{-\frac5{\delta}-60.5+64\delta}.
\end{align}
Using \eqref{low1} gives
\begin{align}\label{add5}
\left|\frac{d}{dt}\left(N_{1,j}(u(t))\right)\right|\leq C\int \left|P_j\left(\partial^{\leq2} u\partial^{\leq1} u\right)\right||\partial_tu_j|dx\leq \frac{C\varepsilon^3}{\delta^3}(1+t)^{-1}(2^j)^{-\frac52},
\end{align} 
Applying \eqref{add4} and \eqref{add5}, for $\delta$ in $(0,0.5]$, we obtain
\begin{align}\label{add7}
&\quad\left|\int_0^T\frac{1}{\delta}\left(N_{1,j}(u(t))\right)^{\frac{1}{\delta}-1}(VI_{1,1})dt\right|
\nonumber\\&\leq \left|\int_0^T\frac{d}{dt}\left(\frac{1}{\delta}\left(N_{1,j}(u(t))\right)^{\frac{1}{\delta}-1}R_{1,1}\right)dt
-\int_0^T\left(\frac{1-\delta}{\delta^2}\left(N_{1,j}(u(t))\right)^{\frac{1}{\delta}-2}\right)\frac{d}{dt}\left(N_{1,j}(u(t))\right)R_{1,1}dt\right|
\nonumber\\&\leq \frac{C}{\delta^4}(5C_2)^{\frac{1}{\delta}-1}(\varepsilon)^{\frac2{\delta}+1}(2^{j})^{-\frac5{\delta}-60.5+64\delta}
+\frac{C}{\delta^8}(5C_2)^{\frac{1}{\delta}-2}(\varepsilon)^{\frac2{\delta}+2}(2^{j})^{-\frac5{\delta}-58+128\delta}.
\end{align}
Terms involving $VI_{1,3,1},VI_{1,4,2}$ can be treated similarly and they own the same bound of terms involving $VI_{1,1}$. Since for any positive constants $a,b$ and any $\delta$ in $(0,1)$, the inequality
\[
(a+b)^{\delta}\leq 2(a^{\delta}+b^{\delta}),
\]
holds, using \eqref{add6} and \eqref{add7}, we deduce that
\begin{align*}
N_{1,j}(u(T))\leq& 2N_{1,j}(u(0))+C\left(\frac{2C_{low1}}{\delta^7}\right)^{\delta}(5C_2)^{1-\delta}\varepsilon^{2+2\delta}(2^j)^{-5-57\delta+64\delta^2}
\\&+C\left(\frac{1}{\delta^4}\right)^{\delta}(5C_2)^{1-\delta}(\varepsilon)^{2+\delta}(2^{j})^{-5-60\delta+64\delta^2}
+C\left(\frac{1}{\delta^8}\right)^{\delta}(5C_2)^{1-2\delta}(\varepsilon)^{2+2\delta}(2^{j})^{-5-58\delta+128\delta^2}.
\end{align*}
Consequently, if $\delta$ in $(0,0.001]$ is a fixed constant and $\varepsilon$ in $(0,\delta^6]$ is small enough, for any positive time $t$, using \eqref{C_0_1} gives
\[
M_{low1}(u(t))\leq 5\sum_{j=0}^{+\infty}(2^j)^{5+56\delta}N_{1,j}(u(t))\leq \frac{C_0}{32}\varepsilon^2.
\]

\subsubsection{\textbf{Estimates for $M_{low3}(u(t))$}}\label{section1}
Here we do more refined handling to $\partial_{\gamma}u\partial_{i}\partial_{\alpha}u$ as follows: for $\alpha=\gamma=0$, we get $2\partial_tu\partial_{i}\partial_tu=\partial_i\left((\partial_tu)^2\right)$; and for the case that at least one of $\alpha,\gamma$ is $1,2,3$, we employ identity  \eqref{div1}. It is convenient to rewrite the nonlinear terms as follows.
\begin{align*}
&C^{i\alpha\gamma}Z(\partial_{\gamma}u\partial_{\alpha}\partial_iu)=\mathcal{O}(\nabla(\partial Zu\partial u))+\mathcal{O}(\partial_t(\nabla Zu\nabla u))+\mathcal{O}(\partial(\partial u\partial u)),
\\&C^{i\alpha}Z(u\partial_{\alpha}\partial_iu)=\mathcal{O}(\nabla(Zu\partial u+u\partial Zu))+\mathcal{O}(\partial(u\partial u))+\mathcal{O}(\partial Z^{\leq1}u\partial u).
\end{align*}
Therefore we have
\begin{align}\label{low6}
&\Box (Zu)_j+(Zu)_j
\nonumber\\=&P_j\left(\mathcal{O}(\nabla(\partial^{\leq1} Zu\partial^{\leq1} u))+\mathcal{O}(\partial_t(\nabla Zu\nabla u))\right)
+P_j\left(\mathcal{O}(\partial(\partial^{\leq1}u\partial u))\right)+P_j(\mathcal{O}(\partial^{\leq1}Z^{\leq1}u\partial^{\leq1}u)).
\end{align}

In Section \ref{section1}, let 
\begin{align*}
v=(Zu)_j,\quad N_{2,j}(u(t))=\int e^{\arctan(r-t)}\left(|\partial^{\leq1} (Zu)_j|^2\right)dx.
\end{align*}
Similarly to \eqref{low1},
\begin{align}\label{low4}
\frac12\frac d{dt}N_{2,j}(u(t))+\frac12\int e^{\arctan(r-t)}\frac1{1+(t-r)^2}\left(v^2+(\partial_tv+\partial_rv)^2\right)dx\leq VII_1+VII_2+VII_3,
\end{align}
where
\begin{align*}
&VII_1=\sum_{j_1,j_2=0}^{+\infty}\int P_j \left(\mathcal{O}\left(\partial_t(\nabla u_{j_1}\nabla (Zu)_{j_2})\right)+\mathcal{O}\left(\nabla(\partial^{\leq1}u_{j_1}\partial^{\leq1} (Zu)_{j_2})\right)\right)e^{\arctan(r-t)}\partial_tvdx,
\\&VII_2=\sum_{j_1,j_2=0}^{+\infty}\int P_j\mathcal{O}\left(\partial(\partial^{\leq1} u_{j_1}\partial u_{j_2})\right)e^{\arctan(r-t)}\partial_tvdx,
\\&VII_3=\sum_{j_1,j_2=0}^{+\infty}\int P_j\mathcal{O}\left(\partial^{\leq1}u_{j_1}\partial^{\leq1}(Z^{\leq1}u)_{j_2})\right)e^{\arctan(r-t)}\partial_tvdx.
\end{align*}

Terms $VII_2$ are similar to $VI$. Now we focus on $VII_1$.  Applying Lemma \ref{product_KG} to $VII_1$ yields
\[
VII_1\leq VII_{1,1}+VII_{1,2}+VII_{1,3}+VII_{1,4}+good \ terms,
\]
where 
\begin{align*}
&VII_{1,1}=\mathcal{O}(1)\sum_{j_1,j_2=0}^{+\infty}\int \partial_t\Big(P_j\left(\Box(w_1w_2)+w_1w_2\right)\Big)e^{\arctan(r-t)}\partial_tvdx,
\\&VII_{1,2}=\mathcal{O}(1)\sum_{j_1,j_2=0}^{+\infty}\int\partial_t\Big(P_j\big(w_1(\Box w_2+w_2)\big)\Big)e^{\arctan(r-t)}\partial_tvdx,
\\&VII_{1,3}=\mathcal{O}(1)\sum_{j_1,j_2=0}^{+\infty}\int\partial_t\Big(P_j\big(w_2(\Box w_1+w_1)\big)\Big)e^{\arctan(r-t)}\partial_tvdx,
\\&VII_{1,4}=\mathcal{O}(1)\sum_{j_1,j_2=0}^{+\infty}\int\partial_t\Big(P_j\Big(\partial_tw_1\partial_tw_2-\nabla w_1\cdot\nabla w_2+w_1w_2\Big)\Big)e^{\arctan(r-t)}\partial_tvdx
\\&\qquad\qquad+\mathcal{O}(1)\sum_{j_1,j_2=0}^{+\infty}\int\nabla\Big(P_j\Big(\partial_tw_3\partial_tw_4-\nabla w_3\cdot\nabla w_4+w_3w_4\Big)\Big)e^{\arctan(r-t)}\partial_tvdx,
\end{align*} 
with 
\[
w_1=\nabla u_{j_1},\quad w_2=\nabla (Zu)_{j_2},\quad w_3=\partial^{\leq1} u_{j_1},\quad w_4=\partial^{\leq1} (Zu)_{j_2}.
\]
The definitions of $w_1,w_2,w_3,w_4$ are applicable to, and only to, Section \ref{section1}.
Here, by "good terms", we mean terms involving $\nabla(\partial^{\leq1} u_{j_1}\partial^{\leq1} (Zu)_{j_2})$, which can be handled in a similar fashion to $VI_{i}$, $i=1,2,3$, using $\|\nabla P_jf\|_{L^2}\leq C2^j\|P_jf\|_{L^2}$ for any function $f(x)$.

Consider first $VII_{1,2}$. Then
\begin{align*}
VII_{1,2}&\leq C\sum_{j_1,j_2=0}^{+\infty}\int \left(\left|P_j\Big(\nabla u_{j_1}P_{j_2}\partial_t\nabla Z(\Box u+u)\Big)\right|+\left|P_j\Big(\partial_t\nabla u_{j_1}P_{j_2}\nabla Z(\Box u+u)\Big)\right|\right)|\partial_tv|dx
\\&\leq VII_{1,2,1}+VII_{1,2,2}+good\ terms \ involving \ semilinear\ terms\ H(u),
\end{align*}
where 
\begin{align*}
&VII_{1,2,1}\leq C\sum_{j_1,j_2=0}^{+\infty}\left\|P_j\left(\left(\partial_t\nabla u_{j_1}\right)\left(P_{j_2}\nabla Z(\partial^{\leq1} u\partial\nabla u)\right)\right)\left(\partial_tv\right)\right\|_{L^1},
\\&VII_{1,2,2}\leq C\sum_{j_1,j_2=0}^{+\infty}\left\|P_j\left(\left(\nabla u_{j_1}\right)\left(P_{j_2}\partial_t\nabla Z(\partial^{\leq1} u\partial\nabla u)\right)\right)\left(\partial_tv\right)\right\|_{L^1}.
\end{align*}
For terms $VII_{1,2,1}$, we consider three cases depending on which of $j,j_1,j_2$ is the minimum among $\{j,j_1,j_2\}$. Using Lemma \ref{LP2}, notation \eqref{sum_syb}, the identity \eqref{space_div} and the estimate
\begin{align*}
\|P_{j_2}\nabla Z(\partial^{\leq1} u\partial\nabla u)\|_{L^{2}} 
\leq C2^{j_2}\Big(\|\partial^{\leq1} Z^{\leq 1} u\|_{L^2}\|\partial\nabla u\|_{L^{\infty}}+\|\partial^{\leq1} u\|_{L^{\infty}}\|\partial^{\leq2} Z^{\leq 1} u)\|_{L^{2}}\Big)
\leq \frac{C\varepsilon^2}{\delta^3}\frac1{1+t}2^{j_2},
\end{align*}
we have
\begin{align*}
VII_{1,2,1}\leq& \frac{C}{1+t}\sum_{j_1\thicksim j_2\backsim j}\left(\left(\left\|Z\partial  u_{j_1}\right\|_{L^2}\|\partial_tv\|_{L^{\infty}}+\left\|\partial^2  u_{j_1}\right\|_{L^2}\|r\partial_tv\|_{L^{\infty}}\right)\|P_{j_2}\nabla Z(\partial^{\leq1} u\partial\nabla u)\|_{L^{2}}\right)
\\ \leq &\frac{C\varepsilon^4}{\delta^4}(1+t)^{-2+2\delta^2}2^j.
\end{align*}

Using inequality \eqref{small1}, Lemma \ref{LP2}, notation \eqref{sum_syb} and Lemma \ref{radial_decay} yields
\begin{align}\label{add15}
&\quad\left\|P_{j_2}\nabla\partial_tZ(\partial^{\leq1} u\partial\nabla u)\right\|_{L^2(\mathbb R^3)}
\nonumber\\& \leq C2^{j_2}\left(\frac{\varepsilon^2}{\delta^3}(1+t)^{-1}+\left\|P_{j_2}(\partial^{\leq1} u\partial\nabla\partial_tZ u)\right\|_{L^2(\mathbb R^3)}+\left\|P_{j_2}(\partial^{\leq1} Zu\partial^2\nabla u)\right\|_{L^2(\mathbb R^3)}\right)
\nonumber\\& \leq \frac{C\varepsilon^2}{\delta^3}2^{j_2}(1+t)^{-1}+C2^{j_2}\sum_{j_3,j_4=0}^{+\infty}\left(\left\|P_{j_2}(\partial^{\leq1} u_{j_3}\partial^2\nabla (Zu)_{j_4})\right\|_{L^2}+\left\|P_{j_2}(\partial^{\leq1} (Zu)_{j_3}\partial^2\nabla u_{j_4})\right\|_{L^2}\right)
\nonumber\\& \leq \frac{C\varepsilon^2}{\delta^3}2^{j_2}(1+t)^{-1}+C(1+t)^{-1}2^{j_2}\sum_{j_3\backsim j_4\backsim j_2}\left(\left\|\langle r\rangle\partial^{\leq1} u_{j_3}Z\partial^2 (Zu)_{j_4}\right\|_{L^2}+\left\|\langle r\rangle\partial^{\leq1} (Zu)_{j_3}Z\partial^2 u_{j_4}\right\|_{L^2}\right)
\nonumber\\& \leq \frac{C\varepsilon^2}{\delta^3}2^{j_2}(1+t)^{-1}+\frac{C}{\delta^3}(1+t)^{-1+\delta^2}2^{j_2}\varepsilon^2(2^{j_2})^{\frac12},
\end{align}
which combining a similar fashion of discussion of $VII_{1,2,1}$ implies
\begin{align*}
VII_{1,2,2} \leq \frac{C\varepsilon^4}{\delta^4}(1+t)^{-2+2\delta^2}(2^j)^{\frac32}.
\end{align*}

Now let's consider terms $VII_{1,3}$. Using Lemma \ref{LP2} and notation \eqref{sum_syb} yields that
\begin{align*}
VII_{1,3}\leq VII_{1,3,1}+good\ terms\ invovling\ semilinear\ terms,
\end{align*}
where
\begin{align*}
VII_{1,3,1}=C\sum_{j_1\backsim j_2\backsim j}\|P_j\big(\partial^{\leq1} \nabla (Zu)_{j_2}P_{j_1}\partial^{\leq1}\nabla(\partial^{\leq1} u\partial\nabla u)\big)\partial_tv\|_{L^1},
\end{align*}
Using Lemma \ref{LP2} and inequality \eqref{small2} gives the following estimates
\begin{align*}
\|P_{j_1}\partial^{\leq1}\nabla(\partial^{\leq1} u\partial\nabla u)\|_{L^2}=\sum_{j_3\backsim j_4\backsim j_1}\|P_{j_1}\partial^{\leq1}\nabla(\partial^{\leq1} u_{j_3}\partial\nabla u_{j_4})\|_{L^2}\leq \frac{C\varepsilon^2}{\delta^2}(1+t)^{-1}(2^{j_1})^{\frac12},
\end{align*}
which combining identity \eqref{space_div} and $F(P_j(xf)-xP_jf)=-i\partial_{\xi}\chi_j(\xi)\widehat{f}$  yields
\begin{align*}
&VII_{1,3,1}
\\ \leq &\frac{C}{1+t}\sum_{j_1\backsim j_2\backsim j}\Big(\|P_j\big(x\partial^{\leq2} (Zu)_{j_2}P_{j_1}\partial^{\leq1}\nabla(\partial^{\leq1} u\partial\nabla u)\big)\partial_tv\|_{L^1}
\\&\qquad\qquad\qquad+\|P_j\big(Z\partial^{\leq1} (Zu)_{j_2}P_{j_1}\partial^{\leq1}\nabla(\partial^{\leq1} u\partial\nabla u)\big)\partial_tv\|_{L^1}\Big)
\\ \leq &\frac{C}{1+t}\sum_{j_1\backsim j_2\backsim j}\|P_{j_1}\partial^{\leq1}\nabla(\partial^{\leq1} u\partial\nabla u)\|_{L^2}\left(\|x\partial_tv\|_{L^{\infty}}\|\partial^{\leq2} (Zu)_{j_2}\|_{L^2}+\|Z^{\leq1}\partial^{\leq1} (Zu)_{j_2}\|_{L^2}\|\partial_tv\|_{L^{\infty}}\right)
\\ \leq &\frac{C\varepsilon^4}{\delta^4}(1+t)^{-2+2\delta^2}(2^j)^{\frac12}.
\end{align*}

For terms $VII_{1,1}$, similarly to terms $VI_{1}$, we have
\begin{align*}
VII_{1,1} \leq VII_{1,1,1}+VII_{1,1,2}+VII_{1,1,3}+VII_{1,1,4}+VII_{1,1,5},
\end{align*}
where
\begin{align*}
&VII_{1,1,1}=\mathcal{O}(1)\sum_{j_1,j_2=0}^{+\infty}\frac d{dt}\int \partial\left(P_j\left(\partial_t(w_1w_2)\right)\right)e^{\arctan(r-t)}\partial_tvdx
\\&\qquad\qquad+\mathcal{O}(1)\sum_{j_1,j_2=0}^{+\infty}\frac{d}{dt}\int \partial\left(P_j\left(w_1w_2\right)\right)e^{\arctan(r-t)}\partial_t^2vdx,
\\&VII_{1,1,2}=\mathcal{O}(1)\sum_{j_1,j_2=0}^{+\infty}\int \frac1{1+(t-r)^2}\partial_{t,r}\left(P_j(\partial(w_1w_2))\right)e^{\arctan(r-t)}\partial_tvdx,
\\&VII_{1,1,3}=\mathcal{O}(1)\sum_{j_1,j_2=0}^{+\infty}\int \frac1{1+(t-r)^2}\partial\left(P_j\left(w_1w_2\right)\right)e^{\arctan(r-t)}\partial_t^2vdx,
\\&VII_{1,1,4}=\mathcal{O}(1)\sum_{j_1,j_2=0}^{+\infty}\int \frac1{1+(t-r)^2}\partial\left(P_j\left(w_1w_2\right)\right)e^{\arctan(r-t)}\partial_t\partial_rvdx,
\\&VII_{1,1,5}=C\sum_{j_1,j_2=0}^{+\infty}\int \left|P_j\big(\partial(w_1w_2)\big)\right||\Box\partial_tv+\partial_tv|dx.
\end{align*}
For terms $VII_{1,1,2}$, by H\"older inequality and Young's inequality,
\begin{align*}
VII_{1,1,2}\leq VII_{1,1,2,1}+VII_{1,1,2,2},
\end{align*}
where
\begin{align*}
&VII_{1,1,2,1}=C\sum_{j_1,j_2=0}^{+\infty}\left\|\partial^2\left(P_j(\partial u_{j_1}\partial (Zu)_{j_2})\right)\right\|^2_{L^2}+\frac1{32}\int \frac1{1+(t-r)^2}e^{\arctan(r-t)}|(\partial_t+\partial_r)v|^2dx,
\\&VII_{1,1,2,2}=\mathcal{O}(1)\sum_{j_1,j_2=0}^{+\infty}\int \frac1{1+(t-r)^2}\partial_{t,r}\left(P_j(\partial(\partial u_{j_1}\partial (Zu)_{j_2}))\right)e^{\arctan(r-t)}\partial_rvdx
\end{align*}
For $VII_{1,1,2,1}$, we divide them into three cases: $j,j_1$, or $j_2$ is the minimum among $\{j,j_1,j_2\}$. Thus
\begin{align*}
VII_{1,1,2,1} \leq \frac{C\varepsilon^4}{\delta^6}(1+t)^{-2+2\delta^2}(2^j)^2+\frac1{32}\int \frac1{1+(t-r)^2}e^{\arctan(r-t)}|(\partial_t+\partial_r)v|^2dx.
\end{align*}
For $VII_{1,1,2,2}$, by integration by parts with respect to $\partial_r$ from $\partial_rv$ and Hardy's inequality, we get 
\begin{align*}
VII_{1,1,2,2}\leq \frac{C\varepsilon^4}{\delta^6}(1+t)^{-2+2\delta^2}(2^{j})^{4}+\frac1{32}\int \frac1{1+(t-r)^2}e^{\arctan(r-t)}|v|^2dx.
\end{align*}

Similarly to terms $VI_{1,3},VI_{1,4}$, we get
\begin{align*}
VII_{1,1,3}+VII_{1,1,4}\leq  &\frac{C\varepsilon^4}{\delta^6}(1+t)^{-2+2\delta^2}(2^{j})^{4}+\frac1{32}\int \frac1{1+(t-r)^2}e^{\arctan(r-t)}|(\partial_t+\partial_r)v|^2dx
\\&+\frac1{32}\int \frac1{1+(t-r)^2}e^{\arctan(r-t)}|v|^2dx+VII_{1,1,3,1}+VII_{1,1,4,2},
\end{align*}
where
\begin{align*}
&VII_{1,1,3,1}=\mathcal{O}(1)\sum_{j_1,j_2=0}^{+\infty}\frac{d}{dt}\int \frac1{1+(t-r)^2}\partial\left(P_j\left(w_1w_2\right)\right)e^{\arctan(r-t)}\partial_tvdx,
\\&VII_{1,1,4,2}=\mathcal{O}(1)\sum_{j_1,j_2=0}^{+\infty}\frac{d}{dt}\int \frac1{1+(t-r)^2}\partial\left(P_j\left(w_1w_2\right)\right)e^{\arctan(r-t)}(\partial_t+\partial_r)vdx.
\end{align*}

Now we come to $VII_{1,1,5}$. By a similar discussion of terms $VII_{1,1,2,1}$, we have
\begin{align*}
VII_{1,1,5}\leq C\sum_{j_1,j_2=0}^{+\infty}\left\|P_j\big(\partial(\partial u_{j_1}\partial (Zu)_{j_2})\big)\right\|_{L^2}\|P_j\partial_tZ (\partial^{\leq 1} u\partial^{\leq 2} u)\|_{L^2}
\leq \frac{C\varepsilon^4}{\delta^6}(1+t)^{-2+2\delta^2}2^j.
\end{align*}

Finally, it suffices to deal with terms $VII_{1,4}$.
\begin{align*}
&VII_{1,4}=VII_{1,4,1}+VII_{1,4,2}+VII_{1,4,3}+VII_{1,4,4},
\end{align*}
where
\begin{align*}
&VII_{1,4,1}=\mathcal{O}(1)\sum_{j_1\backsim j_2\backsim j}\int\partial\Big(P_j\Big(\partial_tw_1\partial_tw_2-\nabla w_1\cdot\nabla w_2+w_1w_2\Big)\Big)e^{\arctan(r-t)}(\partial_t+\partial_r)vdx,
\\&VII_{1,4,2}=\mathcal{O}(1)\sum_{j_1\backsim j_2\backsim j}\int\partial\Big(P_j\Big(\partial_tw_1\partial_tw_2-\nabla w_1\cdot\nabla w_2+w_1w_2\Big)\Big)e^{\arctan(r-t)}\partial_rvdx,
\\&VII_{1,4,3}=\mathcal{O}(1)\sum_{j_1\backsim j_2\backsim j}\int\nabla\Big(P_j\Big(\partial_tw_3\partial_tw_4-\nabla w_3\cdot\nabla w_4+w_3w_4\Big)\Big)e^{\arctan(r-t)}(\partial_t+\partial_r)vdx,
\\&VII_{1,4,4}=\mathcal{O}(1)\sum_{j_1\backsim j_2\backsim j}\int\nabla\Big(P_j\Big(\partial_tw_3\partial_tw_4-\nabla w_3\cdot\nabla w_4+w_3w_4\Big)\Big)e^{\arctan(r-t)}\partial_rvdx.
\end{align*}
Let
\[
VII_{1,4,1,j_1,j_2}=\Big|\int\partial\Big(P_j\Big(\partial_tw_1\partial_tw_2-\nabla w_1\cdot\nabla w_2+w_1w_2\Big)\Big)e^{\arctan(r-t)}(\partial_t+\partial_r)vdx\Big|.
\]
Similarly to terms $VI_{4,1}$, we get 
\begin{align}\label{combin2}
VII_{1,4,1,j_1,j_2}\leq \left(VII_{1,4,1,1,j_1,j_1}+VI_{1,4,1,2,j_1,j_2}\right)\left(\int \frac{e^{\arctan(r-t)}}{1+(t-r)^2}|(\partial_t+\partial_r)v|^2dx\right)^{\frac12},
\end{align}
where 
\begin{align*}
&VII_{1,4,1,1,j_1,j_2}\leq C\Big(\|(\partial^{\leq1}Z^{\leq1}\nabla u_{j_1})\partial^{\leq1}\nabla (Zu)_{j_2}\|_{L^2}+\|(\partial^{\leq1}Z^{\leq1}\nabla (Zu)_{j_2})\partial^{\leq1}\nabla u_{j_1}\|_{L^2}\Big),
\\&VII_{1,4,1,2,j_1,j_2}\leq C\Big(\|(S\nabla (Zu)_{j_2})P_{j_1}\nabla(\Box u+u)\|_{L^2}+\|(S\nabla u_{j_1})P_{j_2}\nabla Z(\Box u+u)\|_{L^2}\Big).
\end{align*}
First we come to $VII_{1,4,1,1,j_1,j_2}$. By using \eqref{space_div}, we have
\begin{align}\label{add0823}
VII_{1,4,1,1,j_1,j_2}\leq \frac{C\varepsilon^2}{\delta^2}(1+t)^{-1+\delta^2}\begin{cases}
& (2^{j_1})^{\frac12-12\delta}, \ when \ j=min\{j,j_1,j_2\},
\\& (2^{j})^{\frac12-12\delta}, \ when \ j_1=min\{j,j_1,j_2\},
\\& (2^{j})^{\frac12-12\delta}, \ when \ j_2=min\{j,j_1,j_2\}.
\end{cases}
\end{align}
By calculation, we get
\begin{align}\label{add8}
\|(S\nabla (Zu)_{j_2})P_{j_1}\nabla(\Box u+u)\|_{L^2}\leq &C2^{j_1}\|\partial(\nabla (Zu)_{j_2})\|_{L^2}\|(1+t+r)P_{j_1}(\partial^{\leq1}u\partial^{\leq2}u)\|_{L^{\infty}}
\nonumber\\ \leq &\frac{C\varepsilon^3}{\delta^3}(1+t)^{-1+\delta^2}(2^{j_2})^{-\frac12-12\delta}(2^{j_1})^{1-12\delta}.
\end{align}
Using \eqref{space_div} shows that
\begin{align*}
\|P_{j_2}Z(\Box u +u)\|_{L^2}&\leq C\left(\|P_{j_2}(\partial^{\leq1}Zu\partial\nabla u)\|_{L^2}+\|P_{j_2}(\partial^{\leq1}u\partial\nabla Z u)\|_{L^2}\right)+\frac{C\varepsilon^2}{\delta}(1+t)^{-1}(2^{j_2})^{-1}
\\&\leq\frac{C\varepsilon^2}{\delta^2}(1+t)^{-1+\delta^2}(2^{j_2})^{-\frac12-12\delta},
\end{align*}
which combining $\|S\nabla u_{j_1}\|_{L^{\infty}}\leq C\left(t\|\partial\nabla u_{j_1}\|_{L^{\infty}}+\|r\partial\nabla u_{j_1}\|_{L^{\infty}}\right)\leq \frac{C\varepsilon}{\delta}(2^{j_1})^{-\delta}$ implies that 
\begin{align}\label{add9}
\|(S\nabla u_{j_1})P_{j_2}\nabla Z(\Box u+u)\|_{L^2}\leq \frac{C\varepsilon^3}{\delta^3}(1+t)^{-1+\delta^2}(2^{j_2})^{\frac12-12\delta}(2^{j_1})^{-\delta}.
\end{align}
Combining \eqref{add8} and \eqref{add9} yields
\begin{align}\label{add19}
VII_{1,4,1,2,j_1,j_2}\leq \frac{C\varepsilon^3}{\delta^3}(1+t)^{-1+\delta^2}\left((2^{j_2})^{-\frac12-12\delta}(2^{j_1})^{1-\delta}+(2^{j_2})^{\frac12-12\delta}(2^{j_1})^{-\delta}\right).
\end{align}
Thus using $(1+j)(2^j)^{-12\delta}\leq \frac{C}{\delta}$ gives 
\begin{align*}
VII_{1,4,1}\leq &\frac1{32}\int \frac1{1+(t-r)^2}e^{\arctan(r-t)}|(\partial_t+\partial_r)v|^2dx+\frac{C\varepsilon^4}{\delta^6}(1+t)^{-2+2\delta^2}2^j
\\&+\frac{C\varepsilon^6}{\delta^6}(1+t)^{-2+2\delta^2}(2^j)^{2-2\delta}
+C\sum_{j_1=j}^{+\infty}\sum_{j_2=j_1-2}^{j_1+2}VII_{1,4,1,j_1,j_2}.
\end{align*}
The last term becomes problematic when estimates \eqref{add0823} and \eqref{add19} are employed. Therefore, we require the following estimate, which exhibits lower time decay yet higher frequency decay. When $j_1\geq j$ and $j_1-2\leq j_2\leq j_1+2$,
\begin{align}\label{combin1}
&VII_{1,4,1,j_1,j_2}
\nonumber\\ \leq &C\int\Big|P_j\Big(\partial^{\leq2}\nabla u_{j_1}\partial^{\leq1}\nabla(Zu)_{j_2}\Big)(\partial v)\Big|dx+C\int\Big|P_j\Big(\partial^{\leq1}\nabla u_{j_1}\partial^{\leq2}\nabla(Zu)_{j_2}\Big)(\partial v)\Big|dx
\nonumber\\ \leq &\frac{C}{t+1}\int\Big|P_j\Big(x\partial^{\leq3} u_{j_1}\partial^{\leq1}\nabla(Zu)_{j_2}\Big)(\partial v)\Big|dx+\frac{C}{t+1}\int\Big|P_j\Big(x\partial^{\leq2} u_{j_1}\partial^{\leq2}\nabla(Zu)_{j_2}\Big)(\partial v)\Big|dx
\nonumber\\&+\frac{C}{t+1}\int\Big|P_j\Big(Z\partial^{\leq2} u_{j_1}\partial^{\leq1}\nabla(Zu)_{j_2}\Big)(\partial v)\Big|dx+\frac{C}{t+1}\int\Big|P_j\Big(Z\partial^{\leq1} u_{j_1}\partial^{\leq2}\nabla(Zu)_{j_2}\Big)(\partial v)\Big|dx
\nonumber\\ \leq &\frac{C}{t+1}\left(\|\partial^{\leq3} u_{j_1}\partial^{\leq1}\nabla(Zu)_{j_2}\|_{L^1}+\|\partial^{\leq2} u_{j_1}\partial^{\leq2}\nabla(Zu)_{j_2}\|_{L^1}\right)\|x\partial v\|_{L^{\infty}}
\nonumber\\&+\frac{C}{t+1}\left(\|Z\partial^{\leq2} u_{j_1}\partial^{\leq1}\nabla(Zu)_{j_2}\|_{L^1}+\|Z\partial^{\leq1} u_{j_1}\partial^{\leq2}\nabla(Zu)_{j_2}\|_{L^1}\right)\|\partial v\|_{\infty}
\nonumber\\ \leq &\frac{C\varepsilon^3}{\delta}(1+t)^{-1+\delta^2} (2^{j_1})^{-1-12\delta}(2^j)^{-12\delta}.
\end{align}
Since $0<\varepsilon\leq\delta^6$, the combination of inequalities \eqref{combin1}, \eqref{add0823}, \eqref{add19} and \eqref{combin2} shows that
\begin{align}\label{cc1}
&\sum_{j_1=j}^{+\infty}\sum_{j_2=j_1-2}^{j_1+2}VII_{1,4,1,j_1,j_2}
\nonumber\\ \leq &C\sum_{j_1=j}^{+\infty}\sum_{j_2=j_1-2}^{j_1+2}\left(\frac{C\varepsilon^2}{\delta^3}(1+t)^{-1+\delta^2}(2^{j_1})^{\frac12-12\delta}\right)^{\frac12}\left(\int \frac1{1+(t-r)^2}e^{\arctan(r-t)}|(\partial_t+\partial_r)v|^2dx\right)^{\frac14}
\nonumber\\&\qquad\qquad\qquad\times\Big(\frac{\varepsilon^3}{\delta}(1+t)^{-1+\delta^2} (2^{j_1})^{-1-12\delta}(2^j)^{-12\delta}\Big)^{\frac12}
\nonumber\\ \leq &\frac{C}{\delta^{2}}\varepsilon^{\frac52}(1+t)^{-1+\delta^2}(2^j)^{-\frac14}\left(\int \frac1{1+(t-r)^2}e^{\arctan(r-t)}|(\partial_t+\partial_r)v|^2dx\right)^{\frac14}.
\end{align}
We employ Young's inequality: for any positive constants $a,b,\epsilon_0$ and $p,q>0$ with $\frac1p+\frac1q=1$, 
\[
ab\leq \frac{(a/\epsilon_0)^p}p+\frac{(b\epsilon_0)^q}q.
\]
Choosing $p=\frac43,q=4$ and taking $\epsilon_0$ sufficiently small, we obtain
\begin{align*}
VII_{1,4,1}\leq &\frac1{16}\int \frac1{1+(t-r)^2}e^{\arctan(r-t)}|(\partial_t+\partial_r)v|^2dx
+\frac{C\varepsilon^4}{\delta^6}(1+t)^{-2+2\delta^2}(2^j)^2
\\&+\frac{C}{\delta^{3}}\varepsilon^{\frac{10}3}(1+t)^{-\frac43+\frac43\delta^2}(2^j)^{-\frac13}.
\end{align*}

Using equality \eqref{lemm2_add}, for any scalar function $f$, we get
\begin{align*}
(t-r)\left((1+t+r)\partial_kf-L_kf\right)=(t-r)\partial_kf-x^kSf+x^j\widetilde{\Omega}_{kj}f+rL_kf,
\end{align*}
which implies
\begin{align}\label{add10}
\left|\partial_kf-\frac{L_kf}{1+t+r}\right|&\leq \frac{C}{(1+t+r)\langle t-r\rangle}\left(|Lf|+(1+t+r+|t-r|)|\partial_kf|+|xSf|+|xZf|\right).
\end{align}
Using inequality \eqref{add10}, the estimate \eqref{kg_pro1} and $\|[S,P_j]f\|_{L^2}\leq C\|f\|_{L^2}$ gives
\begin{align*}
&VII_{1,4,3}\leq VII_{1,4,3,3}+\sum_{j_1\backsim j_2\backsim j}\left|VII_{1,4,3,j_1,j_2}\right|,
\end{align*}
where
\begin{align}
&VII_{1,4,3,3}=\mathcal{O}(1)\sum_{j_1,j_2=0}^{+\infty}\int\frac1{1+t+r}L\Big(P_j\Big(\partial_tw_3\partial_tw_4-\nabla w_3\cdot\nabla w_4+w_3w_4\Big)\Big)e^{\arctan(r-t)}(\partial_t+\partial_r)vdx,
\nonumber\\&\label{combin3}
\left|VII_{1,4,3,j_1,j_2}\right|\leq \left(VII_{1,4,3,1,j_1,j_2}\right)\left(\int \frac1{\langle t-r\rangle^2}e^{\arctan(r-t)}|(\partial_t+\partial_r)v|^2dx\right)^{\frac12}+VII_{1,4,3,2,j_1,j_2},
\end{align}
with
\begin{align*}
&VII_{1,4,3,1,j_1,j_2}
\\ = &C\Big(\|(\partial^{\leq3} u_{j_1})\partial^{\leq2}(Zu)_{j_2}\|_{L^2}+\|(\partial^{\leq3}(Zu)_{j_2})\partial^{\leq2}u_{j_1}\|_{L^2}\Big)
\\&+\frac{C}{1+t}\Big(\|\langle x\rangle(\partial^{\leq2}Z^{\leq1}u_{j_1})\partial^{\leq2}(Zu)_{j_2}\|_{L^2}+\|\langle x\rangle(\partial^{\leq2}Z^{\leq1} (Zu)_{j_2})\partial^{\leq2}u_{j_1}\|_{L^2}\Big)
\\&+\frac{C}{1+t}\Big(\left\|xP_j\left(S\partial^{\leq1} (Zu)_{j_2}P_{j_1}\partial^{\leq1}(\Box u+u)\right)\right\|_{L^2}+\left\|xP_j\left(S\partial^{\leq1} u_{j_1}P_{j_2}\partial^{\leq1} Z(\Box u+u)\right)\right\|_{L^2}\Big),
\\ &VII_{1,4,3,2,j_1,j_2}=\frac{C}{1+t}\int\left(|\nabla\partial^{\leq2} u_{j_1}\partial^{\leq2} (Zu)_{j_2}|+|\partial^{\leq2} u_{j_1}\nabla\partial^{\leq2} (Zu)_{j_2}|\right)|\partial v|dx.
\end{align*}
By applying integration by parts to $L$, using
\[
\left|L\left(\frac1{1+t+r}\right)\right|\leq \frac{C}{1+t},\ |L(t-r)|\leq C(t+r),\ |L\partial_rf-\partial_rLf|\leq C|\partial Z^{\leq1}f|+C\left|\frac{Lf}r\right|,
\]
we have verified that every term from $VII_{1,4,3,3}$ is well-behaved, apart from the following one
\[
VII_{1,4,3,3,1}=\mathcal{O}(1)\sum_{j_1,j_2=0}^{+\infty}\frac{d}{dt}\int\frac{x}{1+t+r}\Big(P_j\Big(\partial_tw_3\partial_tw_4-\nabla w_3\cdot\nabla w_4+w_3w_4\Big)\Big)e^{\arctan(r-t)}(\partial_t+\partial_r)vdx.
\] 
Using similar discussion of $VII_{1,4,1,1,j_1,j_2}$ and $VII_{1,4,1,2,j_1,j_2}$ yields that terms $VII_{1,4,3,1,j_1,j_2}$ own similar bound of the sum of $VII_{1,4,1,1,j_1,j_2}$ and $VII_{1,4,1,2,j_1,j_2}$. Using identity \eqref{space_div} and applying $L^{\infty}$-norm to $\partial v$ shows that terms $VII_{1,4,3,2,j_1,j_2}$ are fine. Then terms $VII_{1,4,3}$ are acceptable.

The discussion on terms $VI_{4,2}$ applies to  terms $VII_{1,4,2}$ and $VII_{1,4,4}$. All these terms are fine.

Integrating by parts in time, we transfer $\partial_t$ from $\partial_tv$ to $\partial^{\leq1}u_{j_1}\partial^{\leq1}(Z^{\leq1}u)_{j_2}$. Using the identity 
\[
\partial_t=\frac1{1+t}(S-r\partial_r+\partial_t),
\]
to treat the similar terms of $VII_{1,4}$, we can treat terms $VII_3$ analogously to $VII_1$ and omit the details. 

Thus the combination of the discussion above shows that 
\begin{align}\label{low3_per}
\frac{d}{dt}N_{2,j}(u(t))\leq & C_{low3}\left( \frac{\varepsilon^4}{\delta^6}(1+t)^{-2+2\delta^2}(2^j)^4+\frac{\varepsilon^{\frac{10}3}}{\delta^{3}}(1+t)^{-\frac43+\frac43\delta^2}(2^j)^{\frac53}\right)
\nonumber\\&+VII_{1,1,1}+VII_{1,1,3,1}+VII_{1,1,4,2}+VII_{1,4,3,3,1}+other\ similar\ terms.
\end{align}

Then there exists $C_0$ depending only on $\widetilde{C}$ such that
\begin{align}\label{C_0_2}
\sum_{j=0}^{+\infty}(2^{j})^{3+54\delta}N_{2,j}(u(0)) \leq \frac{C_0}{1000}\varepsilon^2.
\end{align}
By using the high-order energy estimates, we get
\[
N_{2,j}(u(t))\leq 5\|\partial^{\leq1} (Zu)_j\|^2_{L^2}\leq 5C_2\varepsilon^2(2^{j})^{-3-54\delta}(1+t)^{\delta^2}.
\]
For any $t\geq 0$, analogously to \eqref{low2}, we get
\begin{align}\label{add11}
\frac d{dt}\left(\left(N_{2,j}(u(t))\right)^{\frac1{\delta}}\right)\leq& 
C_{low3}\frac{1}{\delta^7}(5C_2)^{\frac{1}{\delta}-1}(\varepsilon)^{\frac2{\delta}+2}(2^{j})^{-\frac3{\delta}-47+54\delta}(1+t)^{-2+\delta+\delta^2}
\nonumber\\&+C_{low3}\frac{1}{\delta^4}(5C_2)^{\frac{1}{\delta}-1}(\varepsilon)^{\frac2{\delta}+\frac43}(2^{j})^{-\frac3{\delta}-49}(1+t)^{-\frac43+\delta+\delta^2}
\nonumber\\&+\frac{1}{\delta}\left(N_{2,j}(u(t))\right)^{\frac{1}{\delta}-1}(VII_{1,1,1}+VII_{1,1,3,1}+VII_{1,1,4,2}+VII_{1,4,3,3,1})
\nonumber\\&+other\ analogous\ term.
\end{align}
We come to terms involving $VII_{1,1,1}$ in the above estimate. Let 
\begin{align*}
R_{2,1}=&\mathcal{O}(1)\sum_{j_3,j_4=0}^{+\infty}\int \partial\left(P_j\left(\partial_t(\partial^{\leq1} u_{j_3}\partial^{\leq1} (Zu)_{j_4})\right)\right)e^{\arctan(r-t)}\partial_t(Zu)_jdx
\\&+\mathcal{O}(1)\sum_{j_3,j_4=0}^{+\infty}\int \partial\left(P_j\left(\partial^{\leq1} u_{j_3}\partial^{\leq1} (Zu)_{j_4}\right)\right)e^{\arctan(r-t)}\partial_t^2(Zu)_jdx.
\end{align*}
Thus considering three cases depending on which of $j,j_3,j_4$ is the minimum among $\{j,j_3,j_4\}$ and placing $L^{\infty}$ norms on $j_3$ part shows that
\begin{align*}
\left|R_{2,1}\right|\leq \frac{C\varepsilon^3}{\delta^3}(1+t)^{-1}(2^{j})^{-\frac12},
\end{align*}
which implies
\begin{align}\label{add12}
\left|\frac{1}{\delta}\left(N_{2,j}(u(t))\right)^{\frac{1}{\delta}-1}R_{2,1}\right|\leq \frac{C}{\delta^4}(5C_2)^{\frac{1}{\delta}-1}(\varepsilon)^{\frac2{\delta}+1}(2^{j})^{-\frac3{\delta}-51.5+54\delta}.
\end{align}
Using \eqref{low4} gives
\begin{align}\label{add13}
\left|\frac{d}{dt}\left(N_{2,j}(u(t))\right)\right|&\leq C\int \left(\left|P_j\left(\partial^{\leq2}Z^{\leq1} u\partial^{\leq1} u\right)\right|+\left|P_j\left(\partial^{\leq2} u\partial^{\leq1} Z^{\leq1}u\right)\right|\right)|\partial_t(Zu)_j|dx
\nonumber\\&\leq \frac{C\varepsilon^3}{\delta^3}(1+t)^{-1}(2^j)^{-1.5}.
\end{align} 
Applying \eqref{add12} and \eqref{add13}, for $\delta$ in $(0,0.001]$, we obtain
\begin{align}\label{add14}
&\quad\left|\int_0^T\frac{1}{\delta}\left(N_{2,j}(u(t))\right)^{\frac{1}{\delta}-1}(VII_{1,1,1})dt\right|
\nonumber\\&\leq \left|\int_0^T\frac{d}{dt}\left(\frac{1}{\delta}\left(N_{2,j}(u(t))\right)^{\frac{1}{\delta}-1}R_{2,1}\right)dt
-\int_0^T\left(\frac{1-\delta}{\delta^2}\left(N_{2,j}(u(t))\right)^{\frac{1}{\delta}-2}\right)\frac{d}{dt}\left(N_{2,j}(u(t))\right)R_{2,1}dt\right|
\nonumber\\&\leq \frac{C}{\delta^4}(5C_2)^{\frac{1}{\delta}-1}(\varepsilon)^{\frac2{\delta}+1}(2^{j})^{-\frac3{\delta}-51.5+54\delta}
+\frac{C}{\delta^8}(5C_2)^{\frac{1}{\delta}-2}(\varepsilon)^{\frac2{\delta}+2}(2^{j})^{-\frac3{\delta}-50+128\delta}.
\end{align}
Terms involving $VII_{1,1,3,1},VII_{1,1,4,2},VII_{1,4,3,3,1}$ can be treated similarly and they own the same bound of terms involving $VII_{1,1,1}$. 

Similarly to the end of Section \ref{sec_low1}, if $\delta$ in $(0,0.001]$ is a fixed constant and $\varepsilon$ in $(0,\delta^6]$ is small enough, for any  $t>0$,  integrating both sides of \eqref{add11} over the interval $[0,t]$ and using \eqref{C_0_2} gives
\[
M_{low3}(u(t))\leq 5\sum_{j=0}^{+\infty}(2^j)^{3+46\delta}N_{2,j}(u(t))\leq \frac{C_0}{32}\varepsilon^2.
\]

\subsubsection{\textbf{Estimates for $M_{low2}(u(t))$}}\label{sec_low3}
Let 
\[
N_{3,j,k}(u(t))=\int e^{\arctan(r-t)}\left(|\partial^{\leq1} u_{j,k}|^2\right)dx.
\]
Similarly to inequality \eqref{low1}, applying Lemma \ref{product_KG} gives
\begin{align}\label{low3}
&\frac d{dt}N_{3,j,k}(u(t))+\int e^{\arctan(r-t)}\frac1{1+(t-r)^2}\left((u_{j,k})^2+\left|(\partial_t+\partial_r)u_{j,k}\right|^2\right)dx
\nonumber\\ \leq &VIII_1+VIII_2+VIII_3+VIII_4+good\ terms,
\end{align}
where
\begin{align*}
&VIII_{1}=\mathcal{O}(1)\int \Big(R_kP_j\partial\Big(\Box(\partial^{\leq1} u\partial  u)+\partial^{\leq1} u\partial  u\Big)\Big)e^{\arctan(r-t)}\partial_tu_{j,k}dx,
\\&VIII_{2}=\mathcal{O}(1)\int\Big(R_kP_j\partial\big(\partial^{\leq1} u(\Box \partial  u+\partial  u)\big)\Big)e^{\arctan(r-t)}\partial_tu_{j,k}dx,
\\&VIII_{3}=\mathcal{O}(1)\int\Big(R_kP_j\partial\big(\partial  u(\Box \partial^{\leq1} u+\partial^{\leq1} u)\big)\Big)e^{\arctan(r-t)}\partial_tu_{j,k}dx,
\\&VIII_{4}=\mathcal{O}(1)\int\Big(R_kP_j\Big(\partial\big(\partial_t\partial^{\leq1} u\partial_t\partial  u-\nabla \partial^{\leq1} u\cdot\nabla \partial  u+\partial^{\leq1} u\partial  u\big)\Big)\Big)e^{\arctan(r-t)}\partial_tu_{j,k}dx.
\end{align*}
Here, by "good terms" we mean terms involving semi-linear terms $H(u)$, which enjoy better properties. Hence we omit the details. Using the estimate $\|R_kP_jf\|_{L^2}\leq C\|P_jf\|_{L^2}$, we reduce the discussion of terms $VIII_i$ to that of terms $VI_i$, $i=1,2,3,4$, respectively. Here for terms $VIII_1$, we will employ $[\Box,R_k]=[\Box,P_j]=0$. Thus we get
\begin{align}\label{low2_per}
\frac{d}{dt}N_{3,j,k}(u(t))\leq C_{low2}\frac{\varepsilon^4}{\delta^6}(1+t)^{-2}(2^j)^2+VIII_{1,1}+terms\ similar\ to\ VIII_{1,1},
\end{align}
where
\begin{align*}
&VIII_{1,1}=\mathcal{O}(1)\frac d{dt}\int \partial_t\left(R_kP_j\left(\partial(\partial^{\leq1} u\partial  u)\right)\right)e^{\arctan(r-t)}\partial_tu_{j,k}dx.
\end{align*}

\textbf{Step1: let's estimate $\sum_{j=0}^{+\infty}\sum_{k=j+20}^{+\infty}\Big((2^j)^{3+42\delta}(2^{k})^{2+2\delta}\Big)\|\partial^{\leq1}u_{j,k}\|_{L^{2}}^2$}. Since
\[
M_{high2}(u(t))+M_{high4}(u(t))\leq C_2\varepsilon^2(1+t)^{\delta^2},
\]
then 
\[
\sum_{j=0}^{+\infty}\sum_{k=j+20}^{+\infty}\left((2^j)^{3+50\delta}(2^k)^{2+2\delta}+(2^j)^{1+40\delta}(2^k)^{4+2\delta}\right)\|\partial^{\leq1}u_{j,k}\|_{L^{2}}^2\leq C\varepsilon^2(1+t)^{\delta^2}.
\]
A straight calculation shows that
\begin{align*}
(2^k)^{2.1+2\delta}(2^j)^{2.9+49.5\delta}=&\left((2^j)^{3+50\delta}(2^k)^{2+2\delta}\right)^{0.95}\left((2^j)^{1+40\delta}(2^k)^{4+2\delta}\right)^{0.05}
\\ \leq &\frac{19}{20}\left((2^j)^{3+50\delta}(2^k)^{2+2\delta}\right)+\frac1{20}\left((2^j)^{1+40\delta}(2^k)^{4+2\delta}\right),
\end{align*}
which implies
\begin{align}\label{cc2}
\sum_{j=0}^{+\infty}\sum_{k=j+20}^{+\infty}(2^k)^{2.1+2\delta}(2^j)^{2.9+49.5\delta}\|\partial^{\leq 1}u_{j,k}\|_{L^{2}(\mathbb R^3)}^2\leq C\varepsilon^2(1+t)^{\delta^2}.
\end{align}

Then there exists $C_0$ depending only on $\widetilde{C}$ such that
\begin{align}\label{C_0_3}
\sum_{j=0}^{+\infty}\sum_{k=j+20}^{+\infty}(2^k)^{2.1+2\delta}(2^j)^{2.9+49.5\delta}N_{3,j,k}(u(0)) \leq \frac{C_0}{1000}\varepsilon^2.
\end{align}
Using \eqref{cc2} yields
\[
N_{3,j,k}(u(t))\leq 5\|\partial^{\leq1} u_{j,k}\|^2_{L^2}\leq 5C\varepsilon^2(2^k)^{-2.1-2\delta}(2^j)^{-2.9-49.5\delta}(1+t)^{\delta^2}.
\]
Similarly to \eqref{low2}, for $t\geq 0$, we have
\begin{align}\label{cc3}
\frac d{dt}\left(\left(N_{3,j,k}(u(t))\right)^{\frac1{\delta}}\right)\leq& 
\frac{C_{low2}}{\delta^7}(5C_2)^{\frac1{\delta}-1}\varepsilon^{\frac2{\delta}+2}(1+t)^{-2+\delta-\delta^2}(2^k)^{-\frac{2.1}{\delta}+0.1+2\delta}(2^j)^{-\frac{2.9}{\delta}-44.6+50\delta}
\nonumber\\&+\frac{1}{\delta}\left(N_{3,j,k}(u(t))\right)^{\frac{1}{\delta}-1}(VIII_{1,1}+terms\ similar\ to\ VIII_{1,1}).
\end{align}

Consequently, if $\delta$ in $(0,0.001]$ is a fixed constant and $\varepsilon$ in $(0,\delta^6]$ is small enough, for any positive time $t$, integrating both sides of \eqref{cc3} over the interval $[0,t]$ and using \eqref{C_0_3} gives
\[
\sum_{j=0}^{+\infty}\sum_{k=j+20}^{+\infty}\Big((2^j)^{3+42\delta}(2^{k})^{2+2\delta}\|\partial^{\leq1}u_{j,k}\|_{L^2}^2\Big)\leq 5\sum_{j=0}^{+\infty}\sum_{k=j+20}^{+\infty}\Big((2^j)^{3+42\delta}(2^{k})^{2+2\delta}N_{3,j,k}(u(t))\Big)\leq \frac{C_0}{32}\varepsilon^2.
\]

\textbf{Step2: let's estimate $\sum_{j=0}^{+\infty}\sum_{k=j+20}^{+\infty}\Big((2^j)^{5+42\delta}(2^{k})^{10\delta}\|\partial^{\leq1}u_{j,k}\|_{L^{2}}^2\Big)$}. Since
\[
M_{high1}(u(t))+M_{high2}(u(t))\leq C_2\varepsilon^2(1+t)^{\delta^2},
\]
then 
\[
\sum_{j=0}^{+\infty}\sum_{k=j+20}^{+\infty}\left((2^j)^{3+50\delta}(2^k)^{2+2\delta}+(2^j)^{5.01+64\delta}(2^k)^{-0.01}\right)\|\partial^{\leq1}u_{j,k}\|_{L^{2}}^2\leq 130C_2\varepsilon^2(1+t)^{\delta^2}.
\]
Similarly to \textbf{Step1}, we get 
\begin{align*}
(2^k)^{0.0905+0.1\delta}(2^j)^{4.9095+63.3\delta}\leq \frac{1}{20}\left((2^j)^{3+50\delta}(2^k)^{2+2\delta}\right)+\frac{19}{20}\left((2^j)^{5.01+64\delta}(2^k)^{-0.01}\right),
\end{align*}
which implies
\begin{align*}
\sum_{j=0}^{+\infty}\sum_{k=j+20}^{+\infty}(2^k)^{0.0905+0.1\delta}(2^j)^{4.9095+63.3\delta}\|\partial^{\leq 1}u_{j,k}\|_{L^{2}(\mathbb R^3)}^2\leq 130C_2\varepsilon^2(1+t)^{\delta^2}.
\end{align*}
Then 
\[
N_{3,j,k}(u(t))\leq 5\|\partial^{\leq1} u_{j,k}\|^2_{L^2}\leq 650C_2\varepsilon^2(2^k)^{-0.0905-0.1\delta}(2^j)^{-4.9095-63.3\delta}(1+t)^{\delta^2},
\]
and there exists $C_0$ depending only on $\widetilde{C}$ such that
\begin{align}\label{C_0_4}
\sum_{j=0}^{+\infty}\sum_{k=j+20}^{+\infty}(2^k)^{0.0905+0.1\delta}(2^j)^{4.9095+63.3\delta}N_{3,j,k}(u(0)) \leq \frac{C_0}{1000}\varepsilon^2.
\end{align}
Similarly to Section \ref{sec_low1}, for any $t>0$, if $\delta$ in $(0,0.001]$ is a fixed constant and $\varepsilon$ in $(0,\delta^6]$ is small enough, we get
\begin{align*}
\sum_{j=0}^{+\infty}\sum_{k=j+20}^{+\infty}\Big((2^j)^{5+42\delta}(2^{k})^{10\delta}\|\partial^{\leq1}u_{j,k}\|_{L^{2}}^2\Big)\leq \frac{C_0}{32}\varepsilon^2.
\end{align*}

Combining the discussion above shows that
\[
M_{low2}(u(t))\leq \frac{C_0}{16}\varepsilon^2.
\]

\subsubsection{\textbf{Estimates for $M_{low4}(u(t))$}}\label{sec_low4}

In Section \ref{sec_low4}, let  $v=(Zu)_{j,k}$ and
\[
N_{4,j,k}(u(t))=\int e^{\arctan(r-t)}\left(|\partial^{\leq1} (Zu)_{j,k}|^2\right)dx.
\]
Similarly to inequality \eqref{low4},
\begin{align}\label{low5}
\frac d{dt}N_{4,j,k}(u(t))+\int e^{\arctan(r-t)}\frac1{1+(t-r)^2}\left(v^2+\left|(\partial_t+\partial_r)v\right|^2\right)dx\leq IX_1+IX_2+IX_3,
\end{align}
where
\begin{align*}
&IX_1=\sum_{j_1,j_2=0}^{+\infty}\int R_kP_j\left( \mathcal{O}\left(\partial_t(\nabla u_{j_1}\nabla (Zu)_{j_2}\right)+\mathcal{O}\left(\nabla(\partial^{\leq1} u_{j_1}\partial^{\leq1} (Zu)_{j_2}\right)\right)e^{\arctan(r-t)}\partial_tvdx,
\\&IX_2=\sum_{j_1,j_2=0}^{+\infty}\int R_kP_j\mathcal{O}\left(\partial(\partial^{\leq1} u_{j_1}\partial u_{j_2})\right)e^{\arctan(r-t)}\partial_tvdx,
\\&IX_3=\sum_{j_1,j_2=0}^{+\infty}\int R_kP_j\mathcal{O}\left(\partial^{\leq1}u_{j_1}\partial^{\leq1}(Z^{\leq1}u)_{j_2}\right)e^{\arctan(r-t)}\partial_tvdx.
\end{align*}
Using the estimate $\|R_kP_jf\|_{L^2}\leq C\|P_jf\|_{L^2}$, we reduce the discussion of terms $IX_i$ to that of terms $VII_i$, $i=1,2,3$, respectively.
Thus we achieve 
\begin{align}\label{low4_per}
\frac{d}{dt}N_{4,j,k}(u(t))\leq &C\left(\frac{\varepsilon^4}{\delta^6}(1+t)^{-2+2\delta^2}(2^j)^4+\frac{\varepsilon^{\frac{10}3}}{\delta^2}(1+t)^{-\frac43+2\delta^2}(2^j)^{2}\right)
\nonumber\\&+IX_{1,1,1}+other\ terms\ similar\ to\ IX_{1,1,1},
\end{align}
where
\begin{align*}
IX_{1,1,1}=\mathcal{O}(1)\sum_{j_1,j_2=0}^{+\infty}\frac d{dt}\int \left(R_kP_j\left(\partial\partial_t(\nabla u_{j_1}\nabla(Zu)_{j_2})\right)\right)e^{\arctan(r-t)}\partial_tvdx.
\end{align*}

Since
\[
M_{high3}(u(t))+M_{high4}(u(t))\leq C_2\varepsilon^2(1+t)^{\delta^2},
\]
then 
\[
\sum_{j=0}^{+\infty}\sum_{k=j+20}^{+\infty}\left((2^j)^{1+40\delta}(2^k)^{2+2\delta}+(2^j)^{3.01+54\delta}(2^k)^{-0.01}\right)\|\partial^{\leq1}(Zu)_{j,k}\|_{L^{2}}^2\leq 130C_2\varepsilon^2(1+t)^{\delta^2}.
\]
A direct calculation yields
\begin{align*}
(2^k)^{0.0905+0.1\delta}(2^j)^{2.9095+ 53.3\delta}\leq \frac{19}{20}\left((2^j)^{3.01+54\delta}(2^{k})^{-0.01}\right)+\frac1{20}\left((2^j)^{1+40\delta}(2^k)^{2+2\delta}\right),
\end{align*}
which implies
\begin{align}\label{cc4}
\sum_{j=0}^{+\infty}\sum_{k=j+20}^{+\infty}(2^k)^{0.0905+0.1\delta}(2^j)^{2.9095+ 53.3\delta}\|\partial^{\leq 1}(Zu)_{j,k}\|_{L^{2}(\mathbb R^3)}^2\leq 130C_2\varepsilon^2(1+t)^{\delta^2}.
\end{align}
Then there exists $C_0$ depending only on $\widetilde{C}$ such that
\begin{align}\label{C_0_5}
\sum_{j=0}^{+\infty}\sum_{k=j+20}^{+\infty}(2^k)^{0.0905+0.1\delta}(2^j)^{2.9095+ 53.3\delta}N_{4,j,k}(u(0)) \leq \frac{C_0}{1000}\varepsilon^2.
\end{align}
Using \eqref{cc4} yields
\[
N_{4,j,k}(u(t))\leq 5\|\partial^{\leq1} (Zu)_{j,k}\|^2_{L^2}\leq 650C_2(2^k)^{-0.0905-0.1\delta}(2^j)^{-2.9095-53.3\delta}(1+t)^{\delta^2}.
\]
Similarly to Section \ref{section1}, for any $t>0$, if $\delta$ in $(0,0.001]$ is a fixed constant and $\varepsilon$ in $(0,\delta^6]$ is small enough, we get
\begin{align*}
M_{low4}(u(t))\leq 5\sum_{j=0}^{+\infty}\sum_{k=j+20}^{+\infty}\Big((2^j)^{3+30\delta}(2^{k})^{12\delta}N_{4,j,k}(u(t))\Big)\leq \frac{C_0}{32}\varepsilon^2.
\end{align*}

Combining the discussion above in Section \ref{sec_low}, we get
\[
M_{low}(u(t))\leq C_0\varepsilon^2.
\]

\subsection{$L^{\infty}$ estimates}\label{sec_infi}
In this section, we give the proof of inequalities \eqref{small1} and \eqref{small2}. For any solution to equation \eqref{NLG1}, since $M_{low}(u(t))\leq C_0\varepsilon^2$, using Lemma \ref{sob1} gives that
\begin{align*}
&(1+t)\|P_0\partial^{\leq2}u\|_{L^{\infty}}\leq (1+t)\sum_{k=0}^{+\infty}\|R_kP_0\partial^{\leq2}u\|_{L^{\infty}}\leq \frac{C\varepsilon}{\delta}+C\|P_0\partial_t\partial^{\leq2}u\|_{\dot{H}^{-1}},
\\&(1+t)\|\partial^{\leq2}u_j\|_{L^{\infty}}\leq \frac{C\varepsilon}{\delta}(2^j)^{-\delta},\ for\ j\geq1.
\end{align*}
To prove inequality \eqref{small1}, by using Lemma \ref{sob_lemma}, we get
\[
(1+t)^2\|\partial^{\leq2}u\|^2_{L^{\infty}}\leq C\|\partial_t\partial^{\leq2}u\|^2_{\dot{H}^{-1}(\mathbb R^3)}+\frac{C\varepsilon^2}{\delta^3}.
\]
Then to complete the proof of inequalities \eqref{small1} and \eqref{small2}, it suffices to estimate 
\[
\|\partial_tu\|_{\dot{H}^{-1}(\mathbb R^3)}+\ \|\partial^2_tu\|_{\dot{H}^{-1}(\mathbb R^3)}+\ \|\partial_t^3u\|_{\dot{H}^{-1}(\mathbb R^3)}\leq C\varepsilon/\delta.
\]
By using $L^{\frac65}\hookrightarrow \dot{H}^{-1}$, $\|fg\|_{L^{\frac65}}\leq \|f\|_{L^2}\|g\|_{L^3}$ and $\dot{H}^{\frac12}\hookrightarrow L^3$ in $\mathbb R^3$, we obtain
\begin{align}\label{inf1}
\|\partial^{\leq1}_t\partial^2_tu\|_{\dot{H}^{-1}}\leq \|\partial^{\leq1}_t(\Box u+u)\|_{\dot{H}^{-1}}+\|\Delta \partial^{\leq1}_tu\|_{\dot{H}^{-1}}+\|\partial^{\leq1}_tu\|_{\dot{H}^{-1}}
\leq C\varepsilon+\|\partial^{\leq1}_tP_0u\|_{\dot{H}^{-1}}.
\end{align}
In Section \ref{sec_infi}, let 
\[
   \widehat{v}=F(|\nabla|^{-1}P_0u)=\frac1{|\xi|}\widehat{P_0u}.
\]
Then we only need to estimate
\[
\|\partial_t^{\leq1}P_0u\|_{\dot{H}^{-1}}=\|\partial_t^{\leq1}v\|_{L^2}\leq C\varepsilon/\delta.
\]
By calculation, the function $v$ satisfies the following equation
\[
\Box v+v=|\nabla|^{-1}P_0(C^{i\alpha\gamma}\partial_{\gamma}u\partial_i\partial_{\alpha}u+C^{i\alpha}u\partial_i\partial_{\alpha}u+H(u)).
\]
We now carry out the ghost energy estimate for the equation above. Using \eqref{ghost1} yields that
\[
\frac d{dt}\int e^{\arctan(r-t)}|\partial^{\leq1}v|dx+\int e^{\arctan(r-t)}\frac1{1+(t-r)^2}\left(v^2+\left|(\partial_t+\partial_r)v\right|^2\right)dx\leq B_1+B_2+B_3,
\]
where
\begin{align*}
    &B_1:=\int_{\mathbb R^3}e^{\arctan(r-t)}\partial_tv|\nabla|^{-1}P_0(C^{i\alpha\gamma}\partial_{\gamma}u\partial_i\partial_{\alpha}u)dx,
   \\&B_2:=\int_{\mathbb R^3}e^{\arctan(r-t)}\partial_tv|\nabla|^{-1}P_0(H(u))dx,
   \\&B_3:=\int_{\mathbb R^3}e^{\arctan(r-t)}\partial_tv|\nabla|^{-1}P_0(C^{i\alpha}u\partial_i\partial_{\alpha}u)dx.
\end{align*}
\subsubsection{\textbf{Estimates for $B_1$}}
We first consider terms $B_1$. By integration by parts with respect to $t$, 
\begin{align*}
B_1\leq B_{1,1}+B_{1,2}+B_{1,3},
\end{align*}
where
\begin{align*}
&B_{1,1}=\frac{d}{dt}\int_{\mathbb R^3}e^{\arctan(r-t)}v|\nabla|^{-1}P_0(C^{i\alpha\gamma}\partial_{\gamma}u\partial_i\partial_{\alpha}u)dx,
\\&B_{1,2}=C\int_{\mathbb R^3}e^{\arctan(r-t)}\frac1{1+|t-r|^2}|v|\left||\nabla|^{-1}P_0(C^{i\alpha\gamma}\partial_{\gamma}u\partial_i\partial_{\alpha}u)\right|dx,
\\&B_{1,3}=-\int_{\mathbb R^3}e^{\arctan(r-t)}v|\nabla|^{-1}P_0\partial_t(C^{i\alpha\gamma}\partial_{\gamma}u\partial_i\partial_{\alpha}u)dx.
\end{align*}
If we integrate $B_{1,1}$ over $[0,t]$, terms $B_{1,1}$ can be easily controlled. Thus we omit the details. 

We now turn to $B_{1,2}$. Then
\[
    B_{1,2}\leq \frac1{32}\int e^{\arctan(r-t)}\frac1{1+(t-r)^2}v^2dx+B_{1,2,1},
\]
where
\[
B_{1,2,1}=C\left\||\nabla|^{-1}(C^{i\alpha\gamma}\partial_{\gamma}u\partial_i\partial_{\alpha}u)\right\|^2_{L^2}
\]
First, we carefully analyze the properties of the nonlinear term $\partial_{\gamma}u\partial_i\partial_{\alpha}u$ as follows.
\begin{itemize}
\item \textbf{Case 1}: If $\alpha=\gamma=0$, the identity $2\partial_{t}u\partial_i\partial_{t}u=\partial_i\left((\partial_t u)^2\right)$ holds.
\item \textbf{Case 2}: For the case $\alpha=j,\gamma=k$,  the identity $\partial_ku\partial_i\partial_ju=\mathcal{O}(1)\nabla(\nabla u\nabla u)$ holds.
\item \textbf{Case 3}: If $\alpha=0,\gamma=j$, using the identity $(1+t)\partial_j=\partial_j+L_j-x^j\partial_t$ gives
\[
\partial_ju\partial_t\partial_i u=\frac1{1+t}\left(\mathcal{O}(Zu\partial\nabla u)+\mathcal{O}\left(\nabla(x(\partial_tu)^2)\right)+\mathcal{O}(1)(\partial_tu)^2\right).
\]
\item \textbf{Case 4}: If $\alpha=j,\gamma=0$, then $\partial_tu\partial_i\partial_ju=\partial_i(\partial_tu\partial_ju)-\partial_ju\partial_t\partial_iu$, which is reduced to \textbf{Case 3}.
\end{itemize}
Thus we get
\begin{align}\label{inf2}
C^{i\alpha\gamma}\partial_{\gamma}u\partial_i\partial_{\alpha}u=\mathcal{O}(1)\nabla(\partial u\partial u)+\frac1{1+t}\left(\mathcal{O}(Zu\partial\nabla u)+\mathcal{O}\left(\nabla(x(\partial_tu)^2)\right)+\mathcal{O}(1)(\partial_tu)^2\right)
\end{align}
Using a similar fashion of discussion on \eqref{inf1} gives
\begin{align*}
B_{1,2,1}&\leq C\|\partial u\partial u\|_{L^2}^2+C(1+t)^{-2}\left(\||\nabla|^{-1}(Zu\nabla\partial u)\|^2_{L^2}+\||x|(\partial_{t}u)^2\|^2_{L^2}+\||\nabla|^{-1}(\partial_tu)^2\|^2_{L^2}\right)
\\&\leq C\varepsilon^2(1+t)^{-2}\left(\frac{\varepsilon^2}{\delta^3}+\|\partial_t^{\leq1}v\|^2_{L^2}\right)+\frac{C\varepsilon^4}{\delta^4}(1+t)^{-2}.
\end{align*}
Thus terms $B_{1,2,1}$ are acceptable.  

We next move on to $B_{1,3}$. Using \eqref{inf2} shows that
\[
B_{1,3}\leq B_{1,3,1}+B_{1,3,2}+B_{1,3,3},
\]
where
\begin{align*}
&B_{1,3,1}=\mathcal{O}(1)\int_{\mathbb R^3}e^{\arctan(r-t)}v|\nabla|^{-1}\nabla P_0\partial_t(\partial u\partial u)dx,
\\&B_{1,3,2}=\mathcal{O}(1)\frac1{1+t}\int_{\mathbb R^3}e^{\arctan(r-t)}v\left(|\nabla|^{-1}\nabla P_0\left(x\partial_t((\partial_t u)^2)\right)\right)dx,
\\&B_{1,3,3}=C(1+t)^{-1}\|v\|_{L^2}\left(\|\partial_tP_0(Zu\partial\nabla u)\|_{L^{\frac65}}+\|P_0(\partial u\partial^2u)\|_{L^{\frac65}}\right).
\end{align*}
Using the estimate $\|f\|_{L^3}\leq (\|f\|_{L^2})^{2/3}(\|f\|_{L^{\infty}})^{1/3}$ yields
\begin{align*}
\|\partial_tP_0(Zu\partial\nabla u)\|_{L^{\frac65}}\leq &C\sum_{j_1=0}^{+\infty}\sum_{j_2=j_1-2}^{j_1+2}\left(\|\partial_t(Zu)_{j_1}\|_{L^2}\|\partial\nabla u_{j_2}\|_{L^3}+\|(Zu)_{j_1}\|_{L^2}\|\partial_t\partial\nabla u_{j_2}\|_{L^3}\right)
\\ \leq& C\varepsilon^{\frac53}(1+t)^{-\frac13}\left(\frac{C\varepsilon}{\delta}+\|\partial_t^{\leq1}v\|_{L^2}\right)^{\frac13}.
\end{align*}
Similar discussion holds for $\|P_0(\partial u\partial^2u)\|_{L^{\frac65}}$. Thus terms $B_{1,3,3}$ are fine. For terms $B_{1,3,1}$, by using Lemma \ref{product_KG} with $w_1=\partial u_{j_1},w_2=\partial u_{j_2}$, we only need to estimate
\begin{align*}
B_{1,3,1}\leq C\sum_{j_1=0}^{+\infty}\sum_{j_2=j_1-2}^{j_1+2}B_{1,3,1,1,j_1,j_2}+good\ terms,
\end{align*}
where by "good terms", we mean terms that are similar to $B_{1,1},B_{1,2}$ or the integral of fourth-order nonlinear term, and 
\[
B_{1,3,1,1,j_1,j_2}=\left|\int e^{\arctan(r-t)}v\partial_t\left(|\nabla|^{-1}\nabla P_0\left(Q_{j_1,j_2}\right)\right)dx\right|,
\]
with 
\[
Q_{j_1,j_2}=\partial_t\partial u_{j_1}\partial_t\partial u_{j_2}
-\nabla\partial u_{j_1}\cdot\nabla\partial u_{j_2}+\partial u_{j_1}\partial u_{j_2}. 
\]
Using the identity
\[
(1+t+r)(1+|t-r|)|\partial_tf|\leq (1+t+r)|\partial_tf|+|t-r||\partial_tf|+|(t^2-r^2)\partial_tf|
\]
and identity \eqref{cc5} yields 
\begin{align}\label{combin_qq2}
&B_{1,3,1,1,j_1,j_2}
\nonumber\\ \leq &\frac1{32}\int e^{\arctan(r-t)}\frac1{1+(t-r)^2}v^2dx+
\frac{C}{1+t}\|v\|_{L^2}\left\|\partial_t(\partial^{\leq1}\partial u_{j_1}\partial^{\leq1}\partial u_{j_2})\right\|_{L^2}
\nonumber\\&+C\left\|\partial_t(\partial^{\leq1}\partial u_{j_1}\partial^{\leq1}\partial u_{j_2})\right\|_{L^2}^2+C\left\|S|\nabla|^{-1}\nabla P_0(Q_{j_1,j_2})\right\|_{L^2}^2+C\left\|L|\nabla|^{-1}\nabla P_0(Q_{j_1,j_2})\right\|_{L^2}^2.
\end{align}
Carrying out Fourier transform, we get
\begin{align}\label{inf3}
\|S(|\nabla|^{-1}\nabla f)-|\nabla|^{-1}\nabla Sf\|_{L^2}\leq C\|f\|_{L^2},\qquad \|L(|\nabla|^{-1}\nabla f)-|\nabla|^{-1}\nabla Lf\|_{L^2}\leq C\|\partial_tf\|_{\dot{H}^{-1}}.
\end{align}
Therefore, the last two terms on the right side of \eqref{combin_qq2} can be estimated as follows
\begin{align*}
&\left\|S|\nabla|^{-1}\nabla P_0(Q_{j_1,j_2})\right\|_{L^2}^2\leq C\|S(Q_{j_1,j_2})\|_{L^2}^2+C\|Q_{j_1,j_2}\|_{L^2}^2,
\\&\left\|L|\nabla|^{-1}\nabla P_0(Q_{j_1,j_2})\right\|_{L^2}^2\leq C\|L(Q_{j_1,j_2})\|_{L^2}^2+C\|\partial_tQ_{j_1,j_2}\|_{\dot{H}^{-1}}^2
\end{align*}
By using \eqref{kg_pro1}, terms $\|S(Q_{j_1,j_2})\|_{L^2}^2$ are acceptable. Employing
\[
(1+t)\partial_tf=\partial_tf+Sf-\partial_i(x^if)+3f,
\]
and using \eqref{kg_pro1} yields that terms $\|\partial_tQ_{j_1,j_2}\|_{\dot{H}^{-1}}^2$ are acceptable. Thus terms $B_{1,3,1}$ are fine. Terms $B_{1,3,2}$ are similar to $B_{1,3,1}$.  Thus terms $B_{1,3}$ are admissible and then terms $B_1$ are acceptable.

\subsubsection{\textbf{Estimates for $B_2$}}
We now consider terms $B_2$. We first observe that $u\partial_{\alpha}u=\frac12\partial_{\alpha}(u^2)$ in $H(u)$. For the case $\alpha\neq0$, these terms enjoy better properties than $B_1$. So it suffices to focus on the case $\alpha=0$. By integration by parts with respect to $t$, we only need to estimate the following terms: 
\begin{align*}
&B_{2,1}=\mathcal{O}(1)\frac{d}{dt}\int_{\mathbb R^3}e^{\arctan(r-t)}v|\nabla|^{-1}P_0\partial_t(u^2)dx,
\\&B_{2,2}=C\int_{\mathbb R^3}\frac1{1+|t-r|^2}|v|\left||\nabla|^{-1}P_0(u\partial_tu)\right|dx,
\\&B_{2,3}=-\mathcal{O}(1)\int_{\mathbb R^3}e^{\arctan(r-t)}v|\nabla|^{-1}P_0\partial_t^2(u^2)dx.
\end{align*}
Terms $B_{2,1}$ can be readily handled and we omit the details. We now move on to $B_{2,2}$. Applying Lemma \ref{Hardy2} with $s=\frac12-\varepsilon_0$, where $\varepsilon_0$ is a small positive constant to be chosen later, we obtain
\begin{align*}
\int \frac{1}{1+(t-r)^2}||\nabla|^{-1}P_0(u\partial_tu)|^2dx\leq C\|u\partial_tu\|^2_{\dot{H}^{-\frac12-\varepsilon_0}(\mathbb R^3)}.
\end{align*}
Since $\dot{H}^{s}\hookrightarrow L^q$ with $\frac 1q=\frac12-\frac sn$ in $\mathbb R^3$, then $L^p\hookrightarrow \dot{H}^{-s}$, with $\frac1p-\frac sn=\frac12$. Let $s=0.5+\varepsilon_0$. Thus $p=3/(2+\varepsilon_0)$ and
\begin{align*}
&\quad\int \frac{1}{1+(t-r)^2}\left||\nabla|^{-1}P_0(u\partial_tu)\right|^2dx
\\&\leq C\left(\int |u\partial_tu|^{\frac3{2+\varepsilon_0}}dx\right)^{\frac{4+2\varepsilon_0}3}
\\&\leq C\left(\|u\|_{L^{\infty}}^{\frac{1-\varepsilon_0}{2+\varepsilon_0}}\|\partial_tu\|_{L^{\infty}}^{\frac{1-\varepsilon_0}{2+\varepsilon_0}}\int |u\partial_tu|dx\right)^{\frac{4+2\varepsilon_0}3}
\\&\leq C(1+t)^{-\frac{4(1-\varepsilon_0)}{3}}\left((1+t)^2\|u\|_{L^{\infty}}\|\partial_tu\|_{L^{\infty}}\right)^{\frac{2(1-\varepsilon_0)}{3}}\left(\|u\|_{L^2}\|\partial_tu\|_{L^2}\right)^{\frac{4+2\varepsilon_0}3}.
\end{align*}
Now we fix $\varepsilon_0=0.1$, so that the terms $B_{2,2}$ are admissible. It remains to estimate $B_{2,3}$. Applying Lemma \ref{product_KG}, 
\[
-3u^2=(\Box+1)(u^2)-2u(\Box u+u)-2((\partial_tu)^2-|\nabla u|^2+u^2).
\]
For the part $(\Box+1)(u^2)$, integration by parts shows that it can be treated similarly to $B_{2,1},B_{2,2}$. For the part $u(\Box u+u)$, by H\"older inequality and  Sobolev embedding theorem $L^{\frac65}\hookrightarrow \dot{H}^{-1}$, we have
\begin{align*}
\left|\int |\nabla|^{-1}P_0\partial_t^2(u(\Box u+u))e^{\arctan(r-t)}vdx\right| \leq C\|P_0\partial_t^2(u\partial^{\leq1} u\partial^{\leq2} u)\|_{L^{\frac65}}\|v\|_{L^2},
\end{align*}
for which we can obtain $(1+t)^{-\frac43}$ decay. Hence all these terms are acceptable. For the part $(\partial_tu)^2-|\nabla u|^2+u^2$, integrating by parts in time $t$ reduces the problem to estimating the following terms
\begin{align*}
\mathcal{O}(1)\int \nabla^{-1}P_0\partial_t\left((\partial_tu)^2-|\nabla u|^2+u^2\right)e^{\arctan(r-t)}\partial_tvdx.
\end{align*}
Since $\partial_t=\frac{1}{1+t}(\partial_t+S-x^i\partial_i)$, the part $\frac{1}{1+t}x^i\partial_i$ can be treated similarly to $B_{1,3,2}$ via integration by parts in time. It remains to estimate
\begin{align*}
B_{2,3,1}:=&\frac{C}{1+t}\left|\int |\nabla|^{-1}P_0S\left((\partial_tu)^2-|\nabla u|^2+u^2\right)e^{\arctan(r-t)}\partial_tvdx\right|
\\ \leq &\frac{C}{1+t}\left\|S\left((\partial_tu)^2-|\nabla u|^2+u^2\right)\right\|_{L^{\frac65}}\|\partial_tv\|_{L^2}.
\end{align*}
By using inequality \eqref{kg_pro1} (Let $w_1=w_2=u$.) and $|Sf|\leq (t+r)|\partial f|$, they are acceptable. Thus the part $u\partial_{\alpha}u$ in $H(u)$ is fine.

For terms $\partial_{\alpha}u\partial_{\beta}u$ in $H(u)$, by making a little modification of the proof of lemma \ref{product_KG}, we have
\begin{align*}
&\quad\partial_tP_0(\partial_{\alpha}u\partial_{\beta}u)
\\&=\sum_{j_1=0}^{+\infty}\sum_{j_2=j_1-2}^{j_1+2}\mathcal{O}(1)\partial_tP_0(\partial_t\partial_{\alpha}u_{j_1}\partial_t\partial_{\beta}u_{j_2}-\nabla\partial_{\alpha}u_{j_1}\cdot\nabla\partial_{\beta}u_{j_2}
-\partial_{\alpha}u_{j_1}\partial_{\beta}u_{j_2})+good\ terms
\\&=\frac1{1+t}\sum_{j_1=0}^{+\infty}\sum_{j_2=j_1-2}^{j_1+2}P_0\left(\mathcal{O}(1)(S-x^i\partial_i)(\partial_t\partial_{\alpha}u_{j_1}\partial_t\partial_{\beta}u_{j_2}
-\nabla\partial_{\alpha}u_{j_1}\cdot\nabla\partial_{\beta}u_{j_2}-\partial_{\alpha}u_{j_1}\partial_{\beta}u_{j_2})\right)
\\&\quad+good\ terms.
\end{align*}
For the part involving $S$, we have
\[
\frac1{1+t}P_0\left(S(\partial_t\partial_{\alpha}u_{j_1}\partial_t\partial_{\beta}u_{j_2}
-\nabla\partial_{\alpha}u_{j_1}\cdot\nabla\partial_{\beta}u_{j_2}+\partial_{\alpha}u_{j_1}\partial_{\beta}u_{j_2})\right)-\frac2{1+t}P_0\left(S(\partial_{\alpha}u_{j_1}\partial_{\beta}u_{j_2})\right),
\]
where the second term can be dealt with by using lemma \ref{product_KG} again. Now we focus on
\begin{align*}
\frac{x^i}{1+t}\left(\partial_i(\partial_t\partial_{\alpha}u_{j_1}\partial_t\partial_{\beta}u_{j_2}
-\nabla\partial_{\alpha}u_{j_1}\cdot\nabla\partial_{\beta}u_{j_2}-\partial_{\alpha}u_{j_1}\partial_{\beta}u_{j_2})\right).
\end{align*}
A straight calculation shows that 
\begin{align*}
&\quad\partial_t\partial_{\alpha}u_{j_1}\partial_t\partial_{\beta}u_{j_2}
-\nabla\partial_{\alpha}u_{j_1}\cdot\nabla\partial_{\beta}u_{j_2}-\partial_{\alpha}u_{j_1}\partial_{\beta}u_{j_2}
\\&=\partial_{\beta}(\partial_t\partial_{\alpha}u_{j_1}\partial_tu_{j_2})-\partial_t(\partial_{\alpha}\partial_{\beta}u_{j_1}\partial_tu_{j_2})
-\partial_{\beta}(\partial_j\partial_{\alpha}u_{j_1}\partial_ju_{j_2})+\partial_j(\partial_{\alpha}\partial_{\beta}u_{j_1}\partial_ju_{j_2})-\partial_{\beta}(\partial_{\alpha}u_{j_1}u_{j_2})
\\&\quad+\partial_{\alpha}\partial_{\beta}u_{j_1}(\Box u_{j_2}+u_{j_2}),
\end{align*}
where the divergence part can be dealt with by using a similar fashion of discussion for terms $B_{1,3,2}$ and the remainder term forms cubic nonlinear terms. Therefore the part $\partial_{\alpha}u\partial_{\beta}u$ in $H(u)$ is fine. 

For the term $u^2$ in $H(u)$, by using a similar fashion of lemma \ref{product_KG}, 
\[
\partial_t(u^2)=good \ terms +\partial_t((\partial_tu)^2-|\nabla u|^2)=good \ terms +\mathcal{O}(1)\partial_t(\partial_{\alpha}u\partial_{\beta}u),
\]
it's reduced to the above case. Therefore terms $B_2$ are fine.

\subsubsection{\textbf{Estimates for $B_3$}}
Applying the identity $u\partial_{\alpha}\partial_{i}u=\partial_{i}(u\partial_{\alpha}u)-\partial_{i}u\partial_{\alpha}u$ to the $B_3$ terms, in conjunction with the foregoing discussion, shows that terms $B_3$ are well-behaved.

The proofs of \eqref{small1} and \eqref{small2} are now complete, which finishes the proof of Theorem \ref{main_theorem}.

\section*{\textbf{Acknowledgement}}
Wei Xu is partially supported by the Doctoral Initiation Fund of Nanchang Hangkong University (No. EA202207232) and NSFC (No. 12661042 and 12461043). Yi Zhou is partially supported by NSFC (No. 12571231 and 12171097).

\textbf {Statements and Declarations}
\begin{itemize}
\item \textbf{Competing Interests:} The authors state that there is no conflict of interest.
    
\item \textbf{Data availability statements:} No data was used for the research described in this article.
\end{itemize}


\begin{appendix}
\section{\textbf{Auxiliary results}}\label{appendix}
\subsection{Standard Littlewood-Paley operator on $\mathbb R^3$}
Applying the Fourier transform, we achieve the commutation property between the operators $P_k$ and the vector fields.
\begin{lemma}\label{LP_commu}
For any function $f(t,x)$, we get
\[
[\partial,P_k]=0, \ [\Omega,P_k]=0,\ [L_j,P_k]f=\widetilde{P}_{j,k}\partial_tf,
\]
where $\widehat{\widetilde{P}_{j,k}f}=i\partial_{\xi_j}\left(\chi_k(\xi)\right)\widehat{f}$.
\end{lemma}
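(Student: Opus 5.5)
The statement is Lemma \ref{LP_commu}: the commutation relations between the Littlewood--Paley projections $P_k$ and the vector fields. I would prove all three relations on the Fourier side in the spatial variable $x$, treating $t$ as a parameter. Throughout, the operator identities are read as acting on Schwartz functions in $x$, or on the tempered distributions where the data live. Recall that $\widehat{P_kf}(\xi)=\chi_k(\xi)\widehat f(\xi)$.

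\textbf{First identity, $[\partial,P_k]=0$.} The time derivative $\partial_t$ commutes with $P_k$ because $P_k$ acts only in $x$ and its symbol $\chi_k$ does not depend on $t$. The spatial derivatives $\partial_j$ commute with $P_k$ because both are Fourier multipliers, with symbols $i\xi_j$ and $\chi_k(\xi)$, and multiplication operators commute.

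\textbf{Second identity, $[\Omega,P_k]=0$.} Write $\widetilde\Omega_{ij}=x^i\partial_j-x^j\partial_i$. Under the Fourier transform $F[v](\xi)=\int e^{-i\langle x,\xi\rangle}v\,dx$, multiplication by $x^i$ becomes $i\partial_{\xi_i}$ and $\partial_j$ becomes multiplication by $i\xi_j$. Hence
\[
\widehat{\widetilde\Omega_{ij}f}=-(\xi_j\partial_{\xi_i}-\xi_i\partial_{\xi_j})\widehat f,
\]
which is again a rotation vector field, now in the $\xi$ variable. Since $\chi_k$ is radial, $(\xi_j\partial_{\xi_i}-\xi_i\partial_{\xi_j})\chi_k=0$, so by the Leibniz rule this field commutes with multiplication by $\chi_k$. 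This gives $[\Omega,P_k]=0$. The only care needed is the sign bookkeeping in the transform convention.

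\textbf{Third identity, $[L_j,P_k]f=\widetilde P_{j,k}\partial_tf$.} Write $L_j=t\partial_j+x^j\partial_t$. The piece $t\partial_j$ commutes with $P_k$ by the first identity, so
\[
[L_j,P_k]f=x^jP_k\partial_tf-P_k(x^j\partial_tf).
\]
On the Fourier side this equals
\[
i\partial_{\xi_j}\bigl(\chi_k\widehat{\partial_tf}\bigr)-\chi_k\, i\partial_{\xi_j}\widehat{\partial_tf}=i(\partial_{\xi_j}\chi_k)\widehat{\partial_tf},
\]
which is exactly the symbol defining $\widetilde P_{j,k}$ applied to $\partial_tf$.

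I do not expect a real obstacle here. The one point worth checking is that the constant in front of $\partial_{\xi_j}\chi_k$ matches the stated $i\partial_{\xi_j}(\chi_k(\xi))$ under the paper's Fourier convention. With $F[v]=\int e^{-i\langle x,\xi\rangle}v\,dx$ we have $F[x^jv]=i\partial_{\xi_j}F[v]$, so it does.
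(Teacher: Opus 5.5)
Your proof is correct and follows the paper's route: the paper only says the identities follow by applying the Fourier transform, and you carry that out using $F[x^jv]=i\partial_{\xi_j}F[v]$, $F[\partial_jv]=i\xi_jF[v]$, and the radiality of $\chi_k$ for the rotation fields. Your sign and constant bookkeeping under the paper's convention is right, and the resulting symbol $i(\partial_{\xi_j}\chi_k)\widehat{\partial_tf}$ matches the stated definition of $\widetilde{P}_{j,k}$.
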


Employing the Fourier transform and using the support of $\chi_i(x),\overline{\chi}_i(x)$ yields that the lemma stated below holds.
\begin{lemma}\label{LP2}
For any functions $f,g$ and any positive integers $j,l$ with $|j-l|\geq 3$, we get
\begin{align}
     \label{van1}&P_jP_lf\equiv0,
     \\ \label{van2} &P_l(P_{\leq j-3}fP_jg)\equiv0.
\end{align}
In addition, whenever $\max\{j,k,l\}\geq med\{j,k,l\}+3$, the product projection vanishes: 
\begin{align}\label{van3}
     P_l(P_jfP_kg)=0.
\end{align}
\end{lemma}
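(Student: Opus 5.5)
The statement to prove is Lemma \ref{LP2}. I would prove all three vanishing claims on the Fourier side, using only the support properties of the cutoffs. For $j\geq1$ we have $\operatorname{supp}\chi_j\subseteq\{2^{j-1}\leq|\xi|\leq 2^{j+1}\}$. Also $\operatorname{supp}\chi_0\subseteq\{|\xi|\leq 2\}$, and $\overline{\chi}(\xi/2^{j})$, the symbol of $P_{\leq j}$, is supported in $\{|\xi|\leq 2^{j+1}\}$. Products become convolutions: $\widehat{fg}=(2\pi)^{-3}\widehat f*\widehat g$. So every claim reduces to checking that certain annuli and balls are disjoint, or that a sum of vectors from given sets cannot land in a given annulus.

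\textbf{Step 1: identity \eqref{van1}.} The symbol of $P_jP_l$ is $\chi_j\chi_l$. Suppose $|j-l|\geq3$, say $l\geq j+3$.
\begin{itemize}
\item If $j\geq1$, then $\chi_j$ lives on $|\xi|\leq2^{j+1}$ and $\chi_l$ lives on $|\xi|\geq 2^{l-1}\geq 2^{j+2}$.
\item If $j=0$, the same holds with the bound $|\xi|\leq 2$.
\end{itemize}
In both cases the supports are disjoint, so $\chi_j\chi_l\equiv0$.

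\textbf{Step 2: identity \eqref{van3}.} Write $\widehat{P_l(P_jfP_kg)}(\xi)=\chi_l(\xi)\int\chi_j(\eta)\widehat f(\eta)\chi_k(\xi-\eta)\widehat g(\xi-\eta)\,d\eta$, so the three frequencies satisfy $\xi=\eta+(\xi-\eta)$. Assume without loss of generality that the maximum index is $l$ and that $l\geq \max\{j,k\}+3$. Then $|\eta|+|\xi-\eta|\leq 2^{j+1}+2^{k+1}\leq 2^{l-1}$, while $|\xi|\geq2^{l-1}$ on the support of $\chi_l$. The triangle inequality then leaves only the measure-zero boundary case with both norms at their maxima.

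This boundary case needs care. The cleaner route is to use that $\overline{\chi}=1$ on $|x|\leq1$ and $\overline\chi=0$ on $|x|\geq2$, so $\chi(\xi)=\overline\chi(\xi)-\overline\chi(2\xi)$ vanishes for $|\xi|\leq 1/2$ and for $|\xi|\geq 2$. This makes the supports $\{2^{j-1}\leq|\xi|\leq2^{j+1}\}$ with the endpoints excluded in the open sense. A strict inequality then follows.

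If instead the maximum is $j$, with $j\geq\max\{k,l\}+3$, the same argument applies to $\eta=\xi-(\xi-\eta)$. The case where the maximum is $k$ is symmetric.

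\textbf{Step 3: identity \eqref{van2}.} This is the same convolution argument with $P_{\leq j-3}$ in place of one annulus. The frequency of $P_{\leq j-3}f$ satisfies $|\eta|<2^{j-2}$, and the frequency of $P_jg$ lies in $[2^{j-1},2^{j+1}]$. Hence the output frequency lies in $(2^{j-2},\,2^{j+1}+2^{j-2})$. This is disjoint from $\operatorname{supp}\chi_l$ once $|l-j|\geq3$:
\begin{itemize}
\item if $l\geq j+3$, then $\operatorname{supp}\chi_l\subseteq\{|\xi|\geq 2^{j+2}\}$;
\item if $l\leq j-3$, then $\operatorname{supp}\chi_l\subseteq\{|\xi|\leq 2^{j-2}\}$.
\end{itemize}

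\textbf{Main obstacle.} The only delicate point is the endpoint bookkeeping: the thresholds $3$ are tight, so each claim requires strict inequalities, and the $j=0$ ball has to be handled separately. I would settle this first by recording the open-support statement $\operatorname{supp}\chi_j\subseteq\{2^{j-1}<|\xi|<2^{j+1}\}$ (up to closure) from the definition of $\chi$. After that, every case is a one-line comparison of radii.
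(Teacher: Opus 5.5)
Your proof is correct and is the same Fourier-support argument that the paper's one-line justification invokes. Your radius comparisons, based on the fact that $\chi_j\neq 0$ only on the open annulus $2^{j-1}<|\xi|<2^{j+1}$ for $j\ge 1$, supply the details the paper omits; the one case you leave unmentioned in Step 3, $j\le 2$, is trivial because $P_{\le j-3}$ is then an empty sum.
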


We rely on the lemma below to resolve the derivative loss issue arising in our estimates. 
\begin{lemma}\label{para-product2}
For any scalar functions $f,g$ and any non-negative integer $l$, the commutator satisfies
\begin{align*}
[P_l,f] g:=P_l(f g)-fP_lg=D_1+D_2,
\end{align*}
where the terms $D_1,D_2$ are defined by
\begin{align*}
&D_1=\sum_{j=0}^{+\infty}P_l\left(\left(P_{\leq j+2} g\right)P_jf\right)-\sum_{j=0}^{+\infty}\left(P_{\leq j+2}P_l g\right)P_jf,
\\&D_2=2^{-l}\sum_{j=l-2}^{l+2}L(P_{\leq j-3}f,P_j g).
\end{align*} 
Here the bilinear operator $L(\tilde{f},\tilde{g})$ is given by
\[
L(\tilde{f},\tilde{g})=-\int_0^1\int_{\mathbb R^3}\check{\chi}(2^ly)(2^l)^3(2^ly)\cdot\left((\nabla \tilde{f})(x-sy)\right)\tilde{g}(x-y)dyds,
\]
where $\check{\chi}$ denotes the inverse Fourier transform of $\chi(x)$. 
\end{lemma}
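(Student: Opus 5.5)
The plan is to prove the identity by a direct paraproduct decomposition of $f$, followed by a Taylor expansion of the convolution kernel of $P_l$ on the single piece where $f$ has much lower frequency than $g$. Throughout I work with Schwartz $f,g$, so that all Littlewood--Paley sums converge and may be rearranged; the general case then follows by density. The convention $P_jf=0$ for $j\leq -1$, and hence $P_{\leq m}=0$ for $m<0$, is used freely.

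\emph{Step 1: split into $D_1$ and a remainder.} Write $f=\sum_{j\geq0}P_jf$, so that
\[
[P_l,f]g=\sum_{j\geq0}[P_l,P_jf]g .
\]
For each $j$, decompose $g=P_{\leq j+2}g+\sum_{m\geq j+3}P_mg$. Since $P_lP_{\leq j+2}=P_{\leq j+2}P_l$ (both are Fourier multipliers), the contribution of $P_{\leq j+2}g$ is
\[
P_l\big((P_{\leq j+2}g)P_jf\big)-(P_{\leq j+2}P_lg)P_jf .
\]
Summing over $j$ gives exactly $D_1$. The remaining part is
\[
\sum_{j\geq0}\sum_{m\geq j+3}[P_l,P_jf]P_mg .
\]
Exchanging the order of summation and summing in $j\leq m-3$ turns this into
\[
\sum_{m\geq 0}[P_l,P_{\leq m-3}f]P_mg .
\]

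\emph{Step 2: localize $m$ near $l$.} For fixed $m$, the product $(P_{\leq m-3}f)(P_mg)$ has Fourier support in $\{2^{m-2}\lesssim|\xi|\lesssim 2^{m+2}\}$. By \eqref{van2}, $P_l$ annihilates it unless $|l-m|\leq 2$. Likewise $P_lP_mg=0$ unless $|l-m|\leq 2$ by \eqref{van1}. Hence only $m\in[l-2,l+2]$ survive, and the remainder equals
\[
\sum_{m=l-2}^{l+2}[P_l,P_{\leq m-3}f]P_mg .
\]
For small $l$, in particular $l=0$, the condition $m\geq 3$ needed for $P_{\leq m-3}f\neq0$ leaves only finitely many terms or none. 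This is also the place to check that the kernel used below is the correct one; for $l=0$ the relevant terms vanish, so the formula with $\check\chi$ is only needed for $l\geq 1$.

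\emph{Step 3: the kernel computation.} For $l\geq1$, write
\[
P_lh(x)=\int_{\mathbb R^3}(2^l)^3\check\chi(2^ly)\,h(x-y)\,dy .
\]
Then for $F=P_{\leq m-3}f$ and $G=P_mg$,
\[
[P_l,F]G(x)=\int(2^l)^3\check\chi(2^ly)\big(F(x-y)-F(x)\big)G(x-y)\,dy .
\]
Next insert
\[
F(x-y)-F(x)=-\int_0^1 y\cdot(\nabla F)(x-sy)\,ds
\]
and write $y=2^{-l}(2^ly)$. This produces
\[
[P_l,F]G=2^{-l}L(F,G),
\]
with $L$ exactly as defined in the statement. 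Summing over $m\in[l-2,l+2]$ gives $D_2$.

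The only delicate point I anticipate is the bookkeeping of index ranges, namely verifying that the support conditions in Step 2 really allow the window $|l-m|\leq 2$ rather than something wider given the supports of $\chi_j$ and $\overline{\chi}$. The identity itself then reduces to Fubini and the fundamental theorem of calculus, both justified by the rapid decay of $\check\chi$ and the Schwartz class of $f$ and $g$.
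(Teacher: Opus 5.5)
Your proof is correct and follows essentially the same route as the paper: a Bony-type paraproduct split producing $D_1$, localization of the low--high piece to $|m-l|\le 2$ via Lemma \ref{LP2}, and the fundamental-theorem-of-calculus expansion of the convolution kernel of $P_l$. Splitting $g$ against each $P_jf$ is equivalent to the paper's decomposition of $fg$ and $fP_lg$ separately. You also observe that the $\check\chi(2^l\cdot)(2^l)^3$ kernel represents $P_l$ only for $l\ge 1$, which is harmless because $D_2=0$ when $l=0$; the paper leaves this point implicit.
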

\begin{proof}
By calculation, we get the following Bony para-product decomposition
\begin{align*}
&fg=\sum_{j=0}^{+\infty}P_{\leq j-3}fP_jg+\sum_{j=0}^{+\infty}\left(P_{\leq j+2} g\right)P_jf,
\\&fP_lg=\sum_{j=0}^{+\infty}P_{\leq j-3}fP_jP_l g+\sum_{j=0}^{+\infty}\left(P_{\leq j+2}P_lg\right)P_jf.
\end{align*}
Lemma \ref{LP2} allows us to only consider
\[
   P_l\left(\sum_{j=l-2}^{l+2}P_j gP_{\leq j-3}f\right)-\sum_{j=l-2}^{l+2}\left(P_lP_j g\right)P_{\leq j-3}f.
\]
Let $\tilde{f}=P_{\leq j-3}f,\ \tilde{g}=P_j g$. Then 
\begin{align}\label{comm1}
P_l(\tilde{f}\tilde{g})-\tilde{f}P_l\tilde{g}&=\int_{\mathbb R^3}\check{\chi_l}(y)\tilde{f}(x-y)\tilde{g}(x-y)dy-\tilde{f}(x)\int_{\mathbb R^3}\check{\chi_l}(y)\tilde{g}(x-y)dy
\nonumber\\&=\int_{\mathbb R^3}\check{\chi}(2^ly)(2^l)^3\left(\tilde{f}(x-y)-\tilde{f}(x)\right)\tilde{g}(x-y)dy
\nonumber\\&=\int_{\mathbb R^3}\check{\chi}(2^ly)(2^l)^3\int_0^1\frac{d}{ds}\left(\tilde{f}(x-sy)\right)\tilde{g}(x-y)dsdy
\nonumber\\&=-\int_0^1\int_{\mathbb R^3}\check{\chi}(2^ly)(2^l)^3\left(y\cdot(\nabla \tilde{f})(x-sy)\right)\tilde{g}(x-y)dyds
\nonumber\\&=-2^{-l}\int_0^1\int_{\mathbb R^3}\check{\chi}(2^ly)(2^l)^3\left((2^ly)\cdot(\nabla \tilde{f})(x-sy)\right)\tilde{g}(x-y)dyds.
\end{align}
This concludes the proof.
\end{proof}

\subsection{Littlewood-Paley decomposition on the sphere}\label{sphere}

We proceed to present an alternative equivalent definition for the operator $R_k$. To this end, we first recall a fundamental property of spherical harmonics known as the addition theorem. 
\begin{theorem}[Addition Theorem]\label{thm:addition}
    Let \(\{\varphi_{l,m}:-l\leq m\leq l\}\) be an orthonormal basis of spherical harmonics of degree $l$ on the sphere $S^2$, satisfying the orthogonality relation
    \[
        \int_{\mathbb{S}^{2}} \varphi_{l,m}(\theta,\phi) \overline{\varphi_{l^{'},m^{'}}(\theta,\phi)} \,d_{S^{2}} =  \delta_{ll^{'}}\delta_{mm^{'}}, \quad -l \leq m \leq l,\ -l^{'} \leq m^{'} \leq l^{'}.
    \]
    Then the identity
    \[
        \sum_{m=l}^{-l} \varphi_{l,m}(\theta,\phi) \overline{\varphi_{l,m}}(\theta^{'},\phi^{'})
        = \frac{2l+1}{4\pi} \, Q_{l}(\cos\gamma),
    \]
    holds, where $Q_l$ stands for the Legendre polynomial of order $l$: $Q_0(z)=1$ and for each integer $l\geq1$,
    \[
      Q_l(z)=\frac{1}{2^ll!}\frac{d^l}{dz^l}\left((z^2-1)^l\right),\ |z|\leq 1.
    \] 
    The angle $\gamma$ between the two spherical directions satisfies 
    \[
    \cos\gamma=\cos\theta\cos\theta^{'}+\sin\theta\sin\theta^{'}\cos(\phi-\phi^{'}).
    \]
\end{theorem}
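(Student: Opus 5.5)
The plan is to reduce the Addition Theorem to the fact that the zonal harmonic of degree $l$ is a multiple of the Legendre polynomial $Q_l$. Set
\[
K_l(\omega,\omega')=\sum_{m=-l}^{l}\varphi_{l,m}(\omega)\overline{\varphi_{l,m}(\omega')},
\]
with $\omega=(\theta,\phi)$ and $\omega'=(\theta',\phi')$. Then $K_l$ is the integral kernel of the orthogonal projection of $L^2(S^2)$ onto $\mathcal H_l$, the $(2l+1)$-dimensional eigenspace of $-\Delta_{S^2}$ with eigenvalue $l(l+1)$. This kernel does not depend on which orthonormal basis of $\mathcal H_l$ is chosen.

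\textbf{Step 1: rotation invariance.} Let $\rho\in SO(3)$. Since $\Delta_{S^2}$ commutes with rotations, $\mathcal H_l$ is rotation invariant. The family $\{\varphi_{l,m}\circ\rho\}$ is again an orthonormal basis of $\mathcal H_l$, because the surface measure is rotation invariant. Basis-independence of the projection kernel then gives
\[
K_l(\rho\omega,\rho\omega')=K_l(\omega,\omega').
\]
Since $SO(3)$ acts transitively on pairs of points at a fixed angle, $K_l(\omega,\omega')=F_l(\langle\omega,\omega'\rangle)$ for some function $F_l$ of $\cos\gamma$.

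\textbf{Step 2: identify $F_l$.} Fix $\omega'$ as the north pole $\theta'=0$. The function $\omega\mapsto F_l(\cos\theta)$ lies in $\mathcal H_l$ and is independent of $\phi$. By \eqref{Sph_Lap_def2} with $z=\cos\theta$, the eigenvalue equation becomes the Legendre equation
\[
\bigl((1-z^2)F'\bigr)'+l(l+1)F=0 .
\]
Its only solutions bounded on $[-1,1]$ are multiples of $Q_l$; the second solution has a logarithmic singularity at $z=\pm1$. To confirm that $Q_l$ as defined by Rodrigues' formula solves this equation, I would take the standard route: differentiate $(z^2-1)w'=2lzw$ with $w=(z^2-1)^l$ a total of $l+1$ times. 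Hence $F_l=c_lQ_l$.

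\textbf{Step 3: the constant.} Put $\omega=\omega'$ and integrate over $S^2$. Orthonormality gives
\[
\int_{S^2}K_l(\omega,\omega)\,d_{S^2}=2l+1 .
\]
On the other hand, $K_l(\omega,\omega)=c_lQ_l(1)$ and $Q_l(1)=1$. The latter follows from Leibniz's rule applied to $(z-1)^l(z+1)^l$: the only term surviving at $z=1$ is $l!\,2^l$. Therefore $4\pi c_l=2l+1$, which is the claimed identity. (The paper's summation $\sum_{m=l}^{-l}$ is simply the sum over all $m$.)

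\textbf{Main obstacle.} The delicate point is Step 2. One must justify both that $\mathcal H_l$, as defined via the eigenproblem, consists exactly of degree-$l$ harmonics, and that the $\phi$-independent eigenfunctions are one-dimensional and bounded. I would handle this by the ODE uniqueness argument: bounded solutions of Legendre's equation with regular-singular points at $\pm1$ and indicial exponents $0,0$ form a one-dimensional space.
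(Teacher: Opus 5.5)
Your argument is correct, and it is the standard proof behind the reference the paper cites in place of a proof (Atkinson--Han, Theorem 2.9): the projection kernel $K_l$ is basis-independent and rotation invariant, so it reduces to a zonal eigenfunction that is identified with $Q_l$, and the constant comes from $\int_{S^2}K_l(\omega,\omega)\,d_{S^2}=2l+1$ together with $Q_l(1)=1$. The one variation is that you identify the zonal function through Legendre's equation, which fits the paper's Rodrigues definition of $Q_l$ directly, rather than through uniqueness of the rotation-invariant Legendre harmonic; also, your stated ``main obstacle'' is lighter than you expect, since you only need $-\Delta_{S^2}\varphi_{l,m}=l(l+1)\varphi_{l,m}$ and not the converse characterization of the eigenspace.
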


\begin{proof}
A complete proof is provided in \cite[Theorem 2.9]{Atkinson}, so we omit the details.
\end{proof}

 Using Theorem \ref{thm:addition} yields that for all $k\geq 1$,
\begin{align}\label{ALP_def}
   (R_kf)(x)=\sum_{l=2^{k-1}}^{2^{k}-1}\frac{2l+1}{4\pi}\int_{S^2}f(|x|\xi)Q_l\left(\langle \xi,\frac{x}{|x|}\rangle\right)d_{S^2(\xi)}.
\end{align}

In polar coordinates \eqref{polar}, applying the angular partial derivatives $\partial_{\theta},\partial_{\phi}$ yields the explicit expressions for $\widetilde{\Omega}_{ij}$. Consequently, the norm $\|\Omega R_kf\|^2_{L^2(S^2)}$ thereby transforms into an integral over the variables $\theta,\phi$. Finally, integrating by parts, we obtain
\begin{align*}
\int_{S^2}|\Omega R_kf|^2d_{S^2}=-\int_{S^2}\left(\overline{R_kf}(\Delta_{S^2}R_kf)\right)d_{S^2}.
\end{align*}
Using the definition \eqref{Rk_def} of $R_k$ yields the following lemma.
\begin{lemma}\label{equal_omiga}
For $k\geq1$ and any scalar function $f$ defined on the sphere $S^2$, we have
\begin{align*}
\|\Omega R_kf\|^2_{L^2(S^2)}=\sum_{l=2^{k-1}}^{2^{k}-1}\sum_{m=-l}^l\mu_l|\langle f,\overline{\varphi_{l,m}}\rangle|^2.
\end{align*}
\end{lemma}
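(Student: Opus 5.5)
The plan is a direct spectral computation on $S^2$, using the spherical-harmonic expansion of $R_kf$ and the integration-by-parts identity recorded just before the statement,
\[
\int_{S^2}|\Omega R_kf|^2d_{S^2}=-\int_{S^2}\overline{R_kf}\,(\Delta_{S^2}R_kf)\,d_{S^2}.
\]

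First I would justify that identity. In the polar coordinates \eqref{polar}, each $\widetilde{\Omega}_{ij}$ is a first-order operator in $\partial_\theta,\partial_\phi$ with smooth coefficients. Each is also skew-adjoint with respect to $d_{S^2}$, as in \eqref{int_by_parts}. Hence
\[
\sum_{i<j}\int|\widetilde{\Omega}_{ij}g|^2\,d_{S^2}=-\int\overline{g}\sum_{i<j}\widetilde{\Omega}_{ij}^2g\,d_{S^2}=-\int\overline{g}\,\Delta_{S^2}g\,d_{S^2},
\]
where I use $\Delta_{S^2}=\Omega\cdot\Omega$. Applying this with $g=R_kf$ gives the displayed identity. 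Complex conjugates cause no trouble, since the $\widetilde{\Omega}_{ij}$ are real operators.

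Next I would insert the definition \eqref{Rk_def}. For $k\ge1$, $R_kf$ is the finite sum $\sum_{l=2^{k-1}}^{2^k-1}\sum_{m}c_{l,m}\varphi_{l,m}$ with $c_{l,m}=\langle f,\overline{\varphi_{l,m}}\rangle$. Each $\varphi_{l,m}$ is an eigenfunction, so
\[
-\Delta_{S^2}R_kf=\sum_{l,m}\mu_l\,c_{l,m}\varphi_{l,m}.
\]
Pairing this with $\overline{R_kf}=\sum\overline{c_{l',m'}}\,\overline{\varphi_{l',m'}}$ and using orthonormality $\int\varphi_{l,m}\overline{\varphi_{l',m'}}=\delta_{ll'}\delta_{mm'}$ collapses the double sum. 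The result is $\sum_{l=2^{k-1}}^{2^k-1}\sum_{m=-l}^l\mu_l|c_{l,m}|^2$, which is the claim.

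The only step needing care, and the one I expect to be the main obstacle, is the integration by parts at the coordinate singularities $\theta=0,\pi$ of the polar chart. I would avoid that issue entirely by working intrinsically. The vector fields $\widetilde{\Omega}_{ij}$ are tangent to $S^2$ and divergence-free with respect to surface measure, since they generate rotations. Skew-adjointness on the closed manifold $S^2$ therefore holds without boundary terms. Smoothness of $R_kf$ is automatic, because it is a finite combination of spherical harmonics.
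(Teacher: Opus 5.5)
Your proposal is correct and follows the paper's route: integrate by parts to get $\|\Omega R_kf\|^2_{L^2(S^2)}=-\int_{S^2}\overline{R_kf}\,\Delta_{S^2}R_kf\,d_{S^2}$. Then expand $R_kf$ in the $\varphi_{l,m}$ with $2^{k-1}\leq l\leq 2^k-1$, and use the eigenvalue relation and orthonormality. Your intrinsic justification of the integration by parts (the rotation fields are tangent and divergence-free on the closed manifold $S^2$) disposes of the pole singularities that the paper's polar-coordinate computation leaves implicit.
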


The combination of the definition \eqref{Rk_def} of $R_k$, the Cauchy–Schwarz inequality, the identity
\[
\sum_{m=-l}^l|\varphi_{l,m}|^2=\frac{2l+1}{4\pi}Q_l(1)=\frac{2l+1}{4\pi},
\]
and Lemma \ref{equal_omiga} gives the following Sobolev embedding inequality on the unit sphere \(S^2\).
\begin{lemma}\label{ALP1}
For $k\geq1$, we have
\[
\|R_kf\|_{L^{\infty}(S^2)}\leq C\|\Omega R_kf\|_{L^2(S^2)}.
\]
\end{lemma}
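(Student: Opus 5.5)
The statement is Lemma \ref{ALP1}: for $k\geq1$, $\|R_kf\|_{L^\infty(S^2)}\leq C\|\Omega R_kf\|_{L^2(S^2)}$. I would prove it by a pointwise Cauchy--Schwarz argument in the spherical-harmonic expansion, following the hint given just before the lemma.

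Fix a point $\omega\in S^2$ and write $a_{l,m}=\langle f,\overline{\varphi_{l,m}}\rangle$. By the definition \eqref{Rk_def},
\[
R_kf(\omega)=\sum_{l=2^{k-1}}^{2^k-1}\sum_{m=-l}^l a_{l,m}\varphi_{l,m}(\omega).
\]
Since $l\geq 2^{k-1}\geq1$, we have $\mu_l=l(l+1)>0$. Insert the weights $\mu_l^{1/2}\cdot\mu_l^{-1/2}$ and apply Cauchy--Schwarz over the pairs $(l,m)$:
\[
|R_kf(\omega)|^2\leq\Big(\sum_{l,m}\mu_l|a_{l,m}|^2\Big)\Big(\sum_{l=2^{k-1}}^{2^k-1}\mu_l^{-1}\sum_{m=-l}^l|\varphi_{l,m}(\omega)|^2\Big).
\]
By Lemma \ref{equal_omiga}, the first factor equals $\|\Omega R_kf\|^2_{L^2(S^2)}$.

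For the second factor I use the addition theorem (Theorem \ref{thm:addition}) on the diagonal $\omega=\omega'$, where $\cos\gamma=1$ and $Q_l(1)=1$. This gives $\sum_m|\varphi_{l,m}(\omega)|^2=\frac{2l+1}{4\pi}$, uniformly in $\omega$. The second factor is therefore
\[
\sum_{l=2^{k-1}}^{2^k-1}\frac{2l+1}{4\pi\, l(l+1)}\leq \frac{1}{4\pi}\sum_{l=2^{k-1}}^{2^k-1}\frac{3}{l}.
\]
This dyadic block of the harmonic series is bounded uniformly in $k$: it has $2^{k-1}$ terms, each at most $2^{1-k}$, so it is $\leq 3/(4\pi)\cdot 1$. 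Taking the supremum over $\omega$ yields the lemma with an absolute constant $C$.

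The only step needing care is that the constant is independent of $k$. This is exactly why the bound is stated with $\Omega$ and not with $\Delta_{S^2}$ or with no derivative: one angular derivative on $S^2$ is critical for $L^\infty$. Without the dyadic localisation, the full harmonic sum would diverge logarithmically, but restricting to one block makes it bounded.
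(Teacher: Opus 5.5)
Your proof is correct and is the paper's argument: Cauchy--Schwarz in the spherical-harmonic expansion with the weights $\mu_l^{\pm1/2}$, the diagonal addition-theorem identity $\sum_m|\varphi_{l,m}|^2=\frac{2l+1}{4\pi}$, and Lemma \ref{equal_omiga}. You also state explicitly that $\sum_{l=2^{k-1}}^{2^k-1}\frac{2l+1}{4\pi\,l(l+1)}\leq\frac{3}{4\pi}$ uniformly in $k$, a check the paper leaves implicit.
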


We now give the commutation relations involving $R_k$, $P_j$, the vector fields and Fourier transform.
\begin{lemma}\label{ALP_commu}
The operator $R_k$ commutes with $\partial_t,\partial_r,\Omega,P_j$ and the Fourier transform $F$; that's
\[
[\Omega,R_k]=[\partial_r,R_k]=[\partial_t,R_k]=[R_k,P_j]=[R_k,F]=0.
\]
\end{lemma}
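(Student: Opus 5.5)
We must prove Lemma ALP_commu: $R_k$ commutes with $\partial_t$, $\partial_r$, $\Omega$, $P_j$ and the Fourier transform $F$.

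We work from the kernel representation \eqref{ALP_def},
\[
(R_kf)(x)=\int_{S^2}f(|x|\xi)\,K_k\!\left(\left\langle \xi,\tfrac{x}{|x|}\right\rangle\right)d_{S^2(\xi)},\qquad K_k(z)=\sum_{l=2^{k-1}}^{2^k-1}\frac{2l+1}{4\pi}Q_l(z),
\]
together with $R_0f=\frac1{4\pi}\int_{S^2}f(|x|\xi)\,d_{S^2(\xi)}$. In both cases $R_k$ acts only on the angular variable, separately on each sphere $|x|=r$, through a zonal kernel depending only on $\langle\xi,\eta\rangle$.

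\textbf{Commutation with $\partial_t$ and $\partial_r$.} The kernel is independent of $t$ and $r$. Writing $x=r\eta$, we have $(R_kf)(t,r\eta)=\int_{S^2}f(t,r\xi)K_k(\langle\xi,\eta\rangle)\,d_{S^2(\xi)}$. Differentiating under the integral in $t$ or $r$ with $\eta$ fixed gives $[\partial_t,R_k]=[\partial_r,R_k]=0$ immediately.

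\textbf{Commutation with $\Omega$.} Since $\widetilde\Omega_{ij}$ generates rotations, it suffices to show that $R_k$ commutes with every rotation $A\in SO(3)$, i.e. that $R_k(f\circ A)=(R_kf)\circ A$. This follows from the change of variables $\xi\mapsto A\xi$, using that $d_{S^2}$ is rotation invariant and that $\langle A\xi,A\eta\rangle=\langle\xi,\eta\rangle$. Differentiating the identity along the one-parameter group $e^{s\widetilde\Omega_{ij}}$ at $s=0$ gives $[\widetilde\Omega_{ij},R_k]=0$. Alternatively, one can argue that $\Delta_{S^2}$ commutes with each $\widetilde\Omega_{ij}$, so $\widetilde\Omega_{ij}$ preserves each eigenspace spanned by $\{\varphi_{l,m}\}_m$, and $R_k$ is the orthogonal projection onto a sum of such eigenspaces.

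\textbf{Commutation with $F$.} Again use rotation invariance. For $f(x)=g(r)\varphi_{l,m}(\eta)$, the Funk–Hecke formula (or the plane-wave expansion of $e^{-i\langle x,\xi\rangle}$ in Legendre polynomials) gives $F[f](\rho\omega)=\tilde g_l(\rho)\varphi_{l,m}(\omega)$: the Fourier transform preserves the angular degree $l$ and acts only radially, via a Hankel-type transform. By density of finite sums $\sum g_{l,m}(r)\varphi_{l,m}$ in $L^2$ and boundedness of both operators, $FR_k=R_kF$.

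\textbf{Commutation with $P_j$.} The symbol $\chi_j(\xi)$ is radial, so $P_j=F^{-1}\chi_jF$. Multiplication by a radial function commutes with $R_k$, because $R_k$ acts on each sphere separately; combined with the commutation with $F$, this yields $[R_k,P_j]=0$.

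The only step with real content is the Fourier-transform case, i.e. degree preservation. I would invoke Funk–Hecke rather than compute it by hand.
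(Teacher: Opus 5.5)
Your proof is correct. The paper gives no argument of its own here; it only refers to \cite[Proposition 3.1]{Guo}. Your proposal is therefore a self-contained alternative to a citation, not a reconstruction of the paper's proof.

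What your version adds is that it isolates exactly what is used. $R_k$ is given by a zonal kernel $K_k(\langle\xi,\eta\rangle)$ acting separately on each sphere $|x|=r$. This immediately gives commutation with $\partial_t$, with $\partial_r$, with rotations (hence with $\widetilde\Omega_{ij}$), and with multiplication by any radial function. The only nontrivial input is that $F$ preserves angular degree. After that, $[R_k,P_j]=0$ follows from $P_j=F^{-1}\chi_jF$ with $\chi_j$ radial, as you say. The citation buys brevity; your argument buys transparency.

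You can also do the Fourier step by rotation invariance alone, without Funk--Hecke or Bessel functions. With the paper's convention, $F(\partial_jf)=i\xi_j\widehat f$ and $F(x^ig)=i\partial_{\xi_i}\widehat g$. Hence $F(\widetilde\Omega_{ij}f)=(\xi_i\partial_{\xi_j}-\xi_j\partial_{\xi_i})\widehat f$, so $F$ commutes with $\Delta_{S^2}=\Omega\cdot\Omega$. It therefore maps each eigenspace of fixed angular degree $l$ into itself. Since $(2\pi)^{-3/2}F$ is unitary on $L^2(\mathbb R^3)$, $F$ commutes with the orthogonal projections $R_k$ onto sums of these eigenspaces. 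This reduces every part of the lemma to rotation invariance.

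The remaining technical points are routine and handled as you indicate: differentiating under the integral for smooth $f$, the point $r=0$ for $\partial_r$, and passing from a dense class to all of $L^2$ using the boundedness of $R_k$ and $F$.
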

\begin{proof}
See \cite[Proposition 3.1]{Guo}.
\end{proof}

Next let's present the standard selection rule for the triple product of spherical harmonics.
\begin{lemma}\label{three}
For $\varphi_{l,m}$ defined previously, the integral
\[
\int_{S^2}\varphi_{l_1,m_1}\varphi_{l_2,m_2}\overline{\varphi_{l_3,m_3}}d_{S^2}
\]
can be nonzero only if
\[
m_3=m_1+m_2,\ |l_1-l_2|\leq l_3\leq l_1+l_2.
\]
\end{lemma}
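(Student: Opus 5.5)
The selection rule splits into two independent parts, and I will handle each by a different structural argument. Throughout, I use the standard realization of the orthonormal basis in the polar coordinates \eqref{polar}:
\[
\varphi_{l,m}(\theta,\phi)=c_{l,m}P_l^{m}(\cos\theta)e^{im\phi},
\]
where $P_l^m$ is the associated Legendre function. Equivalently, $\varphi_{l,m}$ is the joint eigenfunction of $-\Delta_{S^2}$ (eigenvalue $\mu_l$) and of the rotation generator $-i\partial_\phi$, which in these coordinates is $-i\widetilde{\Omega}_{23}$ up to sign (eigenvalue $m$). The rule $m_3=m_1+m_2$ is only meaningful for such an adapted basis, so I state this convention explicitly at the start.

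\textbf{Step 1: the azimuthal rule.} Insert the product form of each factor into the triple integral. Since $\overline{\varphi_{l_3,m_3}}$ carries the factor $e^{-im_3\phi}$ and $d_{S^2}=\sin\theta\,d\theta\,d\phi$, Fubini gives a $\phi$-integral
\[
\int_0^{2\pi}e^{i(m_1+m_2-m_3)\phi}\,d\phi ,
\]
which vanishes unless $m_3=m_1+m_2$.

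\textbf{Step 2: the upper bound $l_3\leq l_1+l_2$.} I use the fact that $\varphi_{l,m}$ is the restriction to $S^2$ of a homogeneous harmonic polynomial of degree $l$. The product $\varphi_{l_1,m_1}\varphi_{l_2,m_2}$ is then the restriction of a homogeneous polynomial $p$ of degree $n=l_1+l_2$. By the classical decomposition $p=h_n+|x|^2h_{n-2}+|x|^4h_{n-4}+\cdots$ with each $h_i$ harmonic and homogeneous of degree $i$, the restriction of $p$ to the sphere lies in $\bigoplus_{i\leq n}\mathcal H_i$, where $\mathcal H_i$ denotes the spherical harmonics of degree $i$. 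Since spherical harmonics of different degrees are orthogonal (they are eigenfunctions of the self-adjoint $\Delta_{S^2}$ with distinct eigenvalues $\mu_l$), the triple integral vanishes whenever $l_3>l_1+l_2$.

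\textbf{Step 3: the lower bound $|l_1-l_2|\leq l_3$.} I reuse Step 2 with the roles permuted, noting that $\overline{\varphi_{l_3,m_3}}$ is again a spherical harmonic of degree $l_3$. Grouping the integrand as $\varphi_{l_1,m_1}\cdot\bigl(\varphi_{l_2,m_2}\overline{\varphi_{l_3,m_3}}\bigr)$, the bracket lies in $\bigoplus_{i\leq l_2+l_3}\mathcal H_i$. Writing $\varphi_{l_1,m_1}=\overline{\overline{\varphi_{l_1,m_1}}}$, the integral is the $L^2(S^2)$ pairing of the bracket with a degree-$l_1$ harmonic, so orthogonality gives vanishing unless $l_1\leq l_2+l_3$. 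The symmetric grouping gives $l_2\leq l_1+l_3$. Together these yield $|l_1-l_2|\leq l_3$.

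The only genuinely nontrivial ingredient, and so the main point to check, is the harmonic decomposition of homogeneous polynomials used in Step 2. I would cite it as standard (for instance from \cite{Atkinson}) or prove it quickly: multiplication by $|x|^2$ is adjoint to $\Delta$ under the Fischer inner product on homogeneous polynomials, so degree-$n$ homogeneous polynomials split orthogonally as $\mathcal H_n\oplus|x|^2\mathcal P_{n-2}$, and iterating gives the decomposition. Everything else is orthogonality together with the explicit $\phi$-dependence of the basis.
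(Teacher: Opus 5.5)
Your proof is correct, and it takes a different route from the paper. The paper settles the lemma in one step. It quotes the Clebsch--Gordan expansion $\varphi_{l_1,m_1}\varphi_{l_2,m_2}=\sum_{L,M}(\cdots)\,C^{L0}_{l_10l_20}C^{LM}_{l_1m_1l_2m_2}\varphi_{L,M}$ from Varshalovich, invokes the vanishing rules for $C^{LM}_{l_1m_1l_2m_2}$, and concludes by orthogonality. You use three more elementary ingredients instead. You get the azimuthal rule directly from the $e^{im\phi}$ factors. You get the upper bound $l_3\leq l_1+l_2$ from the decomposition $\mathcal P_n=\mathcal H_n\oplus|x|^2\mathcal P_{n-2}$ of homogeneous polynomials, together with orthogonality of distinct eigenspaces of $\Delta_{S^2}$. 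You get the lower bound by regrouping the triple product, using that $\overline{\varphi_{l,m}}$ is again a harmonic of degree $l$. The paper's route is shorter, but it rests on external machinery and on the standard $m$-adapted basis (joint eigenfunctions of $\Delta_{S^2}$ and $\partial_\phi$) that is implicit in Varshalovich and never stated in the paper. Your argument is self-contained and makes that convention explicit; you are right that $m_3=m_1+m_2$ has no meaning for an arbitrary orthonormal basis of each eigenspace. Your degree argument is also basis-independent, and that is exactly the part of the lemma used later: in Lemma \ref{xw1}, in Lemma \ref{ALP_ortho}, and in \eqref{add3}, only the triangle condition on the $l$'s enters. As a small bonus, the expansion $p=h_n+|x|^2h_{n-2}+\cdots$ also gives the parity condition $l_1+l_2+l_3\equiv 0 \pmod 2$, which in the paper's route comes from the factor $C^{L0}_{l_10l_20}$.
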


\begin{proof}
By using identity (9) given in Section 5.6.2 of \cite{Varshalovich}, we get
\[
\varphi_{l_1,m_1}\varphi_{l_2,m_2}=\sum_{L,M}\sqrt{\frac{(2l_1+1)(2l_2+1)}{4\pi(2L+1)}}C^{L0}_{l_10l_20}C^{LM}_{l_1m_1l_2m_2}\varphi_{L,M}.
\]
In section 8.1.1 of \cite{Varshalovich}, Clebsch-Gordan coefficients $C^{LM}_{l_1m_1l_2m_2}$ vanish unless 
\[
m_1+m_2=M,\ |l_1-l_2|\leq L\leq l_1+l_2.
\]
By using the orthogonality of $\varphi_{l,m}$, we conclude the proof. 
\end{proof}

Since $x/r$ is linear combination of $\varphi_{1,m}$, $m=-1,0,1$, using the definition \eqref{Rk_def} for $R_{k_1}g$ and applying Lemma \ref{three} gives the following lemma, concerning the commutation of $R_k$ and $\nabla$.
\begin{lemma}\label{xw1}
For $k\geq1$ and any scalar function $g$,
\begin{align}\label{one1}
    R_k\left(\frac xrg\right)=\sum_{k_1=k-1}^{k+1}R_k\left(\frac xrR_{k_1}g\right).
\end{align}
\end{lemma}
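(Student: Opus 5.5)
The proof rests on two facts: multiplication by $x/r$ only moves spherical degree by one, and the spherical projections are orthogonal. Since $R_k(x g/r)$ depends only on the spherical-harmonic coefficients of $xg/r$ in degrees $l\in[2^{k-1},2^k-1]$, I will expand $g=\sum_{k_1\ge0}R_{k_1}g$. Then
\[
R_k\Big(\frac xr g\Big)=\sum_{k_1}R_k\Big(\frac xr R_{k_1}g\Big).
\]
It suffices to show that every term with $|k_1-k|\ge 2$ vanishes. Exchanging $R_k$ with the infinite sum is justified in $L^2(S^2)$ on each sphere $|x|=r$: $R_k$ is a bounded (orthogonal) projection there, multiplication by the bounded function $x/r$ is continuous, and $\sum R_{k_1}g$ converges in $L^2(S^2)$.

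The key step is to fix $r$ and write each component $x^i/r=\eta^i$ as a linear combination of $\varphi_{1,m}$, $m=-1,0,1$. Then write $R_{k_1}g$ as a combination of $\varphi_{l_2,m_2}$ with $l_2\in[2^{k_1-1},2^{k_1}-1]$, or $l_2=0$ if $k_1=0$. The coefficient of $\varphi_{l_3,m_3}$ in the product $\eta^i R_{k_1}g$ is a sum of integrals
\[
\int_{S^2}\varphi_{1,m}\varphi_{l_2,m_2}\overline{\varphi_{l_3,m_3}}\,d_{S^2}.
\]
By Lemma \ref{three}, each such integral vanishes unless $|l_2-1|\le l_3\le l_2+1$. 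Consequently $\eta^iR_{k_1}g$ has spherical degrees only in $[2^{k_1-1}-1,\,2^{k_1}]$, with $[0,1]$ when $k_1=0$.

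Next I apply $R_k$, which keeps only degrees $l_3\in[2^{k-1},2^k-1]$, and check the degree count. If $k_1\ge k+2$, the smallest degree present is $2^{k_1-1}-1\ge 2^{k+1}-1>2^k-1$, so the term vanishes. If $k_1\le k-2$, the largest degree present is $2^{k_1}\le 2^{k-2}<2^{k-1}$; this also covers $k_1=0$ when $k\ge2$, since degree at most $1<2^{k-1}$. So that term vanishes too. Only $k_1\in\{k-1,k,k+1\}$ survive, which is \eqref{one1}. When $k=1$, the surviving index range includes $k_1=0$, consistent with the lemma's convention $k_1=k-1=0$.

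I expect no serious obstacle. The only care needed is the boundary bookkeeping of the dyadic blocks at small $k$, namely the case $k_1=0$ with block $\{0\}$, and the convergence justification for exchanging $R_k$ with the sum. Both are routine.
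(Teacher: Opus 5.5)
Your proposal is correct and follows the paper's argument. The paper likewise writes $x/r$ as a combination of the degree-one harmonics $\varphi_{1,m}$ and applies the selection rule of Lemma~\ref{three} to $R_{k_1}g$, so multiplication by $x/r$ moves spherical degree by at most one and only $k_1\in\{k-1,k,k+1\}$ survive; your explicit degree bookkeeping (including the $k_1=0$ block) and the $L^2(S^2)$ justification for exchanging $R_k$ with the sum fill in details the paper leaves implicit.
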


Using the definition \eqref{Rk_def} of $R_k$ and applying Lemma \ref{three}, we establish a counterpart of Lemma \ref{LP2} in the setting of the Littlewood–Paley decomposition on the sphere.
\begin{lemma}\label{ALP_ortho}
If $j\geq1$, $l\geq1$ and $|j-l|\geq 1$, for any scalar functions $f,g$, we have
\begin{align*}
     &R_jR_lf\equiv0.
\end{align*}
If $\max\{k,k_1,k_2\}\geq med\{k,k_1,k_2\}+2$, it holds that
\[
     R_k(R_{k_1}fR_{k_2}g)=0.
\]
\end{lemma}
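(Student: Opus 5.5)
The plan is to work sphere by sphere. Since $R_k$ acts only in the angular variable (by \eqref{Rk_def}, with $|x|=r$ a frozen parameter), it suffices to prove both identities for functions on $S^2$ and then apply them for each fixed $r$ (and $t$). I will use three facts. For $k\geq1$, $R_k$ is the orthogonal projection onto $\mathrm{span}\{\varphi_{l,m}: 2^{k-1}\leq l\leq 2^k-1\}$. For $k=0$, $R_0$ projects onto the constants, i.e.\ degree $l=0$. These degree blocks are pairwise disjoint.

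For the first identity, take $j,l\geq1$ with $j\neq l$. Then $R_lf$ is a combination of $\varphi_{l',m}$ with $l'\in[2^{l-1},2^l-1]$. The block $[2^{j-1},2^j-1]$ is disjoint from this one. By orthonormality, $\langle R_lf,\overline{\varphi_{l'',m}}\rangle=0$ for every $l''$ in the $j$-block, so $R_jR_lf\equiv0$.

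For the second identity, I use the fact that $R_{k_1}f$ and $R_{k_2}g$ are finite linear combinations of spherical harmonics on each sphere. So the product $R_{k_1}fR_{k_2}g$ is a finite sum of terms $\varphi_{l_1,m_1}\varphi_{l_2,m_2}$, with $l_i$ in the $k_i$-block (where $l_i=0$ if $k_i=0$), and there are no convergence issues. By Lemma \ref{three}, $\langle\varphi_{l_1,m_1}\varphi_{l_2,m_2},\overline{\varphi_{L,M}}\rangle$ vanishes unless $|l_1-l_2|\leq L\leq l_1+l_2$. Hence $R_k$ of each such term is zero unless some $L$ in the $k$-block satisfies this triangle condition. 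It then suffices to show that the hypothesis $\max\{k,k_1,k_2\}\geq med\{k,k_1,k_2\}+2$ makes the triangle condition impossible. Write $\mu=med\{k,k_1,k_2\}$. Every index other than the maximal one is at most $\mu$, so the corresponding degree is at most $2^{\mu}-1$.

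\emph{Case $k$ is the maximum.} Then $L\geq 2^{k-1}\geq 2^{\mu+1}$. On the other hand,
\[
l_1+l_2\leq 2(2^{\mu}-1)<2^{\mu+1}.
\]
So $L>l_1+l_2$, and the triangle condition fails.

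\emph{Case $k_1$ is the maximum.} (The case $k_2$ is symmetric.) Then $l_1\geq 2^{k_1-1}\geq 2^{\mu+1}$, while $l_2\leq 2^{\mu}-1$ and $L\leq 2^{\mu}-1$. This gives
\[
|l_1-l_2|\geq 2^{\mu+1}-2^{\mu}+1>2^{\mu}-1\geq L,
\]
so again the triangle condition fails. If the maximal index is $\geq2$, the degree-$0$ block never occurs for it, so the bounds above are valid. The degenerate cases in which some non-maximal index equals $0$ only lower the degrees involved, and the same inequalities hold.

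The only real subtlety is bookkeeping. I must make sure that Lemma \ref{three}, stated with $\overline{\varphi_{l_3,m_3}}$, applies to the pairing used in $R_k$. This follows from $\overline{\varphi_{l,m}}$ being again a degree-$l$ harmonic (up to relabelling $m$). I must also make sure that the degree-$0$ convention for $R_0$ is handled consistently at the dyadic endpoints. I expect these endpoint checks, rather than any analytic difficulty, to be the main place where care is needed.
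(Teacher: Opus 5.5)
Your proposal is correct and takes the same route as the paper. The paper's proof simply invokes the definition \eqref{Rk_def}, under which $R_k$ is the orthogonal projection onto a dyadic block of spherical-harmonic degrees, together with the triangle selection rule of Lemma \ref{three}; your bookkeeping ($L\geq 2^{\mu+1}>l_1+l_2$ when $k$ is maximal, $|l_1-l_2|\geq 2^{\mu}+1>L$ when $k_1$ or $k_2$ is maximal) supplies the details the paper leaves implicit. Your conjugation concern is unnecessary: the coefficient $\langle \varphi_{l_1,m_1}\varphi_{l_2,m_2},\overline{\varphi_{L,M}}\rangle$ that appears in $R_k$ is exactly the integral in Lemma \ref{three}.
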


The following lemma plays a key role in estimating $M_{high2}(u(t))$ and $M_{high4}(u(t))$.
\begin{lemma}\label{diff1}
Let
\[
\widetilde{Q}_k(z)=\sum_{l=2^{k-1}}^{2^{k}-1}\frac{2l+1}{4\pi}Q_l(z).
\]
Then for any scalar functions $f,g$,
\begin{align*}
\|R_k(fg)-fR_kg\|_{L^2(S^2)}
=&\left\|\int_{S^2}\left(f(t,r\xi)-f(t,r\eta)\right)g(t,r\xi)\widetilde{Q}_k(\langle \xi,\eta\rangle)d_{S^2(\xi)}\right\|_{L^2(S^2(\eta))}
\\ \leq &C2^{-k}\|\Omega f\|_{L^{\infty}(S^2)}\|g\|_{L^2(S^2)}.
\end{align*}
\end{lemma}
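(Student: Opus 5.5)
The first step is the identity. By the addition theorem in the form \eqref{ALP_def}, for $k\geq1$ the operator $R_k$ acts on each sphere $\{|x|=r\}$ as an integral operator with kernel $\widetilde{Q}_k(\langle\xi,\eta\rangle)$:
\[
R_k(fg)(r\eta)-f(r\eta)R_kg(r\eta)=\int_{S^2}\bigl(f(t,r\xi)-f(t,r\eta)\bigr)g(t,r\xi)\widetilde{Q}_k(\langle \xi,\eta\rangle)d_{S^2(\xi)}.
\]
Taking the $L^2(S^2(\eta))$ norm gives the stated equality. All remaining work is on the inequality, and I would treat it as a weighted Schur test for the kernel $K(\xi,\eta)=(f(\xi)-f(\eta))\widetilde{Q}_k(\langle\xi,\eta\rangle)$.

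Let $\gamma(\xi,\eta)$ be the geodesic distance on $S^2$. Integrating $\Omega f$ along the great circle from $\eta$ to $\xi$ gives
\[
|f(\xi)-f(\eta)|\leq C\gamma(\xi,\eta)\|\Omega f\|_{L^\infty(S^2)}.
\]
By symmetry of the kernel and Schur's lemma, it is then enough to prove
\begin{equation*}
\sup_{\eta\in S^2}\int_{S^2}\gamma(\xi,\eta)\bigl|\widetilde{Q}_k(\langle\xi,\eta\rangle)\bigr|d_{S^2(\xi)}\leq C2^{-k}.
\end{equation*}
In polar coordinates centred at $\eta$ this reads $2\pi\int_0^\pi\theta\,|\widetilde{Q}_k(\cos\theta)|\sin\theta\,d\theta\leq C2^{-k}$.

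To estimate $\widetilde{Q}_k$, I would use the telescoping identity $(2l+1)Q_l=Q_{l+1}'-Q_{l-1}'$. With $N=2^k$, it gives
\[
4\pi\widetilde{Q}_k=Q_N'+Q_{N-1}'-Q_{N/2}'-Q_{N/2-1}'.
\]
I would then combine two bounds. The trivial bound is $|\widetilde{Q}_k|\leq CN^2$. The classical Stieltjes-type derivative bounds are $|Q_l'(\cos\theta)|\leq Cl^{1/2}(\sin\theta)^{-3/2}$, and near $\theta=\pi$ I would use the parity $Q_l(-z)=(-1)^lQ_l(z)$. Splitting the $\theta$-integral at $\theta\sim N^{-1}$, the region $\theta\lesssim N^{-1}$ contributes about $N^2\cdot N^{-3}=N^{-1}$, which is consistent with the target.

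The main obstacle is the region $\theta\gtrsim N^{-1}$. The absolute-value bound $N^{1/2}\theta^{-3/2}$ there is too weak: it produces growth in $N$ rather than $N^{-1}$. So the plan cannot rely on absolute values alone; it has to exploit oscillation. I would write $Q_l'$ through the Christoffel–Darboux form $(1-z^2)Q_l'=l(Q_{l-1}-zQ_l)$. Then $\widetilde{Q}_k$ becomes $(\sin\theta)^{-2}$ times a combination of $lQ_l$ differences, and the Hilb/Darboux asymptotics
\[
Q_l(\cos\theta)\approx\sqrt{\frac{2}{\pi l\sin\theta}}\cos\Bigl((l+\tfrac12)\theta-\tfrac{\pi}{4}\Bigr)
\]
exhibit the differences as oscillating sums. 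I would pair this oscillation with the smoothness of $\xi\mapsto f(\xi)-f(\eta)$ by integrating by parts once in $\theta$. This moves a derivative onto $f(\xi)-f(\eta)$, costing $\|\Omega f\|_{L^\infty}$ only, and gains a factor $N^{-1}$ from the phase.

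The difficulty is that the same integration by parts also moves a derivative onto $g$, which the statement does not allow. My first attempt would be to avoid this by applying the integration by parts to the dual pairing $\langle R_k(fg)-fR_kg,h\rangle$ and symmetrising in $\xi$ and $\eta$, in the way the paper does for $IV_{2,1,2}^{\alpha=0}$. If that symmetrisation still leaves derivatives on $g$ or $h$, this step is where the proof could fail.
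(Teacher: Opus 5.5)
Your first step (the kernel identity from \eqref{ALP_def}), the Lipschitz bound $|f(\xi)-f(\eta)|\leq C\gamma(\xi,\eta)\|\Omega f\|_{L^\infty(S^2)}$ and the Schur reduction are all fine. Your diagnosis is also right: for the sharp band kernel $\widetilde Q_k$, the Schur integral $\sup_\eta\int_{S^2}\gamma(\xi,\eta)|\widetilde Q_k(\langle\xi,\eta\rangle)|\,d_{S^2(\xi)}$ is genuinely of order $2^{k/2}$, not $2^{-k}$.

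However, the proposal never proves the inequality. The step you leave open cannot be closed by oscillation, by integration by parts, or by $\xi\leftrightarrow\eta$ symmetrisation, because for $R_k$ defined by the sharp truncation \eqref{Rk_def} the inequality is false. Multiplication by a degree-one harmonic shifts degrees by one, so a $g$ sitting at the top degree $2^k-1$ of the band is pushed partly out of it.

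Concretely, in the coordinates \eqref{polar} take $f=\xi^2=\sin\theta\cos\phi$, so that $\|\Omega f\|_{L^\infty(S^2)}\leq C$. Take $g=\mathrm{Re}\,(\xi^2+i\xi^3)^L=\sin^L\theta\cos(L\phi)$ with $L=2^k-1$. Since $(x^2+ix^3)^n$ is a harmonic polynomial, $g$ is a spherical harmonic of degree $2^k-1$, so $R_kg=g$. Moreover
\[
fg=\tfrac12\sin^{L+1}\theta\cos((L+1)\phi)+\tfrac12\sin^{L+1}\theta\cos((L-1)\phi).
\]
The first term is a harmonic of degree $2^k$, so $I-R_k$ leaves it unchanged. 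It is orthogonal to $(I-R_k)$ applied to the second term, because $R_k$ commutes with $\partial_\phi$ and the second term only has azimuthal frequencies $\pm(L-1)$. Hence
\[
\|R_k(fg)-fR_kg\|_{L^2(S^2)}=\|(I-R_k)(fg)\|_{L^2(S^2)}\geq\tfrac12\|\sin^{L+1}\theta\cos((L+1)\phi)\|_{L^2(S^2)}=\tfrac12\sqrt{\tfrac{2L+2}{2L+3}}\,\|g\|_{L^2(S^2)}.
\]
The right-hand side is at least $\frac25\|g\|_{L^2(S^2)}$ for every $k$. This contradicts the claimed bound $C2^{-k}\|\Omega f\|_{L^\infty(S^2)}\|g\|_{L^2(S^2)}$ for large $k$.

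The paper's proof is only a citation to \cite{ZhangZhou} and \cite{Guo}, so there is no argument there that you missed. The statement as written, with the sharp $\widetilde Q_k$, cannot be proved by any method.

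What is true, and what your Schur argument proves immediately, is the version with a smooth spectral cutoff. Replace $R_k$ by the operator with kernel
\[
K_k(\langle\xi,\eta\rangle)=\sum_l\chi(l/2^k)\tfrac{2l+1}{4\pi}Q_l(\langle\xi,\eta\rangle)
\]
for a fixed $\chi\in C_c^\infty((0,\infty))$. Such kernels satisfy the needlet-type localization bound $|K_k(\cos\theta)|\leq C_M2^{2k}(1+2^k\theta)^{-M}$ on all of $[0,\pi]$. Therefore $2\pi\int_0^\pi\theta|K_k(\cos\theta)|\sin\theta\,d\theta\leq C2^{-k}$, and no oscillation analysis is needed.

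Using the lemma in the paper would therefore require redefining $R_k$ with a smooth cutoff. The exact orthogonality statements such as Lemma \ref{ALP_ortho} would then have to be adjusted to almost-orthogonality.
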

\begin{proof}
This lemma follows directly from the proof of Lemma 2.4 in Zhang and Zhou \cite{ZhangZhou}; see also Appendix A.1 in \cite{Guo}.
\end{proof}

\end{appendix}

\end{document}